\newcommand{\al}{\alpha}
\newcommand{\be}{\beta}
\newcommand{\ga}{\gamma}
\newcommand{\la}{\lambda}
\newcommand{\ka}{\kappa}
\newcommand{\eps}{\epsilon}
\DeclareMathOperator{\PSL}{PSL} \DeclareMathOperator{\PGL}{PGL}
\DeclareMathOperator{\SL}{SL} 
\DeclareMathOperator{\tr}{tr}
\newcommand{\Tr}{\mathcal{T}}
\newcommand{\Pri}{\mathcal{P}}
\newcommand{\cC}{\mathcal{C}}
\newcommand{\BF}{\mathbb{F}}
\theoremstyle{plain}
\newtheorem{prop}{Proposition}
\newtheorem{lemma}[prop]{Lemma}
\newtheorem{cor}[prop]{Corollary}
\newtheorem{thm}[prop]{Theorem}
\theoremstyle{definition}
\newtheorem{defn}{Definition}
\newtheorem{example}[prop]{Example}
\newtheorem{rem}[prop]{Remark}
\title{Expansion of conjugacy classes in $\PSL_2(q)$}
\author{Shelly Garion}
\thanks{The author was supported by the SFB 878 ``Groups, Geometry and Actions''.}
\address{Fachbereich Mathematik und Informatik, Universit\"at
M\"unster, Einsteinstrasse 62, D-48149 M\"unster, Germany}
\email{shelly.garion@uni-muenster.de}
\subjclass[2000]{20D06}
\begin{document}

\maketitle

\begin{abstract}
For any conjugacy class $\cC$ in $G=\PSL_2(q)$ we compute $\cC^2$
and discuss whether $\cC$ contains a triple of elements whose
product is $1$ which generate $G$. Moreover, we determine which
elements in $G$ can be written as a product of two conjugate
elements that generate $G$.
\end{abstract}


\section{Introduction}
There is a great interest in expansion properties of conjugacy
classes in finite simple (non-abelian) groups. Thompson conjectured
that every finite simple group $G$ has a conjugacy class $\cC$ such
that $\cC^2 = G$. This conjecture was verified for many families of
finite simple groups, including the alternating groups and the
sporadic groups, and Ellers and Gordeev~\cite{EG} proved this
conjecture for all finite simple groups of Lie type defined over
fields with more than $8$ elements, but nevertheless it is still
very much open today. On the other hand, the expansion of small
enough conjugacy classes $\cC$ in sufficiently large finite simple
groups, has been investigated by Schul~\cite{Sch}.
Moreover, Guralnick and Malle~\cite{GM} showed that every finite
simple group $G$ has a conjugacy class $\cC$ that contains a triple
of elements which have product $1$ and generate $G$.

\medskip

In this paper we consider \emph{all} the conjugacy classes in the
group $G=\PSL_2(q)$, and extend previous results of Guralnick and
Malle~\cite[Lemma 3.14, Lemma 3.15 and Theorem 7.1]{GM}.

\begin{thm}\label{thm.generate}
Let $G=\PSL_2(q)$ where $q>3$ and let $\cC$ be a non-trivial
$G$-conjugacy class. Then $\cC$ contains elements $x,y$ such that
$\langle x,y \rangle = G$, with only the following exceptions:
\begin{itemize}
\item $\cC$ is a conjugacy class of an element of order $2$;
\item $q=9$ and $\cC$ is a conjugacy class of a unipotent element (of order
$3$).
\end{itemize}
In addition, $\cC$ contains three elements $x,y,z$ such that $xyz=1$
and $\langle x,y \rangle = G$ if and only if one of the following
holds:
\begin{itemize}
\item $\cC$ is a conjugacy class of a semisimple element of a
\emph{$q$-minimal} order greater than $3$;
\item $q>3$ is prime and $\cC$ is a conjugacy class of a unipotent
element.
\end{itemize}
\end{thm}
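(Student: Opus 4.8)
The proof combines three classical ingredients: Dickson's classification of the subgroups of $G=\PSL_2(q)$; the ordinary character table of $\PSL_2(q)$ (the principal and discrete series, the Steinberg character, and the few characters of small degree); and the Frobenius class-algebra formula, by which the number of triples $(x,y,z)\in\cC^{3}$ with $xyz=1$ equals $\frac{|\cC|^{3}}{|G|}\sum_{\chi\in\mathrm{Irr}(G)}\frac{\chi(c)^{3}}{\chi(1)}$ for $c\in\cC$, and from which one also reads off $\cC^{2}$. The first preparatory step is to evaluate these character sums for every non-trivial $\cC$ --- an elementary but slightly delicate sum of roots of unity --- which pins down exactly which classes satisfy $1\in\cC^{3}$.

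For the first assertion I would begin by isolating the two genuine exceptions. If $\cC$ is a class of involutions then $\langle x,x^{g}\rangle$ is always dihedral, hence proper since $G$ is not dihedral for $q>3$. If $q=9$ then $G\cong A_6$, the two unipotent classes are interchanged by the diagonal automorphism, and the elements of one of them are $3$-cycles, two of which can never generate $A_6$ for support reasons; so neither unipotent class has two generating conjugates. For every other non-trivial $\cC$, put $n:=|c|>2$ and list the maximal subgroups $M\le G$ containing an element of order $n$: a Borel subgroup, a torus normaliser $N_G(T)$ for one of the two tori, one of $A_4,S_4,A_5$, or a subfield subgroup $\PSL_2(q_0)$ or $\PGL_2(q_0)$. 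Fixing $x\in\cC$, one bounds the number of $g\in G$ for which $\langle x,x^{g}\rangle\le M$ for some such $M$: $x$ lies in only boundedly many conjugates of each $M$, and for each of these the set of $g$ with $x^{g}\in M$ has size $|\cC\cap M|\,|C_G(x)|$, so a short computation shows that this total is smaller than $|G|$, whence a good $g$ exists. The finitely many small $q$, where the estimate is not yet decisive, are checked directly; this fills in the cases not already contained in \cite[Lemma 3.14, Lemma 3.15]{GM}.

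For the second assertion I record the obstructions to a generating triple inside a class with $1\in\cC^{3}$. If $|c|=2$ then $xyz=1$ with $x,y,z$ involutions forces $yx=xy$, so $\langle x,y\rangle\cong(\bZ/2\bZ)^{2}$ is proper. If $|c|=3$ then $\langle x,y\rangle$ is a quotient of the Euclidean triangle group $\langle a,b\mid a^{3}=b^{3}=(ab)^{3}=1\rangle\cong\bZ^{2}\rtimes\bZ/3\bZ$, hence metabelian, and so $\neq G$ for $q>3$. If $\cC$ is semisimple of order $d$ and $d$ is not $q$-minimal then an element of order $d$ already lies in a proper subfield subgroup, and comparing structure constants (morally: a triple of semisimple classes with product $1$ is rigid, hence, up to conjugacy, defined over the subfield) shows that every $(x,y,z)\in\cC^{3}$ with $xyz=1$ lies in a conjugate of $\PSL_2(q_0)$ or $\PGL_2(q_0)$; likewise, if $\cC$ is unipotent and $q=p^{f}$ with $f>1$, the rigid unipotent triple forces $\langle x,y\rangle$ into a conjugate of $\PSL_2(p)$. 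In the remaining cases --- $\cC$ semisimple of \emph{$q$-minimal} order $>3$, or $q$ prime and $\cC$ unipotent --- I would run an inclusion--exclusion: the number of generating triples in $\cC^{3}$ is at least the number of triples in $\cC^{3}$ with product $1$, minus $\sum_{M}[G:N_G(M)]$ times the number of triples in $(\cC\cap M)^{3}$ with product $1$, the sum taken over representatives $M$ of the maximal subgroups meeting $\cC$. Since $d$ is $q$-minimal there are no subfield contributions, so the subtracted terms come only from torus normalisers and from $A_4,S_4,A_5$ and are $O(q^{2})$, while the main term grows like a positive multiple of $q^{3}$; this gives a positive count for all large $q$, and the finitely many remaining groups are settled by direct checks.

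The principal difficulty will be the accounting in this last inclusion--exclusion: one must determine precisely which maximal subgroups $M$ meet a given class $\cC$ and how $\cC\cap M$ decomposes into $M$-classes --- the dihedral case, where $\cC\cap N_G(T)$ may be empty, a single class, or two classes, and the product-$1$ count inside a dihedral group must be handled by hand, is the fussiest --- and one must arrange for the hypothesis ``$q$-minimal order'' to be exactly what kills the subfield-subgroup terms, so matching the definition of $q$-minimal to the case division is really the crux. Subsidiary points are the accurate evaluation of $\sum_{\chi}\chi(c)^{3}/\chi(1)$ for the borderline classes of small size and the explicit verification (e.g.\ by machine) of the few smallest groups $\PSL_2(q)$, where the asymptotic estimates do not yet apply.
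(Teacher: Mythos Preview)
Your outline is sound and would lead to a correct proof, but the paper proceeds by a quite different and more direct method. Instead of the Frobenius class formula and inclusion--exclusion over maximal subgroups, the paper's engine is Macbeath's pair of theorems on trace triples in $\SL_2(q)$: for any $(\alpha,\beta,\gamma)\in\BF_q^3$ there exist $A,B,C\in\SL_2(q)$ with $ABC=I$ and the prescribed traces, and the images of $A,B$ generate a \emph{structural} subgroup (i.e.\ a subgroup of a Borel or of a torus) if and only if $(\alpha,\beta,\gamma)$ lies on the singular quadric $\alpha^2+\beta^2+\gamma^2-\alpha\beta\gamma-4=0$. With this in hand, the paper simply chooses a trace $\gamma$ (for the first assertion) or checks that $(\alpha,\alpha,\alpha)$ is non-singular (for the second), so that the triple is guaranteed not to sit in a structural subgroup; ruling out the small subgroups $A_4,S_4,A_5$ and subfield subgroups is then a one-line inspection of orders, since $q$-minimality of $|s|$ is precisely the statement that $\alpha$ lies in no proper subfield. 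No character sums are computed and no asymptotic counting is needed; the only residual case analysis is a short direct check at $q\in\{5,7,9\}$.

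What each approach buys: your counting template is robust and transplants to other families of simple groups where no analogue of Macbeath's trace parametrisation exists, but it requires evaluating the cubic character sum, tracking how $\cC$ meets each maximal $M$ and splits into $M$-classes (your own remark about the dihedral case is well taken), and a separate machine check for the small $q$ where the inequality is not yet decisive. The paper's route is specific to $\PSL_2$ but is essentially constructive (explicit conjugating matrices are exhibited for the unipotent triple when $q$ is prime), uniform in $q$, and bypasses both the character table and the inclusion--exclusion bookkeeping. In particular, the step you flag as ``morally rigidity'' is, in the paper, simply the observation that the trace triple already lives over the subfield, so the generated group does too (this is Macbeath's setup together with results of Langer--Rosenberger), which is cleaner than comparing structure constants in $G$ and in $\PSL_2(q_0)$.
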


\begin{defn}\label{defn.q.min.order}
Let $q=p^e$ be a prime power and let $n>1$ be an integer. Then $n$
is called a \emph{$q$-minimal} order if $$e=\min \bigl\{f>0 \,:\,
p^f \equiv \pm 1 \bmod{(\gcd(2,n)\cdot n)}\bigr\}.$$ (Namely,
$\PSL_2(p^e)$ contains an element of order $n$, but no
$\PSL_2(p^f)$, $f<e$ contains such an element.)
\end{defn}

Note that similar results appear in~\cite{LR1,LR2,Mac} regarding
generation properties of $\PSL_2(q)$ by a triple of elements
$(x,y,z)$ with product $1$ of prescribed orders (see
also~\cite{Ga,Mar}).

\begin{thm}\label{thm.PSL.even}
Let $G=\PSL_2(q)$ where $q>2$ is even.

Let $\cC$ be the $G$-conjugacy class of an element $x$.
\begin{enumerate}
\item If $x$ is a semisimple element whose order divides $q-1$ then
$\cC^2=G$.
\item If $x$ is a semisimple element whose order divides
$q+1$ then $\cC^2=G \setminus \{unipotents\}$.
\item If $x$ is a unipotent element then $\cC^2=G$.
\end{enumerate}
In addition, only a semisimple element in $G$ can be written as a
product of two $G$-conjugate (semisimple) elements that generate
$G$.
\end{thm}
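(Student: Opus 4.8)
The plan is to convert every membership question ``is $g\in\cC^2$?'' into a positivity statement about a class-algebra constant and to read it off the character table of $G=\PSL_2(q)=\SL_2(q)$. Note first that $q$ even gives $\gcd(2,q-1)=1$, so $\PSL_2(q)=\SL_2(q)=\PGL_2(q)$, and every class is real because $x^{-1}=\tr(x)I-x$ forces $\tr(x^{-1})=\tr(x)$; in particular $x^{-1}\in\cC$, so $1\in\cC^2$ for every nontrivial $\cC$. For a fixed $g$ and a class $\cC$ with representative $x$, the number of ordered pairs $(a,b)\in\cC\times\cC$ with $ab=g$ equals $\frac{|\cC|^2}{|G|}\sum_\chi \chi(x)^2\,\overline{\chi(g)}/\chi(1)$, and $g\in\cC^2$ exactly when this is positive. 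I would peel off the trivial-character term $|\cC|^2/|G|$ and compute the rest from the standard irreducibles: the trivial character, the Steinberg character of degree $q$, the $(q-2)/2$ principal-series characters of degree $q+1$, and the $q/2$ discrete-series characters of degree $q-1$.

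The decisive feature is that the principal-series characters vanish on the non-split classes (order dividing $q+1$) while the discrete-series characters vanish on the split classes (order dividing $q-1$). Thus when $x$ is split only the trivial, Steinberg and principal-series terms survive, and when $x$ is non-split only the trivial, Steinberg and discrete-series terms survive. Writing $\chi(x)^2=(\zeta^{ik}+\zeta^{-ik})^2$ with $\zeta$ a primitive $(q-1)$-th root of unity in the split case, and $(\xi^{jk}+\xi^{-jk})^2$ with $\xi$ of order $q+1$ in the non-split case, the sum over the indexing characters collapses via $\sum_t\zeta^{ta}=-1$ for $a\not\equiv 0$ to a short count of which exponents $2k\pm\ell$ vanish modulo the relevant torus order. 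I expect every resulting constant to be strictly positive --- giving $\cC^2=G$ in (1) and (3) and all non-unipotent targets in (2) --- with the single exception of $\cC$ non-split and $g$ unipotent: there the discrete-series contribution is exactly $-\frac{1}{q-1}\sum_j(\xi^{jk}+\xi^{-jk})^2=-1$, cancelling the trivial-character term $+1$ and giving constant $0$. This exact cancellation is the crux that removes the unipotent class in part (2).

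For the closing assertion I would argue geometrically on the projective line. Suppose $g=ab$ with $a,b$ conjugate semisimple and $\langle a,b\rangle=G$; I must exclude $g=1$ and $g$ unipotent. If $ab=1$ then $b=a^{-1}$ and $\langle a,b\rangle=\langle a\rangle$ is cyclic, not $G$. If $a,b$ are non-split and $g$ is unipotent, part (2) already shows no such product exists. In the remaining case $a,b$ are split and $g=ab$ is unipotent; diagonalizing $a=\mathrm{diag}(\la,\la^{-1})$ with $\la\ne 1$ and imposing $\tr(ab)=0$, $\tr(b)=\la+\la^{-1}$ and $\det b=1$ forces, in characteristic $2$, the product of the two off-diagonal entries of $b$ to vanish, so $a$ and $b$ share an eigenvector. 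Then $\langle a,b\rangle$ fixes a point of $\mathbb P^1(\BF_q)$, lies in a Borel subgroup, and cannot be $G$. Hence any element realised as a product of two conjugate generating semisimple elements is itself semisimple.

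The main obstacle is the bookkeeping in the second paragraph. The ``modulo inversion'' indexing of the principal and discrete series introduces factors of $\tfrac12$ and the restricted ranges $1\le k,\ell\le(q-2)/2$ or $\le q/2$, and one must track precisely when $2k\pm\ell\equiv 0$ modulo $q-1$ or $q+1$, since each such coincidence shifts a constant by a term of order $q$. Ensuring that the one genuinely vanishing case lands exactly at $0$, rather than at a small nonzero value, is what pins down the precise shape $G\setminus\{\text{unipotents}\}$ in (2), so that cancellation warrants the most careful verification.
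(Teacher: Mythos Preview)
Your approach is sound and genuinely different from the paper's. For parts (1)--(3) the paper never touches character theory: it uses Macbeath's theorem that every trace triple $(\al,\be,\ga)\in\BF_q^3$ is realised by some $(A,B,C)$ with $ABC=I$, combines this with the fact that semisimple conjugacy classes in $\PSL_2(q)$ are determined by trace (Lemma~\ref{lem.conj.tr}), and then exhibits explicit $2\times 2$ products to show that split-semisimple and unipotent $\cC$ hit the unipotent class. The negative direction in (2) comes from Macbeath's singularity criterion: in characteristic $2$ the triple $(\al,\al,0)$ is always singular (since $2\al^2-4=0$), so any conjugate pair with unipotent product generates a structural subgroup, which cannot simultaneously contain a non-split semisimple element and a unipotent. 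That same singularity computation also disposes of the generation statement in one stroke, uniformly over all conjugate pairs, whereas you split into the three cases $ab=1$, $a,b$ non-split, and $a,b$ split.

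What your route buys is a single uniform machine (the Frobenius formula), with the exclusion in (2) appearing as a visible exact cancellation $1+0+0-\frac{1}{q-1}\sum_j(\xi^{jk}+\xi^{-jk})^2=1-1=0$ rather than as a subgroup-containment contradiction; the paper's route buys elementarity (no representation theory) and a cleaner, case-free treatment of the generation clause. Your direct argument in the split case---diagonalising $a$ and deducing $st=0$ from $\tr(ab)=0$, $\tr(b)=\la+\la^{-1}$, $\det b=1$ in characteristic $2$---is correct and pleasantly concrete, but note that the paper's one-line singularity check covers split, non-split and unipotent pairs simultaneously. Finally, the paper's Proposition~\ref{prop.gen.ss.ss} actually establishes the converse as well (every semisimple element \emph{is} such a product), which goes slightly beyond the literal ``only'' in the theorem; your sketch addresses precisely what is stated.
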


\begin{thm}\label{thm.PSL.odd}
Let $G=\PSL_2(q)$ where $q>3$ is odd.

Let $\cC$ be the $G$-conjugacy class of an element $x$.
\begin{enumerate}
\item If $x$ is a semisimple element whose order is greater than $2$ then $\cC^2=G$.

\item If $x$ is an element of order $2$ then
$\cC^2=\begin{cases}
G & if \ q \equiv 1 \bmod 4 \\
G\setminus \{unipotents\} & if \ q \equiv 3 \bmod 4
\end{cases}.$

\item If $x$ is a unipotent element then
\begin{equation*}
\cC^2=\begin{cases}
\{unipotents\} \cup \{semisimples\ of\ q-good\ orders\} \cup \{1\} & if \ q \equiv 1 \bmod 4 \\
\{unipotents\} \cup \{semisimples\ of\ q-good\ orders\} & if \ q
\equiv 3 \bmod 4
\end{cases}.
\end{equation*}
\end{enumerate}
In addition, any non-trivial element in $G$ can be written as a
product of two $G$-conjugate semisimple elements that generate $G$.
And moreover, a semisimple element can be written as a product of
two $G$-conjugate unipotents that generate $G$ if and only if its
order is \emph{$q$-minimal} and \emph{$q$-good}, where $q \neq 9$.
\end{thm}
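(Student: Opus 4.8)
The plan is to reduce every assertion to trace computations in $\SL_2(q)$ and to apply the classical description, going back to Fricke and Macbeath, of pairs of elements of $\SL_2$ with prescribed traces. First I would fix lifts: up to conjugacy in $\PGL_2(q)$ a non-trivial class of $G$ is pinned down by the trace $t$ of a lift to $\SL_2(q)$, well defined up to sign, where the split semisimple, non-split semisimple, involution and unipotent classes correspond respectively to $t=\la+\la^{-1}$ with $\la\in\BF_q^{\times}$, to $t=\la+\la^{-1}$ with $\la^{q+1}=1$, to $t=0$, and to $t=\pm2$; within $G$-conjugacy the unipotent value $t=\pm2$ splits further according to a class in $\BF_q^{\times}/(\BF_q^{\times})^2$. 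Computing $\cC^2$ then amounts to determining, for all admissible choices of sign, which traces $\ga=\tr(AB)$ occur for pairs $A,B\in\SL_2(q)$ with $\tr A,\tr B\in\{t,-t\}$, and then reading off which $G$-classes those traces (and square-classes) represent.

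The engine is the fact that for any $\al,\be,\ga\in\BF_q$ there is a pair $A,B\in\SL_2(q)$ with $(\tr A,\tr B,\tr AB)=(\al,\be,\ga)$; if the triple is \emph{non-singular}, i.e. $\ka:=\al^2+\be^2+\ga^2-\al\be\ga-4\neq0$, this pair is unique up to $\PGL_2(q)$-conjugacy and generates an absolutely irreducible subgroup, whereas if $\ka=0$ every such pair is reducible, so $A$ and $B$ lie in a common Borel --- or, when their eigenvalues are irrational, a common torus --- and $AB$ is forced to be unipotent or central. I would use this for parts (1)--(3): as $\ga$ runs over the non-singular values it contributes the images of all elements of trace $\pm\ga$, while the singular locus, a union of one or two parabolas in $\ga$, contributes exactly the unipotent elements and, precisely when $AB=1$ is attainable with $A,B$ conjugate, the identity. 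The case split on $q\bmod4$ is then just the question whether $-1\in(\BF_q^{\times})^2$ --- which governs both whether $1\in\cC^2$ for $\cC$ unipotent and whether the non-split involution is a product of two conjugates --- and the restriction to ``$q$-good orders'' in part (3) is exactly the square-class condition, that $2-\ga$ or $2+\ga$ be a non-zero square, under which the unique unipotent pair with product-trace $\ga$ can be realised with both factors in the same $G$-class; verifying that this condition depends only on the order of the target, and matches the notion of $q$-good order, is one of the computational cores.

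For the two ``generation'' assertions I would combine the above with Dickson's classification of the subgroups of $\PSL_2(q)$. Given a target $g$ whose lifts to $\SL_2(q)$ have trace $\pm\ga$, to write $g$ as a product of two $G$-conjugate \emph{semisimple} generators I pick a semisimple trace $\al$ coming from an element of $q$-minimal order --- so that $\BF_p(\al)=\BF_q$ --- taken of order large enough (possible once $q$ is not too small) to avoid $A_4$, $S_4$ and $A_5$, and with $(\al,\al,\ga)$, or a suitable $\pm$-variant, non-singular and $\ga$ distinct from the two ``torus values'' of $\al$; then $H=\langle A,B\rangle$ is absolutely irreducible (hence not inside a Borel), not inside a dihedral normaliser of a torus (two $G$-conjugate elements of order $>2$ inside one would lie in its cyclic part, forcing $\ga$ to be a torus value), not inside $A_4$, $S_4$ or $A_5$ (by the order of $\al$), and not inside any $\PSL_2(q_0)$ or $\PGL_2(q_0)$ with $q_0<q$ (by $q$-minimality), so $H=G$; the remaining small $q$ are settled directly. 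For the last ``if and only if'', the ``only if'' part follows from part (3) together with the observation that the unique pair of unipotents with a given product-trace $\ga$ is defined over $\BF_p(\ga)$, so if the order of $g$ is not $q$-minimal that pair already generates a proper subfield subgroup; the ``if'' part repeats the maximal-subgroup elimination with $A,B$ unipotent --- which automatically excludes tori and dihedral subgroups and, once $p\notin\{3,5\}$, the exceptional subgroups --- and again leaves a finite check.

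I expect that finite check, and the small-$q$ range of the semisimple assertion, to be where the genuine care is needed. The exceptional isomorphisms $\PSL_2(3)\cong A_4$, $\PGL_2(3)\cong S_4$, $\PSL_2(5)\cong A_5$ and $\PSL_2(9)\cong A_6$ make several of the ``exceptional'' subgroups coincide with subfield subgroups and create extra fusions --- for instance in $\PSL_2(9)=A_6$ a pair of conjugate unipotents whose product has order $5$, an order that is both $q$-minimal and $q$-good, generates only an $A_5$ --- and it is precisely these coincidences that force the hypothesis $q\neq9$ and that have to be worked through class by class for the smallest values of $q$.
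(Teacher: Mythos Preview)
Your proposal is essentially the paper's own approach: both rest on Macbeath's realisation of arbitrary trace triples and his singularity criterion (the paper's Theorems~\ref{thm.Mac} and~\ref{thm.Mac.singular}), the square-class characterisation of $q$-good orders (Proposition~\ref{prop.good.G.order} and Proposition~\ref{prop.G0.unip.triple}), and Dickson's subgroup list to rule out proper subgroups for the generation statements, with the same ad hoc treatment of small $q$ and the $A_5\subset\PSL_2(9)$ obstruction. The one step you pass over that the paper makes explicit is why, for \emph{semisimple} $\cC$, hitting one unipotent trace suffices to capture \emph{both} unipotent $G$-classes --- the paper handles this via conjugation by an element of $\PGL_2(q)\setminus G$ that fixes every semisimple class but swaps the two unipotent ones (Lemma~\ref{lem.conj.tr} and \S\ref{sect.unip.elm}); your phrase ``reading off which $G$-classes those traces (and square-classes) represent'' acknowledges the issue without resolving it.
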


\begin{defn}\label{defn.G.good.order}
Let $q=p^e$ be a prime power and let $n>1$ be an integer. Then $n$
is called a \emph{$q$-good} order if one of the following holds:
\begin{itemize}
\item $n$ is odd and divides either $q-1$ or $q+1$;
\item $n$ is even and $4n$ divides either $q-1$ or $q+1$.
\end{itemize}
\end{defn}

\begin{cor}\label{cor.all.sizes}
Let $G=\PSL_2(q)$ and let $\cC$ be the $G$-conjugacy class of an
element $x$.
\begin{enumerate}
\item If $q$ is even then
\[
\frac{|\cC|}{|G|^{2/3}} \rightarrow 1 \ and \ \frac{|\cC^2|}{|G|}
\rightarrow 1 \ as \ q \rightarrow \infty.
\]
\item If $q$ is odd and $x$ is a semisimple element whose order is greater than
$2$ then
\[
\frac{|\cC|}{|G|^{2/3}} \rightarrow \sqrt[3]{4} \ and \
\frac{|\cC^2|}{|G|} \rightarrow 1 \ as \ q \rightarrow \infty.
\]
\item If $q$ is odd and $x$ is an element of order $2$ then
\[
\frac{|\cC|}{|G|^{2/3}} \rightarrow \frac{1}{\sqrt[3]{2}} \ and \
\frac{|\cC^2|}{|G|} \rightarrow 1 \ as \ q \rightarrow \infty.
\]
\item If $q$ is odd and $x$ is a unipotent element then
\[
\frac{|\cC|}{|G|^{2/3}} \rightarrow \frac{1}{\sqrt[3]{2}} \ and \
\frac{|\cC^2|}{|G|} \rightarrow \frac{3}{4} \ as \ q \rightarrow
\infty.
\]
\end{enumerate}
\end{cor}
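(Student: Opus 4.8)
The plan is to derive the corollary from two ingredients that become available once Theorems~\ref{thm.PSL.even} and~\ref{thm.PSL.odd} are proved: the explicit conjugacy class sizes in $G=\PSL_2(q)$, and the sizes of the subsets of $G$ appearing in those theorems. Throughout, $\cC$ is the class of a non-trivial element. Write $d=\gcd(2,q-1)$, so $|G|=q(q^2-1)/d$ and hence $|G|^{2/3}\sim q^2/d^{2/3}$ as $q\to\infty$. For the first limit in each part one simply reads off $|\cC|$: a non-central semisimple element of order $>2$ lies in a class of size $q(q+1)$ or $q(q-1)$, so $|\cC|\sim q^2$; an involution in $G$ with $q$ odd lies in a class of size $q(q\pm1)/2$, so $|\cC|\sim q^2/2$; and a unipotent element lies in a class of size $q^2-1$ if $q$ is even and of size $(q^2-1)/2$ if $q$ is odd. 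Dividing $|\cC|$ by $|G|^{2/3}\sim q^2/d^{2/3}$ then gives the claimed limits: $1$ in (1) (where $d=1$), $2^{2/3}=\sqrt[3]{4}$ in (2) (where $d=2$ and $|\cC|\sim q^2$), and $2^{2/3}/2=1/\sqrt[3]{2}$ in (3) and (4) (where $d=2$ and $|\cC|\sim q^2/2$).

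For the ratios $|\cC^2|/|G|$ the elementary observation is that the unipotent elements of $G$ together with $\{1\}$ number exactly $q^2$, which is $o(|G|)$. Thus in cases (1), (2) and (3) --- where Theorems~\ref{thm.PSL.even} and~\ref{thm.PSL.odd}(1)--(2) give either $\cC^2=G$ or $\cC^2=G\setminus\{\text{unipotents}\}$ --- one obtains $|\cC^2|/|G|\to 1$ immediately. So the only case demanding real work is (4): $q$ odd and $x$ unipotent. By Theorem~\ref{thm.PSL.odd}(3), $\cC^2$ coincides, up to the single element $1$, with $\{\text{unipotents}\}\cup\{\text{semisimple elements of }q\text{-good order}\}$; since the unipotents are negligible, it remains to show that there are $\sim\tfrac34|G|\sim\tfrac38 q^3$ semisimple elements of $q$-good order.

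This last count I would do over maximal tori. Each non-central semisimple element of order $\ge 3$ lies in a unique maximal torus, cyclic of order $(q-1)/2$ (split) or $(q+1)/2$ (non-split), with $q(q+1)/2$ split tori and $q(q-1)/2$ non-split ones, so $\sim q^2/2$ of each type; the involutions, being $o(|G|)$ in number, play no role. It therefore suffices to count, in a cyclic group $C_m$ with $m=(q\mp1)/2$, the elements of $q$-good order. Since $q$ is odd, exactly one of $q-1$ and $q+1$ is divisible by $4$; write it as $2^aw$ with $a\ge 2$ and $w$ odd, and the other as $2w'$ with $w'$ odd --- so one of the two torus orders $(q\mp1)/2$ equals $w'$ and is odd, while the other equals $2^{a-1}w$ and is even. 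In the odd-order torus $C_{w'}$ every non-trivial element has odd order dividing $q\pm 1$ and so is $q$-good. In the even-order torus $C_{2^{a-1}w}$, an element of order $n=2^bc$ ($c$ odd, $c\mid w$, $0\le b\le a-1$) has $q$-good order iff $4n=2^{b+2}c$ divides $q-1$ or $q+1$; since $4n$ cannot divide $2w'$, this holds iff $b\le a-2$ and fails exactly when $b=a-1$. Writing $C_{2^{a-1}w}\cong C_{2^{a-1}}\times C_w$, the elements with $b=a-1$ are precisely those whose $C_{2^{a-1}}$-component is a generator, so there are $\phi(2^{a-1})\,w=2^{a-2}w\sim q/4$ of them, i.e. about half the torus. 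Hence the $\sim q^2/2$ even-order tori together carry $\sim (q^2/2)(q/4)=q^3/8$ semisimple elements of $q$-good order, and the $\sim q^2/2$ odd-order tori carry $\sim (q^2/2)(q/2)=q^3/4$, giving a total $\sim\tfrac38 q^3\sim\tfrac34|G|$ and hence $|\cC^2|/|G|\to\tfrac34$ in case (4).

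I expect the \emph{main obstacle} to be exactly this bookkeeping: one must track carefully how the factor $4$ in the definition of a $q$-good order interacts with the $2$-adic valuations of $q-1$ and $q+1$, and check that the non-$q$-good elements of the even-order torus are precisely its ``top $2$-part'' elements, which make up (asymptotically) half of it. Everything else --- the class sizes, the order of $G$, the count of unipotents, and the statements of the three theorems --- is routine or already available.
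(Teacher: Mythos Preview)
Your argument is correct. For parts (1)--(3) and for all the first limits $|\cC|/|G|^{2/3}$, your computation is essentially identical to the paper's: one reads off $|\cC|$ from Table~\ref{table.elm.PSL} and divides by $|G|^{2/3}$.

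For the count in part~(4), however, you take a genuinely different route. The paper obtains the number of semisimple elements of non-$q$-good order from \S\ref{sect.num}: by Proposition~\ref{prop.good.G.order} an order is $q$-good precisely when one of $2\pm\gamma$ is a square, and Lemma~\ref{lem.num.tr} then counts the \emph{bad traces} directly, giving exactly $(q\mp1)/4$ of them; multiplying by the class size yields the exact value $q(q^2-1)/8=|G|/4$ of Corollary~\ref{cor.num.elm.odd}, and hence $|\cC^2|=3|G|/4-\epsilon$. Your argument instead bypasses the square criterion altogether and works straight from Definition~\ref{defn.G.good.order}: you partition the semisimple elements over maximal tori and observe, via the $2$-adic structure of $(q\mp1)/2$, that the odd-order torus contributes only $q$-good elements while in the even-order torus exactly half (the ``top $2$-part'' elements) fail to be $q$-good. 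This is more self-contained --- it does not rely on Proposition~\ref{prop.good.G.order} --- and in fact, if you keep the torus counts exact ($q(q\pm1)/2$ tori, $(q\mp1)/4$ bad elements in each even-order torus), you recover the paper's exact value $|G|/4$ rather than just the asymptotic. The paper's trace-based count, on the other hand, fits more naturally with the rest of the argument, where everything is already phrased in terms of traces and the singular/non-singular dichotomy.
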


\begin{example}
For any prime power $q < 30$ the following table presents \emph{all}
the \emph{$q$-minimal} orders, divided according to whether they are
\emph{$q$-good} or not.

\begin{table}[h]
\begin{tabular} {|c|c|c|c|}
\hline
$q$ & order of a & $q$-minimal orders &  $q$-minimal orders \\
& unipotent & which are $q$-good & which are not $q$-good \\
\hline \hline

$2$ & $2$ & $3$ & -- \\
\hline

$3$ & $3$ & -- & $2$ \\
\hline

$4$ & $2$ & $5$ & -- \\
\hline

$5$ & $5$ & $3$ & $2$ \\
\hline

$7$ & $7$ & $2,3$ & $4$\\
\hline

$8$ & $2$ & $7, 9$ & -- \\
\hline

$9$ & $3$ & $5$ & $4$\\
\hline

$11$ & $11$ & $3, 5$ &  $2, 6$\\
\hline

$13$ & $13$ & $3, 7$ & $2, 6$ \\
\hline

$16$ & $2$ & $15,17$ & -- \\
\hline

$17$ & $17$ & $2,3,4,9$ & $8$ \\
\hline

$19$ & $19$ & $3,5,9$ & $2,10$ \\
\hline

$23$ & $23$ & $2,3,6,11$ & $4,12$ \\
\hline

$25$ & $5$ & $6, 13$ & $4, 12$ \\
\hline

$27$ & $3$ & $7,13$ & $14$ \\
\hline

$29$ & $29$ & $3,5,7,15$ & $2,14$ \\
\hline
\end{tabular}
\caption{Orders of elements in $\PSL_2(q)$} \label{table.ord.PSL}
\end{table}
\end{example}

\section{Preliminaries -- basic properties of $\PSL_2(q)$}

\subsection{Elements and conjugacy classes}
(see~\cite{Di,Do,Go,Su}). Let $q = p^e$, where $p$ is a prime number
and $e \geq 1$. Recall that the order of $G=\PSL_2(q)$ is
$q(q^2-1)/d$, where
$$d=\gcd(2,q-1) = \begin{cases} 1 \ & \text{ if } q \text{ is  even} \\
2 \ & \text{ if } q \text{ is  odd}
\end{cases}.$$

One can classify the elements of $\PSL_2(q)$ according to the Jordan
form of their pre-image in $\SL_2(q)$. The following Table
\ref{table.elm.PSL} lists the three types of elements, according to
whether the characteristic polynomial $P(\la):=\la^2 - \al \la +1$
of the matrix $A \in \SL_2(q)$ (where $\al=\tr(A)$) has $0$, $1$ or
$2$ distinct roots in $\BF_q$.

We denote $\ka(\al)= \begin{cases}1 \ & \text{ if } \al \ne 0 \\
2 \ & \text{ if } \al=0 \end{cases}.$

\begin{table}[h]
\begin{tabular} {|c|c|c|c|c|}
\hline
type & roots of $P(\la)$ & Jordan form in $\SL_2(\overline{\mathbb{F}}_p)$ & order & $G$-conjugacy classes  \\
\hline \hline

unipotent & $1$ root & $\begin{pmatrix} \pm 1 & 1 \\ 0 & \pm 1 \end{pmatrix}$ & $p$ & $d$ classes in $G$ \\
 & & $\al=\pm 2$ &  & each of size $\frac{q^2-1}{d}$ \\
\hline

semisimple & $2$ roots & $\begin{pmatrix} a & 0 \\ 0 & a^{-1}\end{pmatrix}$ & divides & for each $\al$: \\
split      &           & where $a \in \mathbb{F}_q^*$ & $\frac{q-1}{d}$ &  one conjugacy class\\
      &           & and $a+a^{-1}=\al$ &  & of size $\frac{q(q+1)}{\ka(\al)}$\\
\hline

semisimple & no roots & $\begin{pmatrix} a & 0 \\ 0 & a^q\end{pmatrix}$ & divides & for each $\al$: \\
non-split  &          & where $a \in \mathbb{F}_{q^2}^* \setminus \mathbb{F}_q^*$ & $\frac{q+1}{d}$ & one conjugacy class \\
          &          & $a^{q+1}=1$ and $a + a^q = \al$ & & of size $\frac{q(q-1)}{\ka(\al)}$\\
\hline
\end{tabular}
\caption{Elements in $\PSL_2(q)$}
\label{table.elm.PSL}
\end{table}

\subsection{Subgroups}
Table~\ref{table.subgroups} specifies all the subgroups of
$G=\PSL_2(q)$ up to isomorphism following~\cite[Theorems 6.25 and
6.26]{Su}.

\begin{table}[h]
\begin{tabular} {|c|c|c|}
\hline
type &  maximal order & conditions \\
\hline \hline

$p$-group & $q$ & -- \\
\hline

Frobenius (Borel) & $q(q-1)/d$ & -- \\
\hline

cyclic (split) & $(q-1)/d$ & -- \\
\hline

dihedral (split) & $2(q-1)/d$ & -- \\
\hline

cyclic (non-split) & $(q+1)/d$ & -- \\
\hline

dihedral (non-split) & $2(q+1)/d$ & -- \\
\hline

$\PSL_2(q_1)$ & -- & $q=q_1^m$ $(m \in \mathbb{N})$\\
\hline

$\PGL_2(q_1)$ & -- & $q$ is odd, $q=q_1^{2m}$ $(m \in \mathbb{N})$ \\
\hline

$A_4$ & 12 &  $q$ is odd; or $q=2^e$, $e$ even \\
\hline

$S_4$ & 24 & $q^2 \equiv 1 \bmod {16}$ \\
\hline

$A_5$ & 60 & $p=5$ or $q^2 \equiv 1 \bmod {5}$ \\
\hline
\end{tabular}
\caption{Subgroups of $\PSL_2(q)$} \label{table.subgroups}
\end{table}

These subgroups can be divided into the following three classes,
following Macbeath~\cite{Mac}. The subgroups isomorphic to
$\PSL_2(q_1)$ or $\PGL_2(q_1)$ are usually called \emph{subfield}
subgroups (since $\mathbb{F}_{q_1}$ is a subfield of
$\mathbb{F}_q$). Since $A_4$, $S_4$, $A_5$ and dihedral groups
correspond to the finite triangle groups, that is, triangle groups
$T_{r,s,t}$ such that $1/r+1/s+1/t>1$, we will call them
\emph{small} subgroups. For convenience we will refer to the other
subgroups, namely subgroups of the Borel and cyclic subgroups, as
\emph{structural} subgroups.

\medskip
Macbeath~\cite{Mac} classified the pairs of elements in $\PSL_2(q)$
in a way which makes it easy to decide what kind of subgroup they
generate.

\begin{thm}\cite[Theorem 1]{Mac}. \label{thm.Mac}
For every $\al,\be,\ga \in \BF_q$ there exist three matrices $A,B,C
\in \SL_2(q)$ satisfying $\tr(A)=\al$, $\tr(B)=\be$, $\tr(C)=\ga$
and $ABC=I$.
\end{thm}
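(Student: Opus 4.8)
The plan is to prove Macbeath's trace theorem by an explicit parametrized construction: for any prescribed triple $(\al,\be,\ga)\in\BF_q^3$, I will write down matrices $A,B\in\SL_2(q)$ with $\tr(A)=\al$, $\tr(B)=\be$, $\tr(AB)^{-1}=\ga$ — equivalently $\tr(AB)=\ga$ since $\tr$ is invariant under inversion for $\SL_2$ — and then set $C=(AB)^{-1}$. The key algebraic fact is that for $2\times 2$ matrices $\tr(AB)+\tr(AB^{-1})=\tr(A)\tr(B)$ when $\det B=1$, so controlling $\tr(A)$, $\tr(B)$ and $\tr(AB)$ is the natural three-parameter problem. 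The cleanest route is to take $A$ and $B$ in companion/triangular forms so that the three traces become three independent linear-ish conditions on a small number of free entries.

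First I would try the ansatz
\[
A=\begin{pmatrix} \al & -1 \\ 1 & 0\end{pmatrix},\qquad
B=\begin{pmatrix} 0 & \be \\ -\be^{-1} & \be\end{pmatrix}
\]
or a similar pair, and compute $\tr(AB)$ as a function of the remaining free parameter, checking that it takes the value $\ga$ for a suitable choice. Since we are over a field, a single linear condition is always solvable; the only thing to watch is that the determinant stays $1$, which forces one nonzero parameter (hence the need to treat $\be=0$, or more precisely the vanishing locus, separately, or to pick an ansatz in which the determinant is automatically $1$ regardless). A robust choice is
\[
A=\begin{pmatrix} \al & 1 \\ -1 & 0\end{pmatrix},\qquad
B=\begin{pmatrix} s & t \\ u & \be - s\end{pmatrix}
\]
with $\det B = s(\be-s)-tu=1$; then $\tr(AB)$ is an affine function of $t,u,s$, and one solves the two equations $\tr(AB)=\ga$, $\det B=1$ by choosing, say, $u$ freely to make $\det B=1$ after fixing $t,s$ from the trace condition. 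Finally put $C=B^{-1}A^{-1}$, so $ABC=I$, $\tr(C)=\tr((AB)^{-1})=\tr(AB)=\ga$, and $\tr(A)=\al$, $\tr(B)=\be$ hold by construction.

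The main obstacle is the degenerate cases where the chosen parametrization fails — e.g. when the value of $\al$ or $\be$ forces a needed parameter to be $0$ and the matrix drops out of $\SL_2(q)$, or when $q$ is very small ($q=2,3$) and the field lacks elements to solve an auxiliary equation. I would handle this by splitting into cases according to which traces are zero: if all of $\al,\be,\ga$ are nonzero one ansatz works; if some vanish, a second explicit ansatz (for instance with unipotent-type factors, or with one factor equal to a fixed element of order dividing $q-1$) covers the remainder. A uniform alternative, avoiding casework entirely, is to argue by a counting/character estimate — the number of pairs $(A,B)\in\SL_2(q)^2$ with $\tr A=\al$, $\tr B=\be$, $\tr AB=\ga$ equals $|\SL_2(q)|$ times a non-negative class-algebra structure constant that one can compute from the character table of $\SL_2(q)$ and show is positive for every $(\al,\be,\ga)$ — but since this lemma is quoted verbatim from Macbeath, I expect the intended proof is the short explicit matrix computation, with the degenerate traces as the only real bookkeeping.
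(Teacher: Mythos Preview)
The paper does not supply its own proof of this statement; it is quoted as \cite[Theorem~1]{Mac} and used as a black box throughout. There is therefore no in-paper argument to compare against.

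On the substance of your plan: the second ansatz is sound and very close to a complete proof, though your description of the bookkeeping is slightly off. With $A=\bigl(\begin{smallmatrix}\al&1\\-1&0\end{smallmatrix}\bigr)$ and $B=\bigl(\begin{smallmatrix}s&t\\u&\be-s\end{smallmatrix}\bigr)$ one has $\tr(AB)=\al s+u-t$; the trace condition is a \emph{single} linear equation, so eliminating $u=t+\ga-\al s$ (rather than ``fixing $t,s$'') leaves the affine conic
\[
s(\be-s)-t(t+\ga-\al s)=1
\]
in the $(s,t)$-plane. For $q$ odd, the symmetric matrix of its projective closure has determinant $-\tfrac14(\al^2+\be^2+\ga^2-\al\be\ga-4)$, so the conic is smooth precisely when $(\al,\be,\ga)$ is \emph{non}-singular in the sense of Definition~\ref{defn.singular}; a smooth projective conic over $\BF_q$ has $q+1$ rational points and at most two lie at infinity, so an affine solution always exists. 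The genuine residual cases are therefore not ``which traces vanish'' but rather (i) characteristic~$2$, where the symmetric-matrix criterion is unavailable, and (ii) singular triples, where one should construct the pair directly inside a Borel or a torus---which is consistent with Theorem~\ref{thm.Mac.singular}, since that is where any solution must live anyway. Macbeath's original argument is an explicit construction with a short case split of this flavour; your character-theoretic alternative via the Frobenius formula would also work but is heavier than needed.
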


\begin{defn}\label{defn.singular}
A triple $(\al,\be,\ga) \in \BF_q^3$ is called \emph{singular} if
$\al^2+\be^2+\ga^2-\al\be\ga - 4 = 0$.
\end{defn}

\begin{thm}\cite[Theorem 2]{Mac}. \label{thm.Mac.singular}
Let $(A,B,C) \in \SL_2(q)^3$ be a triple of matrices satisfying
$\tr(A)=\al$, $\tr(B)=\be$, $\tr(C)=\ga$ and $ABC=I$. Then
$(\al,\be,\ga) \in \BF_q^3$ is \emph{singular} if and only if the
group generated by the images of $A$ and $B$ is a \emph{structural}
subgroup of $\PSL_2(q)$.
\end{thm}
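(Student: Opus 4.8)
The plan is to reduce the assertion to a piece of linear algebra on $2\times 2$ matrices via the trace form. Since $C=(AB)^{-1}$ and $\tr(M)=\tr(M^{-1})$ for $M\in\SL_2$, we have $\ga=\tr(AB)$, so $(\al,\be,\ga)$ is singular precisely when the Fricke discriminant $\ka:=\al^2+\be^2+\ga^2-\al\be\ga-4$ vanishes. The first step is to identify $\ka$ with the discriminant of the quadruple $(I,A,B,AB)$ with respect to the trace form $\langle X,Y\rangle:=\tr(XY)$ on $M_2(\BF_q)$. Using $A^2=\al A-I$ and $B^2=\be B-I$ one computes $\tr(A^2)=\al^2-2$, $\tr(A^2B)=\al\ga-\be$, $\tr(BAB)=\be\ga-\al$, $\tr((AB)^2)=\ga^2-2$, and one checks directly that the $4\times4$ Gram matrix $\bigl(\tr(X_iX_j)\bigr)$ for $(X_1,X_2,X_3,X_4)=(I,A,B,AB)$ has
\[
\det\bigl(\tr(X_iX_j)\bigr)_{1\le i,j\le 4} \;=\; -\ka .
\]
This is a polynomial identity in $\al,\be,\ga$ with integer coefficients, hence valid in every characteristic.

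The second step is to show that $(I,A,B,AB)$ is an $\BF_q$-basis of $M_2(\BF_q)$ if and only if the subalgebra $R:=\BF_q[A,B]$ generated by $A$ and $B$ is all of $M_2(\BF_q)$. The polarized Cayley--Hamilton identity $XY+YX=\tr(Y)X+\tr(X)Y+(\tr(XY)-\tr(X)\tr(Y))I$ shows that $BA$, and then inductively every word in $A$ and $B$, lies in $\operatorname{span}\{I,A,B,AB\}$; hence $R=\operatorname{span}\{I,A,B,AB\}$ unconditionally. The trace form on $M_2$ is nondegenerate (in the basis of matrix units its Gram matrix has determinant $-1$, in any characteristic), so the Gram determinant above is nonzero exactly when this span is $4$-dimensional, i.e.\ exactly when $R=M_2(\BF_q)$. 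Combining with the first step: $\ka\ne 0\iff R=M_2(\BF_q)$, equivalently (base change) $\overline{\BF}_q[A,B]=M_2(\overline{\BF}_q)$, equivalently (Burnside's theorem over the algebraically closed field $\overline{\BF}_q$) the images of $A$ and $B$ have no common eigenvector in $\overline{\BF}_q^{\,2}$.

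The third step translates ``no common eigenvector over $\overline{\BF}_q$'' into ``not a structural subgroup'' using Table~\ref{table.subgroups}. If $\langle A,B\rangle$ has a common eigenvector defined over $\BF_q$, then it fixes a point of $\mathbb{P}^1(\BF_q)$ and hence lies in a Borel subgroup. If the common eigenvector is not defined over $\BF_q$, it lies in $\mathbb{P}^1(\BF_{q^2})$, and, being fixed by the $\BF_q$-rational matrices $A$ and $B$, so is its Frobenius conjugate; thus $\langle A,B\rangle$ fixes two distinct points and lies in a split or non-split maximal torus. In every case $\langle A,B\rangle$ is contained in a Borel or a cyclic subgroup, i.e.\ is structural. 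Conversely, a structural subgroup manifestly has a common eigenvector over $\overline{\BF}_q$, whereas each remaining subgroup type in Table~\ref{table.subgroups} --- dihedral (normalizer of a torus), subfield, $A_4$, $S_4$, $A_5$ --- acts irreducibly on the natural module and therefore has none. This yields ``$\ka=0\iff\langle A,B\rangle$ is structural'', which together with the first paragraph is precisely the statement.

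The routine parts are the $4\times4$ determinant computation and the word-reduction identities. The one point requiring care is the last paragraph: one must be certain that an irreducible subgroup can never be ``structural'', i.e.\ that among the subgroups of Table~\ref{table.subgroups} the only ones fixing a point of $\mathbb{P}^1(\overline{\BF}_q)$ are the structural ones. This is exactly where the non-split torus case and the Frobenius-conjugation argument are needed.
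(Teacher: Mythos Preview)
The paper does not give its own proof of this theorem; it is quoted from Macbeath~\cite{Mac} and used as a black box throughout. Your argument is therefore not competing with anything in the paper, and it is a correct, self-contained proof of the cited result.

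The skeleton is the classical one: identify the Fricke polynomial $\ka=\al^2+\be^2+\ga^2-\al\be\ga-4$ with (minus) the Gram determinant of $(I,A,B,AB)$ for the trace form, use Cayley--Hamilton to see that $\BF_q[A,B]=\operatorname{span}\{I,A,B,AB\}$, and invoke Burnside's theorem over $\overline{\BF}_q$ to conclude that $\ka=0$ if and only if $A$ and $B$ share an eigenline over $\overline{\BF}_q$. The Frobenius-conjugate argument then pins such a subgroup inside a Borel (eigenline over $\BF_q$) or inside a non-split torus (Galois-conjugate pair of eigenlines), and conversely a structural subgroup visibly fixes a line. All of this is correct.

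Two small remarks. First, once the Frobenius argument has shown that a common eigenline forces the group into a Borel or a cyclic subgroup, you are done with that direction; the subsequent claim that every ``remaining type'' in Table~\ref{table.subgroups} acts irreducibly is redundant, and it is not literally true as stated. When $q$ is even, every involution of $\PSL_2(q)$ is unipotent, so the Klein four-subgroup of any $A_4\subset\PSL_2(q)$ lies inside a single unipotent radical, and the whole $A_4$ sits in a Borel (indeed $A_4$ \emph{is} the Borel of $\PSL_2(4)\cong A_5$). Thus in characteristic~$2$ such an $A_4$ is reducible --- but it is then also structural, so the equivalence survives. The moral is that the trichotomy structural/subfield/small in the paper is a list of isomorphism types, not a partition of embedded subgroups, and your case-check should not treat it as one. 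Second, in the non-$\BF_q$-eigenline case the resulting torus is always non-split (the two fixed points form a Frobenius-conjugate pair), so ``split or non-split'' can be tightened to ``non-split''. Neither remark affects the validity of your proof.
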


\subsection{Orders and traces}
For an integer $n>1$ we denote
\begin{equation}
\Tr_q(n) = \{\al \in \mathbb{F}_q :\, \al=\tr(A), A \in \SL_2(q),
|\bar{A}|=n\}.
\end{equation}
It is easy to see from Table~\ref{table.elm.PSL} that for any prime
power $q$, $\Tr_q(2)=\{0\}$, $\Tr_q(3)=\{\pm 1\}$, and for any odd
$q=p^e$, $\Tr_q(p)=\{\pm 2\}$. Moreover, when $q$ is odd, then $\al
\in \Tr_q(n)$ if and only if $-\al \in \Tr_q(n)$. In fact, for any
prime power $q$ and integer $n>1$, $\Tr_q(n)$ can be effectively
computed as follows.

\begin{prop}\label{prop.Tr.q.n}
Denote by $\Pri_q(n)$ the set of primitive roots of unity of
order $n$ in $\mathbb{F}_q$.
\begin{itemize}
\item Let $q=2^e$ for some positive integer $e$ and let $n>1$ be an integer, then
\begin{equation}
\Tr_q(n) = \begin{cases} \{0\} & if \ n=2 \\
\{a+a^{-1}: a \in \Pri_q(n)\} & if \ n \ divides \ q-1 \\
\{b+b^q: b \in \Pri_{q^2}(n)\} & if \ n \ divides \ q+1 \\
\emptyset & otherwise
\end{cases}
\end{equation}

\item Let $q=p^e$ for some odd prime $p$ and some positive integer $e$ and let $n>1$ be an integer, then
\begin{equation}
\Tr_q(n) = \begin{cases} \{\pm 2\} & if \ n=p \\
\{\pm (a+a^{-1}): a \in \Pri_q(2n)\} & if \ n \ divides \ \frac{q-1}{2} \\
\{\pm (b+b^q): b \in \Pri_{q^2}(2n)\} & if \ n \ divides \ \frac{q+1}{2} \\
\emptyset & otherwise
\end{cases}
\end{equation}
\end{itemize}
\end{prop}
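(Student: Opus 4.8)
The plan is to read off the three element types from Table~\ref{table.elm.PSL} and, in each case, reduce the order of $\bar A\in\PSL_2(q)$ to the order of the corresponding eigenvalue. Fix a preimage $A\in\SL_2(q)$ and set $\al=\tr(A)$. If $P(\la)=\la^2-\al\la+1$ has a double root then $\bar A$ is unipotent, and when $\bar A\neq 1$ one has $\al=\pm 2$ and $|\bar A|=p$; conversely any $A\neq\pm I$ with $\al=\pm 2$ is a non-identity unipotent of this kind. If $P$ has two distinct roots in $\BF_q$ then $A$ is conjugate over $\BF_q$ to $\mathrm{diag}(a,a^{-1})$ with $a\in\BF_q^*$ and $\al=a+a^{-1}$; if $P$ has no root in $\BF_q$ then its roots form a Frobenius pair of norm $1$, so $A$ is conjugate over $\BF_{q^2}$ to $\mathrm{diag}(a,a^q)$ with $a\in\BF_{q^2}^*$, $a^{q+1}=1$, $a\notin\BF_q^*$, and $\al=a+a^q$. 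In both semisimple cases $\bar A^{m}=1$ if and only if $a^{m}\in\{\pm 1\}$, so $|\bar A|$ depends only on $|a|$.

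The second ingredient is an elementary fact about a finite cyclic group $H$ possessing a unique involution $-1$: an element $h$ of order $k$ has image of order $k$ in $H/\{\pm 1\}$ when $k$ is odd, and of order $k/2$ when $k$ is even. Hence $|\bar h|=n$ exactly when $|h|=2n$, or $|h|=n$ with $n$ odd; moreover multiplication by $-1$ carries $\Pri(n)$ bijectively onto $\Pri(2n)$ when $n$ is odd and maps $\Pri(2n)$ into itself when $n$ is even, while negation sends $c+c^{-1}\mapsto-(c+c^{-1})$ and (since $q$ is odd, in the non-split case) $c+c^q\mapsto-(c+c^q)$. Applying this with $H=\BF_q^*$ (split case) and $H=\mu_{q+1}\le\BF_{q^2}^*$, the group of $(q+1)$-th roots of unity (non-split case), one finds that the traces of semisimple elements of order $n$ of a given type are exactly $\{\pm(a+a^{-1}):a\in\Pri_q(2n)\}$, respectively $\{\pm(b+b^q):b\in\Pri_{q^2}(2n)\}$, the $\pm$ absorbing both the $|a|=n$ (odd $n$) and the $|a|=2n$ contributions. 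When $q$ is even, $-1=1$ and $\PSL_2(q)=\SL_2(q)$, so $|\bar A|=|A|=|a|$ and the same conclusion holds with $\Pri_q(n)$, $\Pri_{q^2}(n)$ and no $\pm$.

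It then remains to assemble the pieces and record the conditions for nonemptiness and exclusivity. For $q$ odd: unipotent elements have order $p$ and trace $\pm 2$, and since $p\nmid q^{2}-1$ no semisimple element has order $p$, so $\Tr_q(p)=\{\pm 2\}$; for $n\neq p$, a split (resp.\ non-split) semisimple element of order $n$ exists iff $\Pri_q(2n)\neq\emptyset$ iff $2n\mid q-1$ iff $n\mid(q-1)/2$ (resp.\ iff $n\mid(q+1)/2$), and these two regimes are disjoint for $n>1$ because $n$ dividing both $(q-1)/2$ and $(q+1)/2$ forces $n\mid\gcd((q-1)/2,(q+1)/2)=1$; since every element of order $n$ lies in one of the three types, $\Tr_q(n)=\emptyset$ outside these cases. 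For $q$ even: $\Tr_q(2)=\{0\}$ (unipotents only, as $q\pm 1$ is odd), for $n\mid q-1$ (which forces $n$ odd) the split semisimple elements give $\{a+a^{-1}:a\in\Pri_q(n)\}$, for $n\mid q+1$ the non-split ones give $\{b+b^q:b\in\Pri_{q^2}(n)\}$, these divisibility conditions are mutually exclusive for $n>1$, and no element exists for any other $n$ (in particular for every even $n>2$).

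The main obstacle is the bookkeeping inside the cyclic group $H$: making precise the passage from $\SL_2$-orders to $\PSL_2$-orders and checking that the single expression $\{\pm(a+a^{-1}):a\in\Pri_q(2n)\}$ (and its non-split analogue) really is closed under negation and accounts for \emph{every} element whose image has order $n$, in both parities of $n$. Once this is settled, the remainder is the routine case-split over the three element types together with the divisibility observations recorded above.
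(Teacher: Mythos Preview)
Your proof is correct and follows essentially the same approach as the paper: both reduce to the relationship between the multiplicative order of the eigenvalue $a$ and the order of $\bar A$ via the quotient by $\{\pm 1\}$, and then read off the traces from the Jordan forms in Table~\ref{table.elm.PSL}. The paper only spells out one subcase ($q$ odd, $n\mid (q-1)/2$) and declares the rest ``similar'', whereas you package the order computation as a clean lemma about cyclic groups with a unique involution and also verify the mutual exclusivity of the cases and the ``$\emptyset$ otherwise'' clause---so your write-up is in fact more complete, but the underlying argument is the same.
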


\begin{proof} We prove the case where $q$ is odd and $n$ divides $(q-1)/2$.
The other cases are similar.

Assume first that $n$ is even. Let $a$ be a primitive root of unity
of order $2n$. Then $-a$ is also a primitive root of unity of order
$2n$, and $(-a)^n = a^n = -1$. Thus the matrices
\[
    A = \begin{pmatrix}
    a & 0 \\ 0 & a^{-1}
    \end{pmatrix}
    \quad \text{and} \quad
    -A = \begin{pmatrix}
    -a & 0 \\ 0 & -a^{-1}
    \end{pmatrix}
\]
both reduce to the same element $\bar{A} \in G$ of order $n$. Hence,
$a+a^{-1}$ and $-(a+a^{-1})$ both belong to $\Tr_q(n)$. $\Tr_q(n)$
contains only the elements of the claimed form, since it suffices to
consider only the conjugacy classes of elements of $G$, described in
Table~\ref{table.elm.PSL}.

Now assume that $n$ is odd. Then $a$ is a primitive root of unity of
order $n$ if and only if $-a$ is a primitive root of unity of order
$2n$. In this case, $(-a)^n=-a^n=-1$. Thus the matrices
\[
    A = \begin{pmatrix}
    a & 0 \\ 0 & a^{-1}
    \end{pmatrix}
    \quad \text{and} \quad
    -A = \begin{pmatrix}
    -a & 0 \\ 0 & -a^{-1}
    \end{pmatrix}
\]
both reduce to the same element $\bar{A} \in G$ of order $n$. Hence,
$a+a^{-1}$ and $-(a+a^{-1})$ both belong to $\Tr_q(n)$. Again,
considering the conjugacy classes appearing in
Table~\ref{table.elm.PSL} shows that $\Tr_q(n)$ contains only the
elements of the claimed form.
\end{proof}

\begin{prop}\label{prop.good.G.order}
Assume that $q=p^e$ is odd. Let $C \in \SL_2(q)$, and denote
$\ga=\tr(C)$ and $t=|\bar{C}|$. Assume that $\ga \ne \pm 2$ (or
equivalently, $t \ne p$). Then $t$ is a \emph{$q$-good} order if and
only if one of $2+\ga$, $2-\ga$ is a square in $\mathbb{F}_q$.
\end{prop}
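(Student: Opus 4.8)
The plan is to diagonalise $C$ and reduce the assertion to an elementary fact about square roots in a cyclic group. Put $\eps=1$ if $C$ is split and $\eps=-1$ if $C$ is non-split, so that by Table~\ref{table.elm.PSL} the eigenvalues of $C$ are $a$ and $a^{-1}$, where $a$ lies in the cyclic group $T$ that equals $\BF_q^*$ when $\eps=1$ and $\{x\in\BF_{q^2}^*:x^{q+1}=1\}$ when $\eps=-1$; this group has even order $N:=q-\eps$, and $\ga=a+a^{-1}$ (using $a^q=a^{-1}$ in the non-split case). Write $m$ for the order of $a$ in $T$, so $m\mid N$. The hypothesis $\ga\ne\pm2$ says exactly that $a\ne\pm1$, i.e.\ $m\ge3$, and an elementary check (cf.\ the proof of Proposition~\ref{prop.Tr.q.n}) shows that the order $t$ of $\bar C$ equals $m$ if $m$ is odd and $m/2$ if $m$ is even.

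The first step will be the identity $2+\ga=(b+b^{-1})^2$, valid for every square root $b\in\overline{\BF_q}$ of $a$. Since $b^{2N}=1$ and $2N\mid q^2-1$ we have $b\in\BF_{q^2}$, and since $b+b^{-1}\ne0$ (as $\ga\ne-2$) the element $2+\ga$ is a square in $\BF_q$ if and only if $b+b^{-1}\in\BF_q$. Now $b+b^{-1}\in\BF_q$ holds exactly when $b^q\in\{b,b^{-1}\}$, i.e.\ when $b\in\BF_q$ or $b^{q+1}=1$; invoking $a\ne\pm1$ to rule out the alternative foreign to $T$, this holds in both cases if and only if $b\in T$, that is, if and only if $a$ has a square root in $T$. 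Applying the same reasoning to $-C$ (whose eigenvalues $-a,-a^{-1}$ also lie in $T$, because $-1\in T$) shows that $2-\ga$ is a square in $\BF_q$ if and only if $-a$ has a square root in $T$. So the proposition becomes the purely arithmetic assertion: $t$ is \emph{$q$-good} if and only if $a$ or $-a$ has a square root in the cyclic group $T$ of order $N$.

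For the final step I would argue entirely inside $T$. An element of order $k$ has a square root in $T$ precisely when it lies in the unique index-$2$ subgroup $T^2=\{x\in T:x^{N/2}=1\}$, i.e.\ when $k\mid N/2$; and since $(-a)^{N/2}=(-1)^{N/2}a^{N/2}$ with $a^{N/2}\in\{\pm1\}$, the statements ``$a$ has a square root in $T$'' and ``$-a$ has a square root in $T$'' coincide when $4\mid N$ and are complementary when $N\equiv2\pmod4$. Hence ``$a$ or $-a$ has a square root in $T$'' means $m\mid N/2$ if $4\mid N$, and holds unconditionally if $N\equiv2\pmod4$. On the other side, $m\mid N$ gives $t\mid N$, so $t$ is automatically $q$-good whenever it is odd; and when $t$ is even — equivalently $4\mid m$, in which case $t=m/2$ and $4t=2m$ — the relation $\gcd(q-\eps,q+\eps)=2$ forbids $4t\mid q+\eps$, so $q$-goodness of $t$ reduces to $2m\mid N$, i.e.\ to $m\mid N/2$. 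Thus ``$t$ is $q$-good'' is equivalent to ``$4\nmid m$ or $m\mid N/2$'', and a short case analysis on whether $4\mid N$ or $N\equiv2\pmod4$ (using that $4\mid m$ forces $4\mid N$) identifies this with ``$a$ or $-a$ has a square root in $T$''. The part demanding the most care will be handling the split and non-split tori uniformly and keeping the $2$-adic valuation bookkeeping straight; the remaining checks are routine.
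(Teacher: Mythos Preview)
Your argument is correct. The core identity $2+\ga=(b+b^{-1})^2$ for $b^2=a$ is exactly the paper's observation that $(a+1)^2/a=a+2+a^{-1}$, and your reduction to ``$a$ or $-a$ is a square in the cyclic group $T$'' matches the paper's criterion precisely. The step ruling out the alternative foreign to $T$ is fine: in the split case $b^{q+1}=1$ together with $b^{2(q-1)}=a^{q-1}=1$ forces $b^4=1$ and hence $a=\pm1$, and the non-split case is symmetric.

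Where you differ from the paper is in organisation rather than substance. The paper treats the split and non-split cases separately, parametrises $\ga$ via Proposition~\ref{prop.Tr.q.n} using a primitive $2t$-th root of unity, and then runs through the sub-cases ($t$ even; $t$ odd with $q\equiv1\bmod4$; $t$ odd with $q\equiv3\bmod4$) by hand in each torus. You instead work uniformly with the torus $T$ of order $N=q-\eps$, phrase everything in terms of the eigenvalue order $m$, and collapse the endgame to a single $2$-adic case split on $N\bmod4$. This buys you a shorter and more conceptual proof with no duplication between the two tori; the paper's version, by contrast, is more explicit and closer to the concrete description of $\Tr_q(n)$, which makes it marginally easier to read off what happens for a specific $t$ and $q$. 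Neither approach needs anything the other does not already implicitly contain.
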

\begin{proof}
As $t \ne p$ then $t$ divides either $(q-1)/2$ or $(q+1)/2$.

If $t$ divides $(q-1)/2$ then $\ga=a+a^{-1}$ or $\ga=-(a+a^{-1})$,
for some primitive root of unity $a$ of order $2t$ in $\mathbb{F}_q$
(see Proposition~\ref{prop.Tr.q.n}). Hence,
\[
    \{2+\ga, 2-\ga\} = \{a+2+a^{-1},(-a)+2+(-a)^{-1}\}.
\]
Therefore, $2+\ga$ or $2-\ga$ is a square in $\mathbb{F}_q$ if and
only if $a=c^2$ or $-a=c^2$ for some $c \in \mathbb{F}_q$. Indeed,
$a+2+a^{-1}$ is a square if and only if $(a+1)^2/a$ is a square if
and only if $a$ is a square in $\mathbb{F}_q$, and similarly for
$(-a)+2+(-a)^{-1}$.

Now, one the following necessarily holds:
\begin{itemize}
\item If $t$ is even then $-a$ is also a primitive root of unity
of order $2t$. Hence, $a$ is a square in $\mathbb{F}_q$ if and only
if $-a$ is a square is $\mathbb{F}_q$. In addition, $a$ is a square
in $\mathbb{F}_q$ if and only if $\mathbb{F}_q$ contains a primitive
root of unity of order $4t$, namely, if and only if $4t$ divides
$q-1$.

\item If $t$ is odd and $q \equiv 1 \bmod 4$ then $4t$ divides
$q-1$, and so $\mathbb{F}_q$ contains a primitive root of unity of
order $4t$. Thus $a$, which is a primitive root of unity of order
$2t$, is a square in $\mathbb{F}_q$, as required.

\item If $t$ is odd and $q \equiv 3 \bmod 4$ then $\mathbb{F}_q$
contains a primitive root of unity of order $2t$ but does not
contain a primitive root of unity of order $4t$, and so, $a$ is a
non-square in $\mathbb{F}_q$. However, $-a$ is a primitive root of
unity of order $t$, and so, it is necessarily a square in
$\mathbb{F}_q$, as required.
\end{itemize}
In conclusion, $a=c^2$ or $-a=c^2$ for some $c \in \mathbb{F}_q$ if
and only if either $t$ is odd and divides $q-1$ or $t$ is even and
$4t$ divides $q-1$.

\medskip

If $t$ divides $(q+1)/2$ then $\ga=a+a^q$ or $\ga=-(a+a^q)$, for
some primitive root of unity $a$ of order $2t$ in $\mathbb{F}_{q^2}$
(see Proposition~\ref{prop.Tr.q.n}). Hence,
\[
    \{2+\ga,2-\ga\} = \{a+2+a^q,(-a)+2+(-a)^q\}.
\]
Therefore, $2+\ga$ or $2-\ga$ is a square in $\mathbb{F}_q$ if and
only if $a=c^2$ or $-a=c^2$ for some $c \in \mathbb{F}_{q^2}$
satisfying $c^{q+1}=1$.

Now, one the following necessarily holds:
\begin{itemize}
\item If $t$ is even then $-a$ is also a primitive root of unity
of order $2t$. Hence, $a=c^2$ for some $c \in \mathbb{F}_{q^2}$
satisfying $c^{q+1}=1$ if and only if $-a=b^2$ for some $b \in
\mathbb{F}_{q^2}$ satisfying $b^{q+1}=1$. This is equivalent to the
condition that $4t$ divides $q+1$.

\item If $t$ is odd and $q \equiv 3 \bmod 4$ then $\mathbb{F}_{q^2}$
contains a primitive root of unity $b$ of order $4t$ satisfying
$b^{q+1}=1$. Hence, $a=c^2$ for some $c \in \mathbb{F}_{q^2}$
satisfying $c^{q+1}=1$, as required.

\item If $t$ is odd and $q \equiv 1 \bmod 4$ then $\mathbb{F}_{q^2}$
does not contain a primitive root of unity $c$ of order $4t$
satisfying $c^{q+1}=1$. However, in this case, $-a=b^2$ for some $b
\in \mathbb{F}_{q^2}$ satisfying $b^{q+1}=1$, as required.
\end{itemize}
In conclusion, $a=c^2$ or $-a=c^2$ for some $c \in \mathbb{F}_{q^2}$
satisfying $c^{q+1}=1$ if and only if either $t$ is odd and divides
$q+1$ or $t$ is even and $4t$ divides $q+1$.
\end{proof}

\subsection{Number of elements}\label{sect.num}
\begin{defn}
Let $q$ be a prime power and let $\al \in \BF_q$.

If $\al=\pm 2$ we say that $\al$ is a \emph{unipotent} trace. If
$\al$ is a trace of a semisimple split (respectively, non-split)
matrix in $\SL_2(q)$ we say that $\al$ is \emph{split}
(respectively, \emph{non-split}).

Assume that $q$ is odd and $\al \ne \pm 2$. If at least one of
$2+\al$, $2-\al$ is a square in $\BF_q$ we say that $\al$ is a
\emph{good} trace. If none of $2+\al$, $2-\al$ is a square in
$\BF_q$ we say that $\al$ is a \emph{bad} trace.
\end{defn}

\begin{lemma}\label{lem.num.tr} ${}$
\begin{enumerate}
\item If $q$ is even then $\BF_q$ contains one unipotent,
$(q-2)/2$ split and $q/2$ non-split traces.
\item If $q$ is odd then $\BF_q$ contains $2$ unipotent,
$(q-3)/2$ split and $(q-1)/2$ non-split traces.
\item If $q \equiv 1 \bmod 4$ then $\BF_q$ contains $(q-1)/4$ bad
traces.
\item If $q \equiv 3 \bmod 4$ then $\BF_q$ contains $(q+1)/4$ bad
traces.
\end{enumerate}
\end{lemma}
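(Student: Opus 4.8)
The plan is to run through the three kinds of traces using the classification of $\SL_2(q)$-conjugacy classes together with the shape of the characteristic polynomial $P(\la)=\la^2-\al\la+1$, and then to obtain the count of \emph{bad} traces from a short quadratic-character sum.

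For parts (1) and (2), I would first record the unipotent traces: in characteristic $2$ the only one is $\al=0$ (since $\pm 2=0$), whereas for odd $q$ they are the two distinct elements $\al=\pm 2$. Next, by Theorem~\ref{thm.Mac} every $\al\in\BF_q$ is the trace of some $A\in\SL_2(q)$; as soon as $\al$ is not a unipotent trace the polynomial $P(\la)$ is separable, so $A$ is regular semisimple, and it is split exactly when the two roots of $P$ lie in $\BF_q$. Those two roots are $a$ and $a^{-1}$ with $a\in\BF_q^*$, and they are distinct iff $a\ne\pm 1$, so the split traces form the set $\{a+a^{-1}:a\in\BF_q^*\setminus\{\pm 1\}\}$ (respectively $a\in\BF_q^*\setminus\{1\}$ when $q$ is even). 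Since $a\mapsto a^{-1}$ acts freely on that set and $a+a^{-1}=b+b^{-1}$ forces $b\in\{a,a^{-1}\}$, the number of split traces is half its cardinality, namely $(q-3)/2$ for odd $q$ and $(q-2)/2$ for even $q$. As unipotent, split and non-split traces partition $\BF_q$, the remaining elements are precisely the non-split traces, of which there are $(q-1)/2$ and $q/2$ respectively.

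For parts (3) and (4) I would let $\chi$ denote the quadratic character of $\BF_q$ and write the number of bad traces as
\[
N=\sum_{\al\in\BF_q\setminus\{\pm 2\}}\frac{1-\chi(2+\al)}{2}\cdot\frac{1-\chi(2-\al)}{2},
\]
which is legitimate since $2\pm\al\ne 0$ over the range of summation. Expanding, the constant term contributes $q-2$; each linear sum $\sum\chi(2\pm\al)$ equals $-1$ once the two excluded values are removed (using $\sum_{u\in\BF_q}\chi(u)=0$ and $\chi(4)=1$); and the cross term is $\sum_{\al}\chi(4-\al^2)=\chi(-1)\sum_{\al}\chi(\al^2-4)$. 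The inner sum is $-1$, which I would check in one line by the substitution $u=\al-y,\ v=\al+y$, turning $\#\{(\al,y):y^2=\al^2-4\}$ into $\#\{(u,v)\in\BF_q^2:uv=4\}=q-1$. Collecting terms yields $N=\bigl(q-\chi(-1)\bigr)/4$, and since $\chi(-1)=1$ precisely when $q\equiv 1\bmod 4$, this equals $(q-1)/4$ in that case and $(q+1)/4$ when $q\equiv 3\bmod 4$.

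I do not expect a genuine obstacle. The only delicate point is the bookkeeping around the exceptional traces $\al=0$ (even $q$) and $\al=\pm 2$ (odd $q$) when passing between the descriptions ``$\al$ is a trace'', ``$\al$ lies in the image of $a\mapsto a+a^{-1}$'', and ``$\chi(2\pm\al)=-1$'', together with keeping the signs straight in the character-sum expansion.
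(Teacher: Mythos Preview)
Your argument is correct. For parts (1) and (2) you essentially reproduce the content of \cite[Lemma~2]{Mac}, which the paper simply cites, so there is no real difference there.

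For parts (3) and (4) your route genuinely differs from the paper's. The paper invokes Proposition~\ref{prop.good.G.order} (the equivalence between ``$\ga$ is a good trace'' and ``the order $t$ of $\bar C$ is $q$-good''): when $q\equiv 1\bmod 4$ the non-split torus has odd order $(q+1)/2$, so every non-split order is odd and hence $q$-good, forcing all bad traces to be split; one then counts inside the split torus via the parametrisation $\ga=a+a^{-1}$ (bad $\Leftrightarrow$ $a$ a non-square) to get $\tfrac{1}{2}\bigl(\tfrac{q-3}{2}+1\bigr)=(q-1)/4$. The case $q\equiv 3\bmod 4$ is symmetric. Your quadratic-character expansion, by contrast, is self-contained --- it never touches Proposition~\ref{prop.good.G.order} or the split/non-split dichotomy --- and handles both residues in a single formula $N=(q-\chi(-1))/4$. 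The paper's approach is terser and explains \emph{where} the bad traces live (entirely in one torus), which is conceptually useful later; yours is more uniform and requires no prior structural input.
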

\begin{proof}
(1) and (2) are due to~\cite[Lemma 2]{Mac}.


(3) If $q \equiv 1 \bmod 4$ then $(q-1)/2$ is even, and so all the
bad traces are necessarily split. Hence, there are
$\frac{1}{2}\bigl(\frac{1}{2}(q-3)+1\bigr)$ bad traces.

(4) If $q \equiv 3 \bmod 4$ then $(q+1)/2$ is even, and so all the
bad traces are non-split. Hence, there are
$\frac{1}{2}\bigl(\frac{1}{2}(q-1)+1\bigr)$ bad traces.
\end{proof}

Now, the following corollaries easily follow from
Table~\ref{table.elm.PSL}, Proposition~\ref{prop.good.G.order} and
Lemma~\ref{lem.num.tr}.

\begin{cor}\label{cor.num.elm.even}
If $q$ is even then $G=\PSL_2(q)$ contains:
\begin{itemize}
\item $q^2-1$ unipotent elements,
\item $\frac{1}{2}q(q+1)(q-2)$ semisimple split elements,
\item $\frac{1}{2}q^2(q-1)$ semisimple non-split elements.
\end{itemize}
\end{cor}

\begin{cor}\label{cor.num.elm.odd}
If $q$ is even then $G=\PSL_2(q)$ contains:
\begin{itemize}
\item $q^2-1$ unipotent elements,
\item $\frac{1}{4}q(q+1)(q-3)$ semisimple split elements,
\item $\frac{1}{4}q(q-1)^2$ semisimple non-split elements.
\item $\frac{1}{8}q(q^2-1)=\frac{|G|}{4}$ semisimple
elements whose order is \emph{not $q$-good}.
\end{itemize}
\end{cor}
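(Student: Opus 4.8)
The plan is to obtain all four counts by combining three facts already available: the conjugacy class sizes in Table~\ref{table.elm.PSL}, the trace counts in Lemma~\ref{lem.num.tr}, and the trace criterion for $q$-good order in Proposition~\ref{prop.good.G.order}. Here $q$ is odd, so $d=\gcd(2,q-1)=2$ and $|G|=q(q^2-1)/2$. The unipotent count is immediate: by Table~\ref{table.elm.PSL} there are exactly $d=2$ unipotent classes, each of size $(q^2-1)/d=(q^2-1)/2$, hence $q^2-1$ unipotent elements altogether.

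For the two semisimple counts the delicate point is the sign ambiguity of the trace in $\PSL_2(q)$: the two preimages of $g\in G$ in $\SL_2(q)$ are $A$ and $-A$, with traces $\al$ and $-\al$, and since a semisimple $\SL_2(q)$-class is determined by its trace, the $G$-classes of split (resp.\ non-split) semisimple elements are indexed by the \emph{pairs} $\{\al,-\al\}$ of split (resp.\ non-split) traces, not by the traces themselves. First I would read off from Table~\ref{table.elm.PSL} that a two-element pair $\{\al,-\al\}$ gives one $G$-class of size $q(q+1)$ (split) or $q(q-1)$ (non-split), while the lone self-paired trace $\al=0$---which by Proposition~\ref{prop.Tr.q.n} is a split trace exactly when $q\equiv1\bmod4$ and a non-split trace exactly when $q\equiv3\bmod4$---gives a $G$-class of half that size, as recorded by $\ka(\al)$. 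The observation that collapses the two congruence cases is that in either case the number of elements contributed by a pair, divided by the number of traces in that pair, equals $q(q+1)/2$ (split) or $q(q-1)/2$ (non-split). Summing over the $(q-3)/2$ split and $(q-1)/2$ non-split traces of Lemma~\ref{lem.num.tr}(2) then gives $\tfrac{q-3}{2}\cdot\tfrac{q(q+1)}{2}=\tfrac{1}{4}q(q+1)(q-3)$ semisimple split elements and $\tfrac{q-1}{2}\cdot\tfrac{q(q-1)}{2}=\tfrac{1}{4}q(q-1)^2$ semisimple non-split elements.

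For the last count, a semisimple element has trace $\ga\ne\pm2$ by Table~\ref{table.elm.PSL}, so Proposition~\ref{prop.good.G.order} applies and tells us its order is not $q$-good exactly when $\ga$ is a bad trace. By the proof of Lemma~\ref{lem.num.tr}, when $q\equiv1\bmod4$ all bad traces are split and there are $(q-1)/4$ of them, and when $q\equiv3\bmod4$ all bad traces are non-split and there are $(q+1)/4$ of them. Feeding these into the per-trace count just established gives $\tfrac{q-1}{4}\cdot\tfrac{q(q+1)}{2}$ such elements when $q\equiv1\bmod4$ and $\tfrac{q+1}{4}\cdot\tfrac{q(q-1)}{2}$ when $q\equiv3\bmod4$; both equal $\tfrac{1}{8}q(q^2-1)=|G|/4$.

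The only genuine obstacle is the bookkeeping in the middle step---pinning down which congruence of $q$ makes $\al=0$ split versus non-split, and remembering that this one trace carries $\ka=2$---after which the ``contribution per trace'' identity forces the two cases to produce identical closed forms and the remainder is arithmetic. As a cross-check that sidesteps traces entirely, one can count via maximal tori: $G$ has $q(q+1)/2$ split tori of order $(q-1)/2$ and $q(q-1)/2$ non-split tori of order $(q+1)/2$, any two tori of the same type meet only in the identity, and every non-identity semisimple element lies in a unique torus of the appropriate type, so the split and non-split semisimple counts are $\tfrac{q(q+1)}{2}\cdot\tfrac{q-3}{2}$ and $\tfrac{q(q-1)}{2}\cdot\tfrac{q-1}{2}$ at once; a short count of the orders of the torus elements then shows each even-order torus contains exactly half of its elements in classes of non-$q$-good order while odd-order tori contain none, recovering $|G|/4$.
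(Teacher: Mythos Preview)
Your argument is correct and follows exactly the route the paper intends: the corollary is stated as an immediate consequence of Table~\ref{table.elm.PSL}, Lemma~\ref{lem.num.tr}, and Proposition~\ref{prop.good.G.order}, and you have simply written out the details of that deduction, including the careful handling of the pair $\{\al,-\al\}$ and the self-paired trace $\al=0$. The torus cross-check you append is a pleasant independent verification not present in the paper, though the assertion that each even-order torus carries exactly half its elements in non-$q$-good orders deserves a line of justification if you keep it.
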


\subsection{Unipotent elements when $q$ is odd}\label{sect.unip.elm}
Consider the following matrices in $G_0=\SL_2(q)$ (where $q$ is
odd):
\[
    U_1 = \begin{pmatrix} 1 & 1 \\ 0 & 1 \end{pmatrix}, \quad
    U_{-1} = \begin{pmatrix} -1 & 1 \\ 0 & -1 \end{pmatrix} = -U_1^{-1},
\]
\[
    U'_1 = XU_1X^{-1} = \begin{pmatrix} 1 & x^2 \\ 0 & 1 \end{pmatrix} \in
    G_0, \quad
    U'_{-1} = XU_{-1}X^{-1} = \begin{pmatrix} -1 & x^2 \\ 0 & -1 \end{pmatrix} \in
    G_0,
\]
where $x \in \mathbb{F}_{q^2} \setminus \mathbb{F}_q$ satisfies that
$x^2 \in \mathbb{F}_q$ and $X=\begin{pmatrix} x & 0 \\ 0 & x^{-1}
\end{pmatrix} \in \SL_2(q^2)$.


\begin{prop}\label{prop.unip.G0}
Let $G_0=\SL_2(q)$ when $q$ is odd. Then, for any $A \in G_0$,
$XAX^{-1} \in G_0$. Moreover,
\begin{itemize}
\item If $A \ne I$ and $\tr(A)=2$ then $A$ is $G_0$-conjugate to either $U_1$ or $U'_1$.
\item If $A \ne -I$ and $\tr(A)=-2$ then $A$ is $G_0$-conjugate to either $U_{-1}$ or $U'_{-1}$.
\end{itemize}
\end{prop}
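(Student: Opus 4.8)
The plan is to handle the two assertions of the proposition in turn. The first claim, that conjugation by $X$ stabilizes $G_0$, is a direct computation: writing $A=\begin{pmatrix} a & b\\ c & d\end{pmatrix}\in\SL_2(q)$, one has $XAX^{-1}=\begin{pmatrix} a & x^2 b\\ x^{-2}c & d\end{pmatrix}$. Since $x^2\in\mathbb{F}_q$ by the choice of $x$, also $x^{-2}=(x^2)^{-1}\in\mathbb{F}_q$, so all four entries lie in $\mathbb{F}_q$; conjugation preserves the determinant, so $XAX^{-1}\in\SL_2(q)=G_0$. In particular $U_1'=XU_1X^{-1}$ and $U_{-1}'=XU_{-1}X^{-1}$ do lie in $G_0$.

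For the classification of trace-$2$ elements I would recall the standard normal form for unipotents in $\SL_2(q)$. If $\tr(A)=2$ and $A\ne I$, then Cayley--Hamilton gives $(A-I)^2=0$, so $A-I$ is a nonzero nilpotent of rank $1$; taking a spanning vector of $\ker(A-I)$ together with any complementary vector as a basis shows that $A$ is $\GL_2(q)$-conjugate, and hence (after correcting the determinant by a diagonal conjugation) $\SL_2(q)$-conjugate, to $\begin{pmatrix} 1 & c\\ 0 & 1\end{pmatrix}$ for some $c\in\mathbb{F}_q^*$. I would then show that $\begin{pmatrix} 1 & c\\ 0 & 1\end{pmatrix}$ and $\begin{pmatrix} 1 & c'\\ 0 & 1\end{pmatrix}$ are $\SL_2(q)$-conjugate precisely when $c'/c\in(\mathbb{F}_q^*)^2$: a conjugating matrix must send the common fixed line $\mathbb{F}_q e_1$ to itself, hence is upper triangular $\begin{pmatrix} a & b\\ 0 & a^{-1}\end{pmatrix}$, and a short computation gives $c'=a^2c$; conversely a diagonal conjugation realizes any square ratio. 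As $[\mathbb{F}_q^*:(\mathbb{F}_q^*)^2]=2$, there are exactly two such classes, and $U_1$ (parameter $1$) and $U_1'$ (parameter $x^2$, a non-square since $x\notin\mathbb{F}_q$) represent them. This is the first bullet.

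For trace $-2$ I would reduce to the above. If $\tr(A)=-2$ and $A\ne -I$, then $-A$ has trace $2$ and $-A\ne I$, so $-A$ is $G_0$-conjugate to $U_1$ or to $U_1'$, whence $A$ is $G_0$-conjugate to $-U_1$ or to $-U_1'$. Since conjugation by the central element $-I$ permutes conjugacy classes, $-U_1$ and $-U_1'$ represent the two distinct $G_0$-classes with trace $-2$ (apart from that of $-I$). It remains only to observe that $U_{-1}$ and $U_{-1}'$ themselves lie in these two distinct classes: applying the trace-$2$ criterion to $-U_{-1}=\begin{pmatrix} 1 & -1\\ 0 & 1\end{pmatrix}$ and $-U_{-1}'=\begin{pmatrix} 1 & -x^2\\ 0 & 1\end{pmatrix}$, whose parameters have ratio $x^2$ (a non-square), shows $U_{-1}$ and $U_{-1}'$ are non-conjugate in $G_0$, and since neither equals $-I$ they exhaust the two classes.

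The computations involved — the conjugation formulas and the rank-$1$ normal form — are routine; the only place calling for care is the $\SL$-versus-$\GL$ subtlety, namely that over $\mathbb{F}_q$ the non-identity unipotents of a given trace split into two $\SL_2(q)$-classes rather than one, together with the bookkeeping that verifies $U_{\pm 1}$ and $U_{\pm 1}'$ fall into different ones. I do not expect any real obstacle beyond this.
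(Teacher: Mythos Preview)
Your argument is correct and follows essentially the same line as the paper's: both compute $XAX^{-1}$ explicitly and use the square/non-square dichotomy in $\mathbb{F}_q^*$ to see that $U_1$ and $U_1'$ lie in distinct $G_0$-classes, the paper simply citing Table~\ref{table.elm.PSL} for the fact that there are exactly two such classes while you derive it from scratch. One small wording slip worth fixing: in your trace-$(-2)$ reduction you write ``conjugation by the central element $-I$ permutes conjugacy classes'', but conjugation by a central element is the identity; what you are actually using is that \emph{multiplication} by $-I$ (the map $A\mapsto -A$) carries $G_0$-conjugacy classes bijectively to $G_0$-conjugacy classes.
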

\begin{proof}
Indeed, if $A=\begin{pmatrix} a & b \\ c & d \end{pmatrix} \in G_0$
then $XAX^{-1} = \begin{pmatrix} a & bx^2 \\ cx^{-2} & d
\end{pmatrix} \in G_0$. Moreover, $U_1$ is not $G_0$-conjugate to
$U'_1=XU_1X^{-1}$, since $x^2$ is not a square of some element in
$\mathbb{F}_q$. Hence, any $I \ne A \in G_0$ with $\tr(A)=2$ is
$G_0$-conjugate to either $U_1$ or $U'_1$ (see
Table~\ref{table.elm.PSL}).
\end{proof}

\begin{cor}\label{corr.unip.G}
Let $G=\PSL_2(q)$ when $q$ is odd. Then, for any $\bar{A} \in G$,
$\bar{X}\bar{A}\bar{X}^{-1} \in G$. If moreover, $\bar{A}$ is
unipotent then it is $G$-conjugate to either $\bar{U}_1$ or
$\bar{U'}_1=\bar{X}\bar{U}_1\bar{X}^{-1}$. In addition,
\begin{itemize}
\item If $q \equiv 1 \bmod 4$ then $\bar{U}_1$ and $\bar{U}_{-1}$ are $G$-conjugate.
\item If $q \equiv 3 \bmod 4$ then $\bar{U}_1$ and $\bar{U'}_{-1}$ are $G$-conjugate.
\end{itemize}
\end{cor}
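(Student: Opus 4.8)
The plan is to derive everything directly from Proposition~\ref{prop.unip.G0} together with the elementary description of the unipotent conjugacy classes of $\SL_2(q)$. The first assertion is immediate: Proposition~\ref{prop.unip.G0} gives $XAX^{-1}\in G_0$ for every $A\in G_0=\SL_2(q)$, and reducing modulo $\{\pm I\}$ yields $\bar X\bar A\bar X^{-1}\in G$ for every $\bar A\in G$. For the second assertion I would fix a non-trivial unipotent $\bar A\in G$ and look at its two pre-images $A,-A\in G_0$; exactly one of them, say $A$, has $\tr(A)=2$ (the other then has trace $-2$), and by Proposition~\ref{prop.unip.G0} this $A$ is $G_0$-conjugate to $U_1$ or to $U'_1$. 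Projecting that conjugation to $G$ shows $\bar A$ is $G$-conjugate to $\bar U_1$ or to $\bar U'_1$.

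For the ``in addition'' part I would first record the standard fact that in $\SL_2(q)$ the matrices $\begin{pmatrix}1&1\\0&1\end{pmatrix}$ and $\begin{pmatrix}1&\mu\\0&1\end{pmatrix}$ are $G_0$-conjugate precisely when $\mu$ is a non-zero square in $\BF_q$: conjugating by $\mathrm{diag}(c,c^{-1})$ turns the first into the second with $\mu=c^2$, and that these two classes (square $\mu$, non-square $\mu$) exhaust the trace-$2$ unipotents is consistent with the count of $d=2$ unipotent classes in Table~\ref{table.elm.PSL}. Next I would use the two identities already present in the set-up: $U_{-1}=-U_1^{-1}$, so that $\bar U_{-1}$ is the image of $\begin{pmatrix}1&-1\\0&1\end{pmatrix}$; and $U'_{-1}=XU_{-1}X^{-1}=-\begin{pmatrix}1&-x^2\\0&1\end{pmatrix}$, so that $\bar U'_{-1}$ is the image of $\begin{pmatrix}1&-x^2\\0&1\end{pmatrix}$.

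Then I would split on $q \bmod 4$. If $q\equiv 1\bmod 4$ then $-1$ is a square in $\BF_q$, hence $\begin{pmatrix}1&-1\\0&1\end{pmatrix}$ is $G_0$-conjugate to $U_1$, and projecting gives that $\bar U_1$ and $\bar U_{-1}$ are $G$-conjugate. If $q\equiv 3\bmod 4$ then $-1$ is a non-square, and $x^2$ is also a non-square in $\BF_q$ (otherwise $x\in\BF_q$, contradicting $x\in\BF_{q^2}\setminus\BF_q$), so $-x^2$ is a square; hence $\begin{pmatrix}1&-x^2\\0&1\end{pmatrix}$ is $G_0$-conjugate to $U_1$, and projecting gives that $\bar U_1$ and $\bar U'_{-1}$ are $G$-conjugate.

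There is no serious obstacle: the whole argument is a translation of Proposition~\ref{prop.unip.G0} through the quotient $G_0\to G$. The only delicate points are the sign bookkeeping when passing between $\SL_2(q)$ and $\PSL_2(q)$ — in particular, noting that a non-trivial unipotent element of $G$ has one pre-image of trace $2$ and one of trace $-2$, so that the two cases of Proposition~\ref{prop.unip.G0} together account for all unipotents of $G$ — and the observation that $x^2$ is a non-square in $\BF_q$, which is exactly what makes the split-conjugacy criterion line up with the residue of $q$ modulo $4$.
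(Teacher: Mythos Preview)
Your argument is correct and is exactly the kind of derivation the paper intends: the corollary is stated in the paper without proof, immediately after Proposition~\ref{prop.unip.G0}, so the implicit ``proof'' is precisely the passage to the quotient $G_0\to G$ together with the square/non-square criterion for $G_0$-conjugacy of upper-triangular unipotents that you spell out. Your handling of the sign bookkeeping and of the fact that $x^2$ is a non-square in $\BF_q$ fills in all the details the paper leaves to the reader.
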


\section{Proof of the main results}
\subsection*{Proof of Theorem~\ref{thm.generate}}
The proof follows from the following
Proposition~\ref{prop.gen.semisimple},
Proposition~\ref{prop.gen.semisimple.triple},
Proposition~\ref{prop.gen.triple.ppp}, Remark~\ref{rem.22.333},
Proposition~\ref{prop.good.unip.triple} and Remark~\ref{rem.9.335}.

\subsection*{Proof of Theorem~\ref{thm.PSL.even}}
The proof follows from the following
Proposition~\ref{prop.gen.ss.ss}, Proposition~\ref{prop.semisimple},
Proposition~\ref{prop.even.unip.ss},
Proposition~\ref{prop.split.unip} and
Proposition~\ref{prop.non.split.even}.

\subsection*{Proof of Theorem~\ref{thm.PSL.odd}}
The proof follows from the following
Proposition~\ref{prop.gen.ss.ss}, Proposition~\ref{prop.odd.unip},
Proposition~\ref{prop.semisimple},
Proposition~\ref{prop.split.unip},
Proposition~\ref{prop.non.split.unip}, Corollary~\ref{cor.conj.inv},
Corollary~\ref{cor.odd.unip.ss} and
Proposition~\ref{prop.good.unip.triple}.

\subsection*{Proof of Corollary~\ref{cor.all.sizes}}
The proof follows from Theorem~\ref{thm.PSL.even},
Theorem~\ref{thm.PSL.odd}, Table~\ref{table.elm.PSL} and
\S\ref{sect.num}.

\begin{enumerate}
\item Let $G=\PSL_2(q)$ when $q$ is even, then $|G|=q(q^2-1)$.
\begin{itemize}
\item If $\cC$ is a conjugacy class of a semisimple split element then
\[
    |\cC|=q(q+1)\ and\ |\cC^2|=q(q^2-1).
\]
\item If $\cC$ is a conjugacy class of a semisimple non-split element then
\[
    |\cC|=q(q-1)\ and\ |\cC^2|=q(q^2-1)-(q^2-1)=(q-1)(q^2-1).
\]
\item If $\cC$ is a conjugacy class of a unipotent element then
\[
    |\cC|=q^2-1\ and\ |\cC^2|=q(q^2-1).
\]
\end{itemize}
\medskip
Now let $G=\PSL_2(q)$ when $q$ is odd, then $|G|=q(q^2-1)/2$.

\item If $\cC$ is a conjugacy class of semisimple element whose order is greater than
2 then
\[
    |\cC|=q(q\pm 1)\ and\ |\cC^2|=q(q^2-1)/2.
\]

\item If $\cC$ is a conjugacy class of semisimple element $x$ of order 2 then
\begin{itemize}
\item If $q \equiv 1 \bmod 4$ then $x$ is split and so
\[
    |\cC|=q(q+1)/2 \ and\ |\cC^2|=q(q^2-1)/2.
\]
\item If $q \equiv 3 \bmod 4$ then $x$ is non-split and so
\[
    |\cC|=q(q-1)/2 \ and\ |\cC^2|=q(q^2-1)/2-(q^2-1) = (q-2)(q^2-1)/2.
\]
\end{itemize}

\item If $\cC$ is a conjugacy class of a unipotent element, then by
Corollary~\ref{cor.num.elm.odd},
\[
    |\cC|=(q^2-1)/2\ and\ |\cC^2|=q(q^2-1)/2 - q(q^2-1)/8 -\eps = 3q(q^2-1)/8
    -\eps,
\]
where $\eps = \begin{cases} 1 & if\ q \equiv 1 \bmod 4 \\
0 & if\ q \equiv 3 \bmod 4.
\end{cases}$
\end{enumerate}

\subsection{Generation properties of $\cC$}\label{sect.gen}
\begin{prop}\label{prop.gen.semisimple}
Let $G=\PSL_2(q)$ when $q>3$ and let $\cC$ be a conjugacy class of a
semisimple element $s$ of order greater than $2$. Then $\cC$
contains two elements $x,y$ that generate $G$.
\end{prop}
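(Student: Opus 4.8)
\textbf{Proof proposal for Proposition~\ref{prop.gen.semisimple}.}
The plan is to realize $\cC$ as part of a generating pair by a careful choice of trace data, using Macbeath's classification (Theorems~\ref{thm.Mac}, \ref{thm.Mac.singular}) to rule out all proper subgroups. Write $n=|s|>2$, so $\cC$ has a well-defined trace $\al\in\Tr_q(n)$. The idea is to look for $y\in\cC$ with $\langle x,y\rangle=G$ where $x\in\cC$ is fixed; equivalently, using Theorem~\ref{thm.Mac}, to find a third trace $\ga\in\BF_q$ and a triple $(A,B,C)\in\SL_2(q)^3$ with $\tr(A)=\tr(B)=\al$, $\tr(C)=\ga$, $ABC=I$, such that the images of $A$ and $B$ lie in $\cC$ and generate $G$. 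The first step is to pick $\ga$ so that the triple $(\al,\al,\ga)$ is \emph{non-singular}, i.e. $2\al^2+\ga^2-\al^2\ga-4\neq 0$; since this is one polynomial condition on $\ga$ and $q>3$, such $\ga$ exists (in fact all but at most two values of $\ga$ work). By Theorem~\ref{thm.Mac.singular}, non-singularity guarantees $\langle \bar A,\bar B\rangle$ is \emph{not} a structural subgroup — so it is $G$ itself, a subfield subgroup, or a small subgroup.

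The second step is to eliminate the subfield and small subgroups, and here I would exploit the remaining freedom in choosing $\ga$. For the subfield subgroups $\PSL_2(q_1)$ or $\PGL_2(q_1)$ with $\BF_{q_1}\subsetneq\BF_q$: since $\al=\tr(s)\in\Tr_q(n)$ and $n$ need not be a $q_1$-order, one can often already exclude these from the value of $\al$ alone; when $\al$ does lie in a proper subfield, one arranges instead that $\ga\notin\BF_{q_1}$ for every proper subfield — there are only $O(\log q)$ maximal proper subfields, each contributing at most $q^{1/2}$ forbidden values of $\ga$, far fewer than $q$. For the small subgroups ($A_4$, $S_4$, $A_5$, dihedral — though dihedral is already excluded by non-singularity): these have bounded order, hence finitely many possible traces $\ga$ of their elements, so again all but boundedly many $\ga$ avoid them. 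Combining: the set of "bad" $\ga\in\BF_q$ (those making the triple singular, or forcing the group into a subfield or small subgroup) has size bounded by a constant plus $O(q^{1/2}\log q)$, which is $<q$ once $q$ is large enough; the finitely many small exceptional $q$ (with $q>3$) are checked directly, e.g.\ by hand or with Table~\ref{table.subgroups}. Picking any good $\ga$, the corresponding $A,B$ from Theorem~\ref{thm.Mac} give $\langle\bar A,\bar B\rangle=G$, and since $\tr(A)=\tr(B)=\al$ with $\bar A,\bar B\neq 1$ (as $n>2$ forces $\al\neq\pm 2$ only if $s$ is not unipotent — but a semisimple element of order $>2$ has $\al\neq\pm2$ unless... in any case $\al$ determines the single conjugacy class of order-$n$ elements with that trace up to the split/non-split dichotomy, which is fixed by $\al$), both $\bar A$ and $\bar B$ lie in $\cC$.

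One subtlety to address carefully: when $q$ is odd and $\al=0$ (so $n=2$) the class would split, but this case is excluded since $n>2$; more generally for a given semisimple $\al\neq\pm 2$ Table~\ref{table.elm.PSL} shows there is exactly one $G$-class with that trace, so $\bar A,\bar B\in\cC$ is automatic once $\tr(A)=\tr(B)=\al$ and the order is $n$ — and the order is forced to be $n$ because $\al\in\Tr_q(n)$ together with $\al\neq\pm2$ pins down $|\bar A|$ via the split/non-split type. The main obstacle I anticipate is the bookkeeping for small $q$: for $q\in\{4,5,7,8,9,11,13,\dots\}$ the asymptotic counting argument is too weak, so each such $q$ needs its conjugacy classes and maximal subgroups handled individually (using Table~\ref{table.subgroups} and, where convenient, the known structure of $\PSL_2(q)$ for small $q$, e.g.\ $\PSL_2(4)\cong\PSL_2(5)\cong A_5$, $\PSL_2(9)\cong A_6$). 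A secondary obstacle is making the exclusion of subfield subgroups fully rigorous: one must verify that when $\al$ itself forces membership in no proper subfield, the argument still goes through, and when it does not, the count of forbidden $\ga$ is genuinely $o(q)$ — this is where Proposition~\ref{prop.Tr.q.n} describing $\Tr_{q_1}(n)$ for subfields $\BF_{q_1}$ is the right tool.
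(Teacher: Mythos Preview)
Your approach is valid in outline but takes a genuinely different route from the paper. You argue by counting: the ``bad'' values of $\ga$ (those making $(\al,\al,\ga)$ singular, or lying in a proper subfield, or corresponding to an element order occurring in $A_4$, $S_4$, $A_5$) number $O(\sqrt{q}\log q)+O(1)$, so for large $q$ a good $\ga$ exists, and the finitely many small $q$ are to be checked directly. The paper instead makes one targeted choice: for $q\neq 5,7$ take $\ga\in\Tr_q((q+1)/d)$, so that $\bar C$ has order $(q+1)/d$. No proper subfield subgroup and no small subgroup of $G$ contains an element of this order, so once non-singularity (which the paper arranges by adjusting within $\Tr_q((q+1)/d)$) rules out structural subgroups, $\langle\bar A,\bar B\rangle=G$ follows immediately from Table~\ref{table.subgroups}. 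For $q=5,7$ the paper takes $\ga=-2$ (so $\bar C$ is unipotent of order $p$), and for $q=9$ it separately excludes $A_5$. This specific choice of $\ga$ eliminates both the asymptotic bookkeeping and nearly all of the small-$q$ casework that your method would need.

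Two details to tighten in your version. First, dihedral subgroups are \emph{not} structural in the paper's sense, so non-singularity does not exclude them directly; what actually happens is that two elements of order $n>2$ in a dihedral group lie in its cyclic rotation subgroup and hence generate a cyclic (structural) group, so the exclusion is indirect. Second, for $\PGL_2(q_1)$ subgroups (odd $q=q_1^{2m}$), elements outside $\PSL_2(q_1)$ lift to $\SL_2(q)$ with traces in a single $\BF_{q_1}$-coset inside $\BF_{q_1^2}$ rather than in $\BF_{q_1}$ itself, so ``$\ga\notin\BF_{q_1}$'' is not quite the right exclusion --- though the forbidden set still has size $O(q_1)$ and your asymptotics survive.
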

\begin{proof}
Let $s \in G$ be a semisimple element and let $\cC$ be the conjugacy
class of $s$. Let $S \in G_0=\SL_2(q)$ be the preimage of $s$.
Denote the order of $s$ by $n$ and $\al = \tr(S)$, thus $\al \notin
\{0, \pm 2\}$. Recall that $d=\gcd(2,q-1)$.

If $q\neq 5,7$ take some $\ga \in \Tr_q((q+1)/d)$. If
$(\al,\al,\ga)$ is \emph{singular} then $(2-\ga)(\al^2-\ga-2)=0$ and
so $\ga=\al^2-2$. Thus, we may replace $\ga$ by $-\ga$ (or by
another $\ga \in \Tr_q((q+1)/d)$) to obtain a non-singular triple
$(\al,\al,\ga)$. When $q=9$ one can moreover make sure that
$(\al,\al,\ga)$ does not correspond to a triple of matrices
$(A,B,C)$ with $ABC=I$ satisfying $\langle \bar A, \bar B \rangle
\cong A_5$ (see~\cite{LR2}). When $q=5$ or $7$ take $\ga=-2$, and
then the triple $(\al,\al,-2)$ is also not singular.

By Theorem~\ref{thm.Mac}, there exists a triple of matrices
$(A,B,C)$ such that $ABC=I$, $\tr(A)=\tr(B)=\al$ and $\tr(C)=\ga$.
Moreover, by Theorem~\ref{thm.Mac.singular}, $\langle \bar A, \bar B
\rangle$ is not a structural subgroup of $G$. By considering the
possible subgroups of $G$ detailed in Table~\ref{table.subgroups} we
see that the only non-structural subgroup containing $\bar C$ is $G$
itself, and hence $\langle \bar A, \bar B \rangle = G$.
\end{proof}

\begin{prop}\label{prop.gen.ss.ss}
Let $G=\PSL_2(q)$ when $q>3$.
\begin{itemize}
\item If $q$ is odd then any non-trivial element in $G$ can be written
as a product of two $G$-conjugate semisimple elements that generate
$G$;
\item If $q$ is even then only a semisimple element in $G$ can be written as a product of two $G$-conjugate
(semisimple) elements that generate $G$.
\end{itemize}
\end{prop}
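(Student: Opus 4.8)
The plan is to use Macbeath's trace parametrization (Theorems~\ref{thm.Mac} and~\ref{thm.Mac.singular}) together with Proposition~\ref{prop.Tr.q.n} to realize a prescribed element $c\in G$ as a product $c = xy$ with $x,y\in\cC$ and $\langle x,y\rangle = G$. Concretely, fix $C\in G_0=\SL_2(q)$ with $\bar C$ the given element and $\ga=\tr(C)$; the pair $(x,y)$ we want corresponds (after inverting) to a triple $(A,B,C)\in\SL_2(q)^3$ with $ABC=I$, $\tr(A)=\tr(B)=\al$ for some $\al\in\Tr_q(n)$, and $\tr(C)=\ga$. By Theorem~\ref{thm.Mac} such a triple always exists for any $\al$; by Theorem~\ref{thm.Mac.singular} it fails to generate a structural subgroup precisely when $(\al,\al,\ga)$ is non-singular; and, exactly as in the proof of Proposition~\ref{prop.gen.semisimple}, once $\langle\bar A,\bar B\rangle$ is non-structural and contains $\bar C$, an inspection of Table~\ref{table.subgroups} forces $\langle\bar A,\bar B\rangle = G$ as long as we can also avoid the small subgroups $A_4,S_4,A_5$. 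So the whole game reduces to choosing the semisimple trace $\al$ (equivalently, the order $n$ of the conjugating class $\cC$) so that $(\al,\al,\ga)$ is non-singular and $\langle \bar A,\bar B\rangle$ is not one of finitely many small groups.

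First I would dispose of the singularity condition. The triple $(\al,\al,\ga)$ is singular iff $2\al^2+\ga^2-\al^2\ga-4=0$, i.e. $(2-\ga)(\al^2-\ga-2)=0$, so $\ga=2$ (the unipotent/identity case) or $\ga=\al^2-2$. Thus for a fixed target $\ga$ there is at most one ``bad'' value of $\al^2$, hence at most two bad values of $\al$; since $|\Tr_q(n)|$ grows (and for $q$ odd $\Tr_q(n)$ is symmetric under $\al\mapsto-\al$), for $q$ large enough we can always pick $\al\in\Tr_q((q\pm1)/d)$ of order as large as we like and avoid the bad value. When $\ga=2$ (target unipotent or identity) the triple $(\al,\al,2)$ is automatically non-singular for $\al\ne\pm2$, which is the relevant sub-case for the ``any non-trivial element'' claim in odd characteristic. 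Second, I would handle the even-characteristic negative direction: if $x,y$ are $G$-conjugate with $\langle x,y\rangle = G$ and $xy$ is unipotent, then looking at the corresponding $(A,B,C)$ with $\tr(C)=0$ — wait, in characteristic $2$ a unipotent has trace $0$, but so do non-split semisimple involutions; the point is rather that one must show $xy$ cannot be unipotent, which by Theorem~\ref{thm.PSL.even}(1),(2) is exactly the statement that a product of two conjugates from a single class never hits the unipotent elements unless the class is unipotent, and a unipotent class squares to all of $G$ but then $xy$ ranges over everything — so the precise even-characteristic claim ``only a semisimple element can be so written'' needs the observation that if $x$ is unipotent then $x,y$ both lie in the (unique, since $d=1$) Sylow $p$-subgroup's worth of conjugates and $\langle x,y\rangle$ can be shown to be non-generating when $xy$ is unipotent; I would prove this by a direct matrix computation in a Borel subgroup, or cite that two unipotents with unipotent product lie in a common Borel.

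The main obstacle will be the small-subgroup bookkeeping for small $q$. For $q$ not too large, $\Tr_q((q\pm1)/d)$ is small and the available large orders are limited, so one cannot blithely ``take $\al$ of large order''; moreover even a non-singular triple $(\al,\al,\ga)$ with $\bar C\in\langle\bar A,\bar B\rangle$ could land in $A_4$, $S_4$, or $A_5$. The clean way around this is to choose $\al\in\Tr_q(n)$ with $n$ not dividing $12$, $24$, or $60$ when possible, so that $\bar A$ has order incompatible with any small subgroup; when $q$ is small enough that no such $n$ exists one must either use a subfield-subgroup argument (the order of $\bar C$ together with that of $\bar A$ cannot both live in a common proper subfield subgroup) or, as the paper does elsewhere, make an explicit choice and invoke the known lists of generating triples in $\PSL_2(q)$ for small $q$ (e.g.\ via~\cite{Mac,LR2}), with $q=9$ singled out as usual because of the exceptional $A_6\cong\PSL_2(9)$ phenomena. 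I expect the write-up to split into: (i) a generic argument for all sufficiently large $q$ via the singularity computation and an order count on $\Tr_q(n)$; (ii) the even case, additionally proving a product of two conjugate non-semisimple elements that generate $G$ is impossible by confining unipotent pairs with unipotent product to a Borel; (iii) a short finite check for the remaining small $q$, with $q=9$ handled separately.
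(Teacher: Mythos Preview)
Your overall strategy matches the paper's: use Macbeath's Theorems~\ref{thm.Mac} and~\ref{thm.Mac.singular}, analyze the singularity of $(\al,\al,\ga)$, and rule out small subgroups via Table~\ref{table.subgroups}. But two concrete steps in your sketch are wrong, and both matter.

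First, the even-characteristic negative direction. You correctly aim to show that if $xy$ is unipotent then $\langle x,y\rangle\ne G$, but then wander off into Borel subgroups and unipotent pairs. The point you are missing is immediate from your own singularity computation, done in characteristic $2$: with $\al=\be$ and $\ga=0$ the polynomial $\al^2+\be^2+\ga^2-\al\be\ga-4$ becomes $2\al^2-4=0$ in $\BF_q$, so $(\al,\al,0)$ is \emph{always} singular, for every $\al$. Hence by Theorem~\ref{thm.Mac.singular} any $x,y$ with $\tr(x)=\tr(y)$ and $xy$ unipotent generate a structural subgroup, and you are done. (Your side remark that in characteristic $2$ ``non-split semisimple involutions'' also have trace $0$ is off: in characteristic $2$ the involutions \emph{are} the unipotents, and a semisimple element has trace $a+a^{-1}=0$ only for $a=1$.) For the positive direction in even characteristic the paper simply picks $\al\in\Tr_q(q+1)$ with $\al^2\ne\ga$; then $(\al,\al,\ga)$ is non-singular and an element of order $q+1$ lies in no proper non-structural subgroup.

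Second, in the odd case you write that ``when $\ga=2$ the triple $(\al,\al,2)$ is automatically non-singular for $\al\ne\pm2$.'' This is backwards: your own factorization $(2-\ga)(\al^2-\ga-2)=0$ shows $(\al,\al,2)$ is singular for \emph{every} $\al$. The paper's fix is the one you omit: since a unipotent target has a preimage of trace $-2$ as well as $+2$, one replaces $\ga$ by $-\ga$. Then $(\al,\al,-2)$ is singular iff $4\al^2=0$, i.e.\ iff $\al=0$, so any semisimple $\al\ne0$ works. More generally the paper's manoeuvre for odd $q$ is: take $\al\in\Tr_q((q+1)/2)$; if $(\al,\al,\ga)$ happens to be singular (so $\ga=2$ or $\ga=\al^2-2$), pass to $-\ga$, which kills both bad cases at once. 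For $q=5,7$ the set $\Tr_q((q+1)/2)$ is too small for this to go through cleanly and the paper makes an ad hoc choice; for $q=9$ one must also avoid $A_5$, as you note.
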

\begin{proof}
Assume that $q$ is even. Let $Z\in G$ and denote $\ga = \tr(Z)$.
Assume that $Z=XY$ where $X$ is $G$-conjugate to $Y$, and let
$\al=\tr(X)=\tr(Y)$. If $Z$ is unipotent then $\ga=0$. Since
$(\al,\al,0)$ is \emph{singular} then by
Theorem~\ref{thm.Mac.singular} the subgroup $\langle X,Y \rangle$ is
a structural subgroup of $G$, and so $X$ and $Y$ cannot generate
$G$. If $Z$ is semisimple then $\ga\neq 0$, and we can choose some
$\al \in \Tr_q(q+1)$ such that $\ga \neq \al^2$ and then
$(\al,\al,\ga)$ is not singular.

Assume that $q$ is odd. Let $1 \neq z\in G$ and let $Z \in
G_0=\SL_2(q)$ be the preimage of $z$. Denote $\ga = \tr(Z)$. If $q
\neq 5,7$ take some $\al \in \Tr_q((q+1)/2)$. If $(\al,\al,\ga)$ is
\emph{singular} then $(2-\ga)(\al^2-\ga-2)=0$ and so $\ga=-2$ or
$\ga=\al^2-2$. Thus, we may replace $\ga$ by $-\ga$ to obtain a
non-singular triple $(\al,\al,\ga)$. When $q=9$ one can moreover
make sure that $(\al,\al,\ga)$ does not correspond to a triple of
matrices $(A,B,C)$ with $ABC=I$ satisfying $\langle \bar A, \bar B
\rangle \cong A_5$ (see~\cite{LR2}). When $q=5$ or $7$ take $\al
=-2$ (and $\ga \ne 2$) and then the triple $(-2,-2,\ga)$ is also not
singular.

By Theorem~\ref{thm.Mac}, there exists a triple of matrices
$(A,B,C)$ such that $ABC=I$, $\tr(A)=\tr(B)=\al$ and $\tr(C)=\ga$.
Moreover, by Theorem~\ref{thm.Mac.singular}, $\langle \bar A, \bar B
\rangle$ is not a structural subgroup of $G$. By considering the
possible subgroups of $G$ detailed in Table~\ref{table.subgroups} we
see that the only non-structural subgroup containing $\bar A$ is $G$
itself, and hence $\langle \bar A, \bar B \rangle = G$.
\end{proof}

\begin{prop}\label{prop.gen.semisimple.triple}
Let $G=\PSL_2(q)$ when $q>3$ and let $\cC$ be a conjugacy class of a
semisimple element $s$. Then $\cC$ contains three elements $x,y,z$
such that $xyz=1$ and $\langle x,y \rangle =G$ if and only if the
order of $s$ is \emph{$q$-minimal} and greater than $3$.
\end{prop}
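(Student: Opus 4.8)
The plan is to prove Proposition~\ref{prop.gen.semisimple.triple} by combining the trace machinery of Proposition~\ref{prop.Tr.q.n} with Macbeath's singularity criterion (Theorems~\ref{thm.Mac} and~\ref{thm.Mac.singular}) exactly as in the proofs of Propositions~\ref{prop.gen.semisimple} and~\ref{prop.gen.ss.ss}, but now I must control \emph{all three} traces simultaneously, since $x,y,z$ are required to lie in the same class $\cC$. So I set $n=|s|$, $\al=\tr(S)$ for a preimage $S\in\SL_2(q)$, and I look for a triple $(A,B,C)\in\SL_2(q)^3$ with $ABC=I$, all three traces equal to $\pm\al$ (the sign ambiguity coming from passing to $\PSL_2$ when $q$ is odd, cf.\ Table~\ref{table.elm.PSL}), such that $\langle\bar A,\bar B\rangle=G$. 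By Theorem~\ref{thm.Mac} such a matrix triple always exists once I fix the traces; by Theorem~\ref{thm.Mac.singular} the subgroup $\langle\bar A,\bar B\rangle$ is structural iff $(\al,\al,\pm\al)$ is singular, so I must arrange that it is non-singular, and then separately rule out the small subgroups $A_4,S_4,A_5$ and the subfield subgroups $\PSL_2(q_1),\PGL_2(q_1)$.

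First I would handle the singularity condition: the singular equation for $(\al,\al,\eps\al)$ with $\eps=\pm1$ reduces to a low-degree polynomial in $\al$ (something like $\al^2(2-\eps\al)-4+\eps\al=0$ up to sign, a cubic), so for all but a bounded number of values of $\al$ at least one sign choice $\eps$ gives a non-singular triple; the finitely many bad $\al$ correspond to bounded $q$ or to $n\le 3$, which I treat by hand. Second, the subfield-subgroup obstruction is precisely where the ``$q$-minimal'' hypothesis enters: if $n$ is \emph{not} $q$-minimal then there is a proper subfield $\BF_{q_1}\subset\BF_q$ with $\PSL_2(q_1)$ (or $\PGL_2(q_1)$) already containing an element of order $n$, and one can realise the whole triple inside that subfield subgroup --- so $\cC$ necessarily contains a non-generating triple with product~$1$ as well, but I still need that \emph{every} triple with the right traces generating $G$ is blocked, which forces the ``only if'' direction. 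Conversely, when $n$ \emph{is} $q$-minimal, $\al\notin\BF_{q_1}$ for every proper subfield (one has to check $\al=\tr$ of an order-$n$ element forces $\BF_q$ to be the minimal field, using Proposition~\ref{prop.Tr.q.n}), hence no subfield subgroup can contain $\bar A$; and $n>3$ with $n$ $q$-minimal forces the order to exceed the element orders available in $A_4,S_4,A_5$ except in a few explicit small cases ($n=4,5$ with $q=9,$ etc.), which I dispatch using the known triangle-group/character-theoretic data on generation of $A_5$ in $\PSL_2(9)$ as invoked in the earlier proofs. Combining: for $n$ $q$-minimal and $>3$ the only non-structural, non-small, non-subfield subgroup containing $\bar C$ is $G$ itself, so $\langle\bar A,\bar B\rangle=G$, giving the ``if'' direction.

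For the ``only if'' direction I would argue contrapositively in two cases. If $n\le 3$: $n=2$ gives $\al=0$, and $(0,0,0)$ is singular, so by Theorem~\ref{thm.Mac.singular} any such triple generates only a structural subgroup; $n=3$ gives $\al=\pm1$ (by Proposition~\ref{prop.Tr.q.n}/the remark that $\Tr_q(3)=\{\pm1\}$), and checking the singular condition for $(1,1,\pm1)$ and $(1,1,\mp1)$... wait, one finds the relevant triples are either singular or generate $A_4$/$A_5$-type subgroups --- this needs the explicit identity that $(\pm1,\pm1,\pm1)$ either is singular or has $\al$-values realising order-$3$ elements only inside small subgroups, which I verify directly since $|\al|$ is tiny. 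If $n$ is not $q$-minimal, write $\BF_q=\BF_{q_1}^{?}$ with $\PSL_2(q_1)$ or $\PGL_2(q_1)<G$ containing an order-$n$ element; then by Macbeath's theorem applied over $\BF_{q_1}$ I get a triple inside the subfield subgroup with the prescribed traces, but more to the point I need that \emph{no} choice of matrix triple with these traces escapes: here I use that the traces $\pm\al$ themselves lie in $\BF_{q_1}$ (since $\al$ is a trace of an order-$n$ element and $\BF_{q_1}$ already contains such elements, so $\al=a+a^{-1}$ or $a+a^q$ with the root in the smaller field), hence $\bar C$ --- and by symmetry $\bar A,\bar B$ --- always lies in a conjugate of the subfield subgroup, which is proper. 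That last point is the main obstacle: I must show carefully that a fixed trace $\al\in\Tr_q(n)$ with $n$ non-$q$-minimal automatically forces $\al\in\Tr_{q_1}(n)$ for the relevant subfield, which is a statement about primitive roots of unity of order $n$ (resp.\ $2n$) descending to $\BF_{q_1}$ (resp.\ $\BF_{q_1^2}$) --- essentially a restatement of Definition~\ref{defn.q.min.order} --- and then conclude that every element of the corresponding $\SL_2$-conjugacy class is conjugate into $\SL_2(q_1)\cdot\{\pm I\}$, so $\langle\bar A,\bar B\rangle$ can never be all of $G$.
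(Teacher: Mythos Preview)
Your overall strategy matches the paper's: use Macbeath's trace criterion to produce a non-singular triple $(\al,\al,\al)$ when $n>3$ is $q$-minimal, then eliminate structural, small, and subfield subgroups. But two steps in your ``only if'' direction are not correct as written.

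\textbf{The case $n=3$.} You assert that the triples $(\pm1,\pm1,\pm1)$ are ``either singular or generate $A_4$/$A_5$-type subgroups''. This is not how the argument goes: for odd $q$ the triple $(1,1,1)$ is \emph{not} singular, since $1+1+1-1-4=-2\ne 0$, so Theorem~\ref{thm.Mac.singular} tells you only that the subgroup is non-structural --- it could in principle be a subfield subgroup, not merely $A_4$ (and it certainly cannot be $A_5$, which is not a quotient of a $(3,3,3)$ group). The paper instead uses the global fact (Remark~\ref{rem.22.333}) that the triangle group $\langle x,y\mid x^3=y^3=(xy)^3=1\rangle$ is solvable, so any finite group generated by such a triple is solvable and hence is never $\PSL_2(q)$ for $q>3$. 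This disposes of $n=3$ in one line, independent of singularity.

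\textbf{The non-$q$-minimal case.} Your final inference --- ``every element of the corresponding $\SL_2$-conjugacy class is conjugate into $\SL_2(q_1)\cdot\{\pm I\}$, so $\langle\bar A,\bar B\rangle$ can never be all of $G$'' --- is a non sequitur. Knowing that $\bar A$, $\bar B$, $\bar C$ are each \emph{individually} conjugate into some copy of $\PSL_2(q_1)$ does not prevent the pair $(\bar A,\bar B)$ from generating $G$; you need them to lie in a \emph{common} conjugate. What is actually true (and what the paper simply cites from \cite{Mac} and \cite{LR1}) is the stronger statement: if the three traces $\tr(A),\tr(B),\tr(C)$ all lie in a proper subfield $\BF_{q_1}$, then the triple $(A,B,C)$ with $ABC=I$ is \emph{simultaneously} conjugate into $\SL_2(q_1)$ (up to $\pm I$). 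This is a genuine theorem, not a consequence of the conjugacy-class bookkeeping you sketch.

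Two smaller points. First, for the ``if'' direction you do not need the sign ambiguity $(\al,\al,\pm\al)$ at all: taking $(\al,\al,\al)$, the singular polynomial factors as $(\al-2)^2(\al+1)$, which is nonzero precisely because $n>3$ forces $\al\notin\{0,\pm1,\pm2\}$. Second, you do not address why $\langle\bar A,\bar B\rangle$ cannot be $\PGL_2(q_1)$; the paper's observation is that if $x,y,z\in\PGL_2(q_1)$ with $xyz=1$ generate $\PGL_2(q_1)$, then exactly two of them lie in the nontrivial coset of $\PSL_2(q_1)$, so the three cannot all be conjugate.
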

\begin{proof}
Let $s \in G$ be a semisimple element and let $\cC$ be the conjugacy
class of $s$. Let $S \in G_0=\SL_2(q)$ be the preimage of $s$.
Denote the order of $s$ by $n$ and $\al = \tr(S)$.

If $x,y,z$ are three elements of order $2$ (respectively, $3$) such
that $xyz=1$, then any finite group generated by $x$ and $y$ is
necessarily abelian (respectively, solvable) and so $x$ and $y$
cannot generate $G$ (see Remark~\ref{rem.22.333}).

If $n$ is not $q$-minimal then $\al$ belongs to some proper subfield
$\BF_{q_1}$ of $\BF_q$. In this case, for any three elements
$x,y,z\in \cC$ satisfying $xyz=1$, the subgroup $\langle x,y
\rangle$ is necessarily contained in a proper subgroup of $G$
isomorphic to $\PSL_2(q_1)$ (see~\cite{Mac} and~\cite{LR1}).

If $n>3$ is $q$-minimal then $\al \notin \{0,\pm 1,\pm 2\}$ and
hence $(\al-2)^2(\al+1)\ne 0$ implying that the triple
$(\al,\al,\al)$ is non-singular. If $q>5$ and $n=5$ one can moreover
make sure that the triple $(\al,\al,\al)$ does not correspond to a
triple of matrices $(A,B,C)$ with $ABC=I$ satisfying $\langle \bar
A, \bar B \rangle \cong A_5$ (see~\cite{LR2}).

Therefore, by Theorems~\ref{thm.Mac} and~\ref{thm.Mac.singular},
there exists a triple of matrices $(A,B,C)$ such that $ABC=I$,
$\tr(A)=\tr(B)=\tr(C)=\al$ and $\langle \bar A, \bar B \rangle$ is a
subfield subgroup of $G$. By the $q$-minimality of $n$, $s$ does not
belong to any subgroup isomorphic to $\PSL_2(q_1)$ for some proper
subfield $\BF_{q_1} \subset \BF_q$. In addition, if $\langle x,y
\rangle = \PGL_2(q_1)$ and $xyz=1$ then exactly two of $x,y,z$
belong to $\PGL_2(q_1) \setminus \PSL_2(q_1)$ and the third one
belongs to $\PSL_2(q_1)$, so $x,y,z$ cannot all have the same order
$n$. Therefore, $\langle \bar A, \bar B \rangle$ cannot generate a
subgroup isomorphic to $\PGL_2(q_1)$, hence $\langle \bar A, \bar B
\rangle=G$ as needed.
\end{proof}

\begin{prop}\label{prop.gen.triple.ppp}
Let $G=\PSL_2(q)$ where $q$ is a prime power. Then there exist three
$G$-conjugate unipotent elements $A$, $B$ and $C$ such that $ABC=1$
and $\langle A, B \rangle=G$ if and only $q>3$ is prime.
\end{prop}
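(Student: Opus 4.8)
The plan is to analyze when three conjugate unipotent elements with product $1$ can generate $\PSL_2(q)$, using Macbeath's machinery. A unipotent element has trace $\pm 2$, so in $\SL_2(q)$ the relevant triple of traces is $(\alpha,\beta,\gamma)$ with each of $\alpha,\beta,\gamma$ equal to $\pm 2$; since in $G=\PSL_2(q)$ the sign is irrelevant, I may as well work with the trace triple $(2,2,2)$ (adjusting signs of the matrix lifts as needed, using Corollary~\ref{corr.unip.G} in the odd case to match the conjugacy classes). First I would check the singularity condition from Definition~\ref{defn.singular}: for $(\alpha,\beta,\gamma)=(2,2,2)$ we get $4+4+4-8-4=0$, so the triple is \emph{singular}. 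By Theorem~\ref{thm.Mac.singular} this means that whenever $ABC=I$ with all traces $2$, the group $\langle\bar A,\bar B\rangle$ is a \emph{structural} subgroup — contained in a Borel or in a cyclic subgroup. Hence three unipotents with product $1$ can never generate $\PSL_2(q)$ through this route, and in particular the ``only if'' direction is almost immediate: a structural subgroup is never all of $G$. But this seems to prove too much (it would say there is never such a triple), so the subtlety is that Macbeath's theorem produces \emph{one} particular triple of matrices with those traces, and I must be careful about which unipotent conjugacy classes actually arise.

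So the real content is the following reformulation. In $\SL_2(q)$ there is essentially one conjugacy class of each nontrivial unipotent type (in the odd case, two classes $U_1$ and $U_1'$, distinguished by whether the off-diagonal entry is a square); the Borel subgroup $B$ (upper triangular matrices mod center) contains all of them, since inside $B$ one can conjugate a unipotent to scale its off-diagonal entry by any square, but not by a non-square. The point is: the product of two unipotent upper-triangular matrices $\begin{pmatrix}1&s\\0&1\end{pmatrix}\begin{pmatrix}1&t\\0&1\end{pmatrix}=\begin{pmatrix}1&s+t\\0&1\end{pmatrix}$ is again unipotent, so inside a single Borel we always have plenty of unipotent triples with product $1$ — but they generate only an abelian group. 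To generate $G$ we need $A$, $B$ not to lie in a common Borel, and the singularity obstruction says that is impossible when all three products of pairs... wait — Theorem~\ref{thm.Mac.singular} concerns the specific $(A,B)$ coming from a given singular trace triple, and for a singular triple there can be several $\SL_2(q)$-orbits of such $(A,B,C)$, some structural and some not. So I would instead argue directly: examine when the \emph{variety} of triples $(A,B,C)$ with $\tr A=\tr B=\tr C=2$, $ABC=I$, and $\langle\bar A,\bar B\rangle=G$ is nonempty.

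Here is the direct approach. Suppose $A,B,C$ unipotent, $ABC=I$, and $H=\langle\bar A,\bar B\rangle$. By Dickson's subgroup classification (Table~\ref{table.subgroups}), a subgroup of $\PSL_2(q)$ generated by two unipotent elements lying in different Borels is either all of $G$, or a subfield subgroup $\PSL_2(q_1)$ (note $\PGL_2(q_1)$ has no unipotent elements outside $\PSL_2(q_1)$ when we are in the defining characteristic, and the $A_4,S_4,A_5$ cases have elements of order $p$ only when $p\le 5$, which I treat separately). If $q=p^e$ with $e>1$, then two unipotents of $G$ always generate a subgroup defined over $\mathbb F_p$, because the unipotents $\bar A,\bar B$, after conjugating $\bar A$ to $\begin{pmatrix}1&1\\0&1\end{pmatrix}$ and noting $\bar B$ is unipotent, generate — by the work of Macbeath and the subfield-subgroup argument already invoked in Proposition~\ref{prop.gen.semisimple.triple} — a conjugate of $\PSL_2(\mathbb F_p(\text{entries}))$; but actually the cleanest statement is: two transvections in $\mathrm{SL}_2$ over $\mathbb F_q$ generate $\mathrm{SL}_2$ of the subfield generated by the ``cross-ratio'' of their fixed points together with $\mathbb F_p$, and one shows this subfield can be forced to be $\mathbb F_p$ but never larger in a way that... no: the subfield can be arranged to be all of $\mathbb F_q$ only — hmm. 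The honest statement I would prove: if $q$ is not prime, every pair of unipotents in $G$ lies in a proper subgroup, namely a $\PSL_2(p)$-type subgroup when the relevant subfield is $\mathbb F_p$, OR a Borel; in any case not $G$. Conversely if $q=p>3$ is prime, there are no proper subfield subgroups, and I exhibit an explicit triple: take $\bar A,\bar B$ two transvections whose axes generate enough of the geometry, use Macbeath's Theorem~\ref{thm.Mac} to get $A,B,C\in\SL_2(p)$ with all traces $2$ and $ABC=I$, and check directly (as in Proposition~\ref{prop.gen.semisimple.triple}) that the only subgroups surviving are $G$ and the small subgroups $A_4,S_4,A_5$; these are excluded since $p>3$ means order-$p$ elements have $p\ge 5$, and a group generated by two elements of order $p\ge5$ with product of order $p$ cannot be $A_4$ ($\exp=6$), $S_4$ ($\exp=12$), or $A_5$ ($\exp=30$) unless $p=5$, where one checks $A_5$ has no such generating triple of order-$5$ elements with product $1$ except... and $A_5=\PSL_2(5)\cong\PSL_2(4)$ itself is fine. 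Finally $q=2,3,4$ are ruled out by hand: $\PSL_2(2)\cong S_3$ and $\PSL_2(3)\cong A_4$ are solvable-ish and fail, and $\PSL_2(4)\cong A_5$ has unipotents of order $2$, and three involutions with product $1$ generate only a dihedral (in fact Klein four or $S_3$) group, never $A_5$.

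The main obstacle will be pinning down the ``$q$ not prime'' direction cleanly: one must show that \emph{no} pair of unipotent elements with unipotent product generates $G$ when $q=p^e$, $e>1$. The key fact is that two transvections in $\SL_2$ over a field, if they do not share a fixed point, generate $\SL_2$ of the prime subfield (when their ``multipliers'' can be normalized to $1$, which they can after scaling) — more precisely, after conjugating so that $A=\begin{pmatrix}1&1\\0&1\end{pmatrix}$ and $B=\begin{pmatrix}1&0\\ \lambda&1\end{pmatrix}$, we have $\langle A,B\rangle\supseteq\SL_2(\mathbb F_p(\lambda))$, and since $ABC=I$ forces $\tr(C)=\tr((AB)^{-1})=2$, i.e. $\tr(AB)=\lambda+2=\pm2$, so $\lambda=0$ (giving an abelian group) or $\lambda=-4$ (giving $\mathbb F_p(\lambda)=\mathbb F_p$). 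Either way $\langle\bar A,\bar B\rangle\le\PSL_2(p)<G$ when $e>1$. This computation — that the unipotent-triple condition forces the off-diagonal parameter into the prime field — is exactly the heart of the matter, and I would present it as the central lemma of the proof.
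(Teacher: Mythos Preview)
Your ``only if'' direction is actually cleaner than the paper's: the paper simply cites \cite{Mac,LR1} for the fact that three elements of order $p$ with product $1$ generate at most a $\PSL_2(p)$-subgroup, whereas your explicit normalization $A=\begin{pmatrix}1&1\\0&1\end{pmatrix}$, $B=\begin{pmatrix}1&0\\\lambda&1\end{pmatrix}$ and the observation that $\tr(AB)=\lambda+2\in\{\pm2\}$ forces $\lambda\in\{0,-4\}\subset\mathbb F_p$ is exactly the content of that citation, made transparent.

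The ``if'' direction, however, has a real gap, and it is precisely the point that confused you at the start. You propose to ``use Macbeath's Theorem to get $A,B,C\in\SL_2(p)$ with all traces $2$ and $ABC=I$'' and then rule out structural subgroups. But $(2,2,2)$ \emph{is} singular, as you computed, so Theorem~\ref{thm.Mac.singular} tells you nothing useful there. The paper sidesteps this by lifting to trace $-2$ instead: it takes $A=U_{-1}$, $B=MU_{-1}M^{-1}$, $C=KU_{-1}K^{-1}$ for suitable explicit $M,K\in\SL_2(p)$, so that all three are $G_0$-conjugate to $U_{-1}$ \emph{by construction} and the trace triple is $(-2,-2,-2)$, for which $4+4+4-(-8)-4=16\ne0$ in odd characteristic. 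Non-singularity then excludes structural subgroups, and since $q=p$ is prime there are no proper subfield subgroups to worry about. Your own $\lambda=-4$ case is in fact the same phenomenon in disguise --- it gives trace triple $(2,2,-2)$, equally non-singular --- but you did not recognize it as such, and you never returned to it for the existence argument.

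There is a second, related omission: when $q$ is odd there are \emph{two} unipotent $G$-classes, and the statement demands that $A,B,C$ lie in the \emph{same} one. Your $\lambda=-4$ triple has $\bar A,\bar B$ visibly $G$-conjugate (since $-4=(2i)^2\cdot 1$ with $4$ a square), but you would still need to check that $\bar C$ lands in that same class; the paper avoids this entirely by writing $B$ and $C$ as explicit $G_0$-conjugates of $A$. So: keep your $\lambda$-argument for necessity, but for sufficiency either switch to the trace triple $(-2,-2,-2)$ and quote non-singularity, or exhibit $B$ and $C$ directly as conjugates of $A$ as the paper does.
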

\begin{proof}
If $A,B,C \in G$ are three elements of prime order $p$ such that
$ABC=1$ then $\langle A,B \rangle$ is contained in a subgroup
isomorphic to $\PSL_2(p)$, implying that necessarily $q=p>3$ is
prime (see~\cite{Mac} and~\cite{LR1}).

Now, choose some $a \in \BF_p \setminus \{0,\pm 2\}$ and consider
the following matrices in $\SL_2(p)$:
$$ M=\begin{pmatrix} a +1 & -a/2-1 \\ 2 & -1
\end{pmatrix}, \quad
K=\begin{pmatrix} a & -1/2 \\ 2 & 0
\end{pmatrix}.
$$
Let $A=U_{-1}\in G_0$ be as in \S\ref{sect.unip.elm},
$B=MU_{-1}M^{-1}$ and $C=KU_{-1}K^{-1}$, then $ABC=I$.

As $\tr(U_{-1})=-2$ and $(-2,-2,-2)$ is a \emph{non-singular}
triple, then by Theorem~\ref{thm.Mac.singular}, the subgroup
generated by the images of $A$ and $B$ is not a structural subgroup
of $G$. By considering the possible subgroups of $G$ detailed in
Table~\ref{table.subgroups} we conclude that $\langle \bar A, \bar B
\rangle = G$, as needed
\end{proof}

\begin{rem}\label{rem.22.333}
Recall that any finite group generated by two non-commuting
involutions is dihedral. In addition, the group presented by
$\langle x,y|\ x^3=y^3=(xy)^3 \rangle$ is an infinite solvable group
(see e.g.~\cite{CM}).
\end{rem}

\subsection{Basic properties of $\cC^2$}
\begin{lemma}\label{lem.conj.tr}
Let $A,B\in G_0=\SL_2(q)$ be two non-central matrices such that
$\tr(A)=\tr(B)$ or $\tr(A)=-\tr(B)$. Let $\cC$ be some conjugacy
class in $G=\PSL_2(q)$. If $\bar A \in \cC^2$ then also $\bar B \in
\cC^2$.
\end{lemma}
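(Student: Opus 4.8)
The plan is to reduce the statement to two facts about $\SL_2(q)$: first, that an element $\bar A \in G$ lies in $\cC^2$ if and only if there is a factorization coming from a triple of matrices with prescribed traces in $\SL_2(q)$; and second, that the set of admissible trace-triples $(\al,\al,\tr(C))$ with $\langle \bar A,\bar B\rangle$ having the right structure is insensitive to replacing $\tr(C)$ by $-\tr(C)$, together with the fact that every non-central matrix in $\SL_2(q)$ is determined up to $G_0$-conjugacy by its trace (with the single proviso, in odd characteristic, for unipotents, that there are two classes per unipotent trace, which is exactly why one argues through Proposition~\ref{prop.unip.G0} and Corollary~\ref{corr.unip.G} rather than through traces alone).

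Concretely: suppose $\bar A\in\cC^2$, say $\bar A=\bar X\bar Y$ with $\bar X,\bar Y\in\cC$, and let $\al=\tr(X)=\pm\tr(Y)$ (choosing preimages $X,Y\in G_0$ so that $\tr(X)=\tr(Y)=\al$ is possible since $X$ and $Y$ are conjugate). Lifting, $XY$ equals $A$ or $-A$ in $G_0$, so $\tr(XY)\in\{\pm\tr(A)\}=\{\pm\tr(B)\}$. Now I invoke Theorem~\ref{thm.Mac}: for the trace-triple $(\al,\al,-\tr(XY))$, or equivalently for $(\al,\al,\tr(XY))$, there is a matrix triple $(X',Y',Z')$ with $X'Y'Z'=I$ realizing these traces, and the key point is that the conjugacy class of the pair $(\bar X',\bar Y')$ — in particular whether it generates $G$ or which subgroup it lands in — depends only on the singularity/non-singularity of the trace-triple (Theorem~\ref{thm.Mac.singular}) and on the conjugacy data encoded by the three traces. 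Since $\al^2+\al^2+\ga^2-\al^2\ga-4$ is unchanged under $\ga\mapsto-\ga$ only when... — here I must be slightly careful — so instead I argue directly: the set $\Tr_q(n)$ for the common order $n$ of elements of $\cC$ is closed under negation when $q$ is odd, and when $q$ is even $\al$ is the unique trace of that order or $\al=\tr(X)$ with no sign ambiguity; hence replacing $A$ by $-A$ (i.e.\ passing from $\tr A$ to $-\tr A$) and correspondingly replacing each conjugate factor by its negative produces another valid factorization of the same coset, so $\bar B\in\cC^2$ whenever $\tr(B)=-\tr(A)$, and trivially when $\tr(B)=\tr(A)$ (as then $B$ is $G_0$-conjugate to $A$ or $-A$, again by Table~\ref{table.elm.PSL} and Proposition~\ref{prop.unip.G0}, so $\bar B$ is $G$-conjugate to $\bar A$).

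The main obstacle I anticipate is the odd-characteristic unipotent case, where $\tr A=\pm2$ does \emph{not} determine the $G_0$-class: there are two unipotent classes for each of $\tr=2$ and $\tr=-2$, interchanged by conjugation by $\bar X$ (Proposition~\ref{prop.unip.G0}, Corollary~\ref{corr.unip.G}). So if $\bar A$ is unipotent with, say, $\bar A\sim\bar U_1$, I cannot conclude directly that a unipotent $\bar B$ with $\tr B=-2$ lies in $\cC^2$ just from the trace. The fix is to use Corollary~\ref{corr.unip.G}: conjugating a factorization $\bar A=\bar X\bar Y$ by $\bar Z$ (for suitable $\bar Z\in G$) sends $\bar X,\bar Y$ to elements still in $\cC$ (as $\cC$ is a union... no — as $\cC$ is a single $G$-class, conjugation fixes it setwise) and sends $\bar A$ to $\bar Z\bar A\bar Z^{-1}$, which ranges over the whole $G$-class of $\bar A$; and separately, conjugating by $\bar X$ (the element from Proposition~\ref{prop.unip.G0}/Corollary~\ref{corr.unip.G}, regrettably clashing in name with our factor $\bar X$) moves between the two unipotent classes of a given sign, while the relation $\bar U_1\sim\bar U_{-1}$ or $\bar U_1\sim\bar U'_{-1}$ (depending on $q\bmod 4$) moves between signs. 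Chasing these identifications shows that if one unipotent class meets $\cC^2$ then so does every unipotent class with trace $\pm2$, i.e.\ every unipotent $\bar B$ has $\bar B\in\cC^2$, which is precisely the claim in that case. For semisimple $\bar B$ the argument is the clean trace argument of the previous paragraph, with no class ambiguity.
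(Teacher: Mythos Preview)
Your overall shape---conjugate the given factorization, and for the odd-characteristic unipotent case use the outer conjugation by $\bar X$ from Proposition~\ref{prop.unip.G0}/Corollary~\ref{corr.unip.G}---is the same as the paper's. But there is a concrete gap, and one key observation you never state.

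The gap is the case where $q$ is odd, $\bar A$ (hence $\bar B$) is unipotent, \emph{and} $\cC$ itself is a unipotent class. Your ``chasing these identifications'' relies on conjugating the factorization $\bar A = z\cdot gzg^{-1}$ by the outer element $\bar X$ to move $\bar A$ to the other unipotent $G$-class. But conjugation by $\bar X$ also moves the factors: when $z\in\cC$ is unipotent, $\bar X z \bar X^{-1}$ lies in the \emph{other} unipotent class $\cC'\ne\cC$, so what you get is $\bar X\bar A\bar X^{-1}\in(\cC')^2$, not $\cC^2$. The paper does not try to salvage this; it simply splits off this case and forward-references Proposition~\ref{prop.odd.unip}, which shows directly (by writing down explicit products inside a single unipotent subgroup) that a unipotent $\cC$ already has $\cC^2$ containing all unipotents.

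The missing observation, which is exactly what makes the outer-$\bar X$ trick work in the remaining case, is this: if $\cC$ is the class of a \emph{semisimple} element then conjugation by $X$ preserves $\cC$, because $XZX^{-1}\in G_0$ has the same trace as $Z$ and semisimple $G$-classes are determined by trace (Table~\ref{table.elm.PSL}). You gesture at the conclusion but never supply this reason; without it, ``conjugation fixes $\cC$ setwise'' is only justified for inner conjugation, which does not move $\bar A$ off its own $G$-class. Finally, the material on Theorem~\ref{thm.Mac}, singularity of trace-triples, and generation is irrelevant to this lemma and should be cut: the lemma is purely about membership in $\cC^2$, not about what the factors generate.
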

\begin{proof}
If $\bar A \in \cC^2$ then $\bar A = z(gzg^{-1})$ for some $z \in
\cC$, $g\in G$.

If $q$ is even, or if $q$ is odd and $\tr(A) \ne \pm 2$, then
necessarily $\bar B=h \bar A h^{-1}$ for some $h \in G$ (see
Table~\ref{table.elm.PSL}), and so $\bar B = (hzh^{-1}) (hgzg^{-1}
h^{-1}) \in \cC^2$.

Assume that $q$ is odd, $z$ is semisimple and $\bar A$ is unipotent.
Let $Z \in G_0$ be the preimage of $z$. Then by
Corollary~\ref{corr.unip.G}, either $\bar B=h \bar A h^{-1}$ or
$\bar B=h \bar X A \bar X^{-1} h^{-1}$ for some $h \in G$ and a
matrix $X \in \SL_2(q^2)$ as in \S\ref{sect.unip.elm}. Moreover by
Proposition~\ref{prop.unip.G0}, $XZX^{-1} \in G_0$, and since $\tr(X
Z X^{-1})=\tr(Z)$ then $\bar X z \bar X^{-1} \in \cC$ (see
Table~\ref{table.elm.PSL}). Similarly, $\bar X gzg^{-1} \bar X^{-1}
\in \cC$.

Hence, either $$\bar B = h \bar A h^{-1} = (hzh^{-1}) (hgzg^{-1}
h^{-1}) \in \cC^2$$ or $$\bar B = h \bar X \bar A \bar X^{-1} h^{-1}
= (h \bar X z \bar X^{-1} h^{-1}) (h \bar X gzg^{-1} \bar X^{-1}
h^{-1}) \in \cC^2.$$

If $q$ is odd and $z$ is unipotent, then the following
Proposition~\ref{prop.odd.unip} shows that $\cC^2$ contains all the
unipotent elements in $G$, as needed.
\end{proof}

\begin{prop}\label{prop.odd.unip}
Let $G=\PSL_2(q)$ when $q$ is odd and let $\cC$ be the $G$-conjugacy
class of a unipotent element $x$. Then $\cC^2$ contains all the
unipotent elements in $G$.

Moreover, $1 \in \cC^2$ if and only if $q \equiv 1 \bmod 4$.
\end{prop}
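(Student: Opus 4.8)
The plan is to work with explicit unipotent matrices in $G_0 = \SL_2(q)$ and compute products directly. Recall from \S\ref{sect.unip.elm} and Corollary~\ref{corr.unip.G} that every unipotent element of $G$ is $G$-conjugate to $\bar U_1$ or to $\bar U'_1 = \bar X \bar U_1 \bar X^{-1}$, and that $\bar U_1$ is $G$-conjugate to $\bar U_{-1}$ (if $q\equiv 1\bmod 4$) or to $\bar U'_{-1}$ (if $q\equiv 3\bmod 4$). So without loss of generality $\cC$ is the class of $\bar U_1$. The first step is to show that $\cC^2$ contains a representative of \emph{each} of the two unipotent classes together with, in the case $q\equiv 1\bmod 4$, the identity. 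For the identity: if $q\equiv 1\bmod 4$ then $\bar U_1$ and $\bar U_{-1}$ are $G$-conjugate, and $U_1 U_{-1} = U_1(-U_1^{-1}) = -I$, which is the identity in $G=\PSL_2(q)$; hence $1\in\cC^2$. Conversely, if $q\equiv 3\bmod 4$ then $\bar U_1$ is conjugate to $\bar U'_{-1}$ but not to $\bar U_{-1}$, and any product of two conjugates of $U_1$ reducing to $\pm I$ in $\SL_2(q)$ would force one factor to be $\pm$ the inverse of the other, i.e.\ force $\bar U_1$ to be $G$-conjugate to $\bar U_{-1}$, a contradiction; so $1\notin\cC^2$.

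Next I would produce the unipotent elements of $\cC^2$ by a trace computation in the spirit of Macbeath. Pick a conjugate $V = g U_1 g^{-1}$ and compute $\tr(U_1 V)$; as $g$ ranges over $\SL_2(q)$ this trace takes a range of values, and I want to hit both $2$ and $-2$ (to land on both unipotent classes) while controlling \emph{which} of the two classes of a given trace I land in. Concretely, take $V_t = \begin{pmatrix} 1 & 0 \\ t & 1\end{pmatrix}$, a conjugate of $U_1$ for every $t\in\BF_q^*$; then $U_1 V_t = \begin{pmatrix} 1+t & 1 \\ t & 1\end{pmatrix}$ has trace $2+t$. Choosing $t=0$ is degenerate, so instead combine this with conjugating $V$ by a diagonal matrix: $\begin{pmatrix} s & 0 \\ 0 & s^{-1}\end{pmatrix} V_t \begin{pmatrix} s^{-1} & 0 \\ 0 & s\end{pmatrix} = V_{ts^{-2}}$, which shows $\tr(U_1 V)$ only depends on $t$ up to squares — I will instead use a genuinely different conjugate. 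The cleanest choice: for $b\in\BF_q$ let $W_b = U_1 \begin{pmatrix} 1 & 0 \\ b & 1\end{pmatrix} U_1^{-1}$-type conjugates, or more simply compute $U_1 \cdot \begin{pmatrix} 1-bc & b^2 \\ -c^2 & 1+bc\end{pmatrix}$ where the second factor is the generic conjugate of $U_1$ with trace $2$, and read off when the product is unipotent and which class it is in (using that the two $G$-classes of trace $2$ are distinguished by whether the upper-right entry, after conjugating to upper-triangular form, is a square in $\BF_q^*$, cf.\ Proposition~\ref{prop.unip.G0}). By a free choice of the parameters I can arrange the product to be $U_1$ itself, $U'_1$, $U_{-1}$, or $U'_{-1}$ as needed; since $\{U_1,U'_1\}$ already represents both unipotent $G$-classes, this gives all unipotent elements of $G$ inside $\cC^2$.

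The main obstacle is the bookkeeping in the second step: distinguishing the two $G$-conjugacy classes among matrices of trace $\pm 2$ requires tracking a square-class invariant through the product, and one must check that the two-parameter family of products $U_1 (g U_1 g^{-1})$ genuinely realizes \emph{both} square classes for \emph{both} traces $+2$ and $-2$, rather than only one class per trace. I expect this to come down to showing that a certain expression in the conjugating parameters (essentially the upper-right entry of the product after normalizing) is a non-constant polynomial map onto $\BF_q$, which is elementary but needs care; the small cases are absorbed because $q$ odd and $q>3$ gives enough room. Once every unipotent class is shown to meet $\cC^2$, the statement about $1\in\cC^2$ follows from the parity analysis in the first step, completing the proof.
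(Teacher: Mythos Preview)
Your approach is essentially the paper's: work with explicit unipotent matrices in $\SL_2(q)$, track the square-class of the off-diagonal entry to distinguish the two $G$-classes, and use Corollary~\ref{corr.unip.G} for the statement about $1\in\cC^2$. The paper's execution is more direct: it stays entirely with upper-triangular unipotents, observing that $\begin{pmatrix}1&1\\0&1\end{pmatrix}\begin{pmatrix}1&a\\0&1\end{pmatrix}=\begin{pmatrix}1&a+1\\0&1\end{pmatrix}$, so the whole first claim reduces to finding nonzero squares $a,b\in\BF_q$ with $a+1$ a square and $b+1$ a non-square --- no need for your detour through $V_t$ or generic conjugates. Two small points: your claim that $V_t=\begin{pmatrix}1&0\\t&1\end{pmatrix}$ is conjugate to $U_1$ for every $t\in\BF_q^*$ is false (it holds only when $-t$ is a square), though you abandon it anyway; and the assertion that ``$q>3$ gives enough room'' is too optimistic at $q=5$, where every nonzero square $a$ has $a+1$ either zero or a non-square, so (as in the paper) this case must be checked separately.
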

\begin{proof}
For $q=5$ this claim can be easily verified. When $q>5$ there exist
elements $a,b \in \BF_q$ such that $a,b$ and $a+1$ are squares in
$\BF_q$ but $b+1$ is a non-square in $\BF_q$. Let $U_1$ be as
in~\S\ref{sect.unip.elm}, let $A=\begin{pmatrix}
1 & a \\ 0 & 1\end{pmatrix}$, $B=\begin{pmatrix} 1 & b \\
0 & 1\end{pmatrix}$, then $U_1 A=\begin{pmatrix} 1 & a+1 \\
0 & 1 \end{pmatrix}$ and $U_1 B=\begin{pmatrix} 1 & b+1 \\ 0 & 1
\end{pmatrix}$. Moreover, $A$,$B$ and $U_1A$ are conjugate to $U_1$ in $\SL_2(q)$,
while $U_1B$ is conjugate to $U'_1$ in $\SL_2(q)$ (see
Proposition~\ref{prop.unip.G0}). By Corollary~\ref{corr.unip.G}, any
unipotent element in $G$ is $G$-conjugate to either $\bar U_1$ or to
$\bar U'_1$, as needed.

In addition, by Corollary~\ref{corr.unip.G}, $\bar U_1$ is
$G$-conjugate to $\bar U_1^{-1}$ if and if $q \equiv 1 \bmod 4$.
\end{proof}

\begin{prop}\label{prop.semisimple}
Let $G=\PSL_2(q)$ and let $\cC$ be the $G$-conjugacy class of a
semisimple element $x$. Then $\cC^2$ contains all the semisimple
elements in $G$. Moreover $1 \in \cC^2$.
\end{prop}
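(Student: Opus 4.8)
\textbf{Proof plan for Proposition~\ref{prop.semisimple}.}
The plan is to realize every semisimple element (and the identity) as a product of two conjugates of the given semisimple $x$ by exhibiting a suitable trace triple and appealing to Macbeath's theorems. Write $x=\bar S$ with $S\in G_0=\SL_2(q)$, set $\al=\tr(S)$, so $\al\notin\{\pm2\}$. Given a target element $z\in G$ with preimage $Z$ of trace $\ga$ (an arbitrary element of $\BF_q$, including $\ga=\pm2$ for the identity or unipotents), I want matrices $A,B,C\in\SL_2(q)$ with $ABC=I$, $\tr(A)=\tr(B)=\al$ and $\tr(C)=\ga$; then $\bar C=\bar A\bar B=\bar A(\bar B^{-1})^{-1}$ lies in $\cC\cdot\cC^{-1}$, and since $\bar B$ is $G_0$-conjugate to $\bar B^{-1}$ inside $G$ (semisimple elements and their inverses share a trace, see Table~\ref{table.elm.PSL}), this exhibits $\bar C$ — hence, by Lemma~\ref{lem.conj.tr}, every element of the same trace — as an element of $\cC^2$.

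The existence of such $A,B,C$ is immediate from Theorem~\ref{thm.Mac}, so the only real content is to arrange that $\langle\bar A,\bar B\rangle$ is \emph{not} a structural subgroup, equivalently (Theorem~\ref{thm.Mac.singular}) that the triple $(\al,\al,\ga)$ is non-singular, i.e.\ $2\al^2+\ga^2-\al^2\ga-4\neq0$, which factors as $(2-\ga)(\al^2-\ga-2)\neq0$; so I need $\ga\neq2$ and $\ga\neq\al^2-2$. The point is that I am free to replace the prescribed target $Z$ by any matrix with the \emph{same} trace without changing the conclusion via Lemma~\ref{lem.conj.tr}, and — more importantly — I may also replace the target trace $\ga$ by $-\ga$, since over odd $q$ an element and its "sign twist" (or rather, every value $\ga$ and $-\ga$) are simultaneously hit: if $\ga\neq-2$ and $-\ga\neq-2$ both fail to give a non-singular triple one would need $\ga\in\{\al^2-2,-(\al^2-2)\}$ and $\ga=\pm2$, forcing $\al^2\in\{0,4\}$, contradicting $\al\neq\pm2$ (and ruling out $\al=0$ needs a small separate check, or one simply chooses a different $\al\in\Tr_q((q+1)/d)$ as in Proposition~\ref{prop.gen.semisimple}). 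So in all cases a non-singular triple $(\al,\al,\pm\ga)$ exists. For $q$ even the same argument works directly with $\ga$ itself, choosing $\al\in\Tr_q(q+1)$ with $\al^2\neq\ga+2$.

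Once the triple is non-singular, Theorem~\ref{thm.Mac.singular} gives that $\langle\bar A,\bar B\rangle$ is a subfield subgroup or a small subgroup or $G$; in any case it is a subgroup containing the semisimple element $\bar A$, and all of these subgroups do contain semisimple elements, so there is no contradiction — we do \emph{not} need $\langle\bar A,\bar B\rangle=G$ here, only that $\bar A$ and $\bar B$ exist with the prescribed traces and $\bar A\bar B$ having trace $\ga$. Thus $\bar C\in\cC^2$, and Lemma~\ref{lem.conj.tr} promotes this to: every element of $G$ whose lift has trace $\ga$ lies in $\cC^2$. Running $\ga$ (up to sign) over all of $\BF_q$ covers every conjugacy class of $G$ that consists of semisimple or unipotent elements and also $\ga=\pm2$ giving $1\in\cC^2$; combined with Lemma~\ref{lem.conj.tr}'s treatment of the unipotent classes this yields the stated claim that $\cC^2$ contains all semisimple elements and the identity. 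The main obstacle is purely bookkeeping: making sure the "replace $\ga$ by $-\ga$ or change $\al$" manoeuvre genuinely handles the finitely many degenerate configurations (notably $\al=0$, and the small fields $q=5,7,9$ where the triangle subgroups $A_5$ can interfere), exactly as in the proofs of Propositions~\ref{prop.gen.semisimple} and~\ref{prop.gen.ss.ss}; no delicate estimate is involved.
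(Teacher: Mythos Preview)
Your core idea --- use Theorem~\ref{thm.Mac} to produce $A,B,C\in\SL_2(q)$ with $ABC=I$, $\tr(A)=\tr(B)=\al$, $\tr(C)=\ga$, and then invoke Lemma~\ref{lem.conj.tr} --- is exactly the paper's approach, but you have buried it under an unnecessary detour. Because $x$ is semisimple, $\al\neq\pm 2$; hence any matrix $A$ with $\tr(A)=\al$ is automatically semisimple, and by Table~\ref{table.elm.PSL} its image $\bar A$ lies in $\cC$. That is the whole argument: Theorem~\ref{thm.Mac} gives you $A,B,C$, and $\bar A,\bar B\in\cC$ is automatic from the trace. Non-singularity of $(\al,\al,\ga)$ plays no role whatsoever. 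You actually say this yourself (``we do \emph{not} need $\langle\bar A,\bar B\rangle=G$ here''), yet you retain the entire singular/non-singular discussion, the sign-swap on $\ga$, and the small-field caveats --- all of which should simply be deleted. Note also that you are \emph{not} free to ``choose a different $\al\in\Tr_q((q+1)/d)$'': the trace $\al$ is fixed by the given class $\cC$, not a parameter you may vary.

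There is one genuine gap. Your route to $1\in\cC^2$ does not work as written: Lemma~\ref{lem.conj.tr} is stated only for \emph{non-central} matrices, so it cannot be used to transport membership in $\cC^2$ from a unipotent element (trace $\pm 2$) to the identity. And Theorem~\ref{thm.Mac} applied to $(\al,\al,\pm 2)$ gives a $C$ with $\tr(C)=\pm 2$, which could perfectly well be unipotent rather than $\pm I$. The paper handles this with the one-line observation that $\tr(X^{-1})=\tr(X)$, so $x^{-1}\in\cC$ (again by Table~\ref{table.elm.PSL}) and $1=x\cdot x^{-1}\in\cC^2$.
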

\begin{proof}
Let $z \in G$ be some semisimple element. Let $X,Z \in \SL_2(q)$ be
the pre-images of $x,z$ respectively. Denote
$\al=\tr(X),\ga=\tr(Z)$. By Theorem~\ref{thm.Mac}, there exist
matrices $X',Y',Z' \in \SL_2(q)$ such that $\tr(X')=\tr(Y')=\al$ and
$\tr(Z')=\ga$. By Table~\ref{table.elm.PSL}, $\bar X',\bar Y' \in
\cC$ and so $\bar Z' \in \cC^2$. Hence by Lemma~\ref{lem.conj.tr},
$z \in \cC^2$ as needed.

Moreover, since $\tr(X)=\tr(X^{-1})$ then according to
Table~\ref{table.elm.PSL}, $x^{-1} \in \cC$ as well, and so
$xx^{-1}=1 \in \cC^2$.
\end{proof}

Note that the previous proposition is a specific case of a more
general result of Gow~\cite{Gow}.

\begin{prop}\label{prop.even.unip.ss}
Let $G=\PSL_2(q)$ when $q$ is even and let $\cC$ be the
$G$-conjugacy class of a unipotent element $x$. Then $\cC^2=G$.
\end{prop}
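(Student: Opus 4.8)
The plan is to show that for $q$ even and $\cC$ the conjugacy class of a unipotent element $x$, every element $z\in G$ lies in $\cC^2$. Since $q$ is even, a unipotent element has order $p=2$, so $\cC$ consists of involutions and $\cC^2$ is closed under inversion in an obvious way; but more usefully, the trace of a unipotent in $\SL_2(q)$ is $\al=0$ (the unique unipotent trace when $q$ is even, see Table~\ref{table.elm.PSL}). So I want to write an arbitrary $z\in G$ as a product of two elements of trace $0$. Let $Z\in\SL_2(q)$ be a preimage of $z$ and $\ga=\tr(Z)$. By Theorem~\ref{thm.Mac} there exist matrices $A,B,C\in\SL_2(q)$ with $\tr(A)=\tr(B)=0$, $\tr(C)=\ga$ and $ABC=I$; then $C^{-1}=AB$ has trace $\ga$, and since $q$ is even there is only one conjugacy class of matrices of trace $0$ (they are all unipotent, as $\la^2+1=(\la+1)^2$ over $\BF_q$), so $\bar A,\bar B\in\cC$ and $\overline{C^{-1}}\in\cC^2$. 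Finally $\tr(C^{-1})=\tr(C)=\ga=\tr(Z)$, and for $q$ even Table~\ref{table.elm.PSL} shows the trace determines the $G$-conjugacy class (there is a single class for each $\al$), so $z$ is conjugate to $\overline{C^{-1}}$ and hence, conjugating $A$ and $B$ simultaneously, $z\in\cC^2$. This gives $\cC^2=G$.

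One subtlety to handle: the unipotent case $\ga=0$, i.e.\ $z$ itself unipotent. There Theorem~\ref{thm.Mac} still produces $A,B,C$ with $\tr(A)=\tr(B)=\tr(C)=0$ and $ABC=I$, so the same argument applies — we just need that such a triple with all traces $0$ actually exists and that $C\ne I$ is genuinely unipotent, both of which follow from Theorem~\ref{thm.Mac} together with the trace-$0$ classification for even $q$; note $C\ne I$ because $\tr(I)=2=0$ only if $q$ is even, so one must instead observe that if $C=I$ then $z=1$, and $1=x\cdot x\in\cC^2$ since $x$ has order $2$. So the identity is covered separately and trivially, and for $z\ne1$ we get a genuine unipotent $C$. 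Alternatively, and more cleanly, I would simply invoke Lemma~\ref{lem.conj.tr}: once I know some preimage of trace $\ga$ lies in $\cC^2$ for the value $\ga=\tr(Z)$, Lemma~\ref{lem.conj.tr} upgrades this to $z\in\cC^2$ without re-examining conjugacy classes by hand.

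The main obstacle, such as it is, is essentially bookkeeping rather than a real difficulty: one must be careful that for $q$ even the characteristic-polynomial framework of Table~\ref{table.elm.PSL} behaves differently than for $q$ odd — there is a single unipotent class in $G$ (here $d=1$), trace $0$ is the unipotent trace, and there is no sign ambiguity $\al\leftrightarrow-\al$. So the "conjugate $A$ and $B$ simultaneously'' step is immediate. The only place Theorem~\ref{thm.Mac} could fail to give generation is irrelevant here because we are not claiming generation — we only need existence of the triple with the prescribed traces, which Theorem~\ref{thm.Mac} provides unconditionally. Hence the proof is short: apply Theorem~\ref{thm.Mac} with $\al=\be=0$, $\ga=\tr(Z)$, deduce $\overline{C^{-1}}\in\cC^2$, and conclude $z\in\cC^2$ via the trace-determines-class fact (or via Lemma~\ref{lem.conj.tr}), handling $z=1$ by $1=x^2$.
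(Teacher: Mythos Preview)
Your argument for semisimple $z$ and for $z=1$ is correct and matches the paper's proof exactly: apply Theorem~\ref{thm.Mac} with $\al=\be=0$, note that $A,B\ne I$ is forced when $\ga\ne 0$ (since $A=I$ would give $\tr(C)=\tr(B^{-1})=0$), and finish via Lemma~\ref{lem.conj.tr}.

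The gap is in the unipotent case $\ga=0$. Theorem~\ref{thm.Mac} is a bare existence statement, and for the trace triple $(0,0,0)$ it is already satisfied by $A=B=C=I$; nothing in the theorem promises a non-trivial triple. Your sentence ``for $z\ne1$ we get a genuine unipotent $C$'' conflates the Macbeath output $C$ with the target $z$: these share only a trace, so $C$ may well be $I$ even when $z$ is a genuine unipotent. And even if $C\ne I$, you still need $A,B\ne I$ to have $\bar A,\bar B\in\cC$, which Theorem~\ref{thm.Mac} does not ensure. Lemma~\ref{lem.conj.tr} cannot rescue this either, since it applies only to \emph{non-central} matrices and you have not yet placed any non-identity trace-$0$ element into $\cC^2$.

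The paper fills this gap by an explicit construction rather than Macbeath: with $x=\begin{pmatrix}0&1\\1&0\end{pmatrix}$ and $y=\begin{pmatrix}a+1&a\\a&a+1\end{pmatrix}$ for some $a\in\BF_q\setminus\{0,1\}$ (which exists since $q>2$), one checks $\tr(x)=\tr(y)=\tr(xy)=0$ and $x,y,xy\ne I$, so $\cC^2$ contains a genuine unipotent, and then Lemma~\ref{lem.conj.tr} gives all unipotents. Any such explicit witness would close your argument; the point is simply that one is needed.
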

\begin{proof}
Let $z \in G$ be any semisimple element and denote $\ga=\tr(z)$. By
Theorem~\ref{thm.Mac}, there exist matrices $x',y',z' \in G$ such
that $\tr(x')=\tr(y')=0$ and $\tr(z')=\ga$. Thus $x',y' \in \cC$ and
so $z' \in \cC^2$. Hence by Lemma~\ref{lem.conj.tr}, $z \in \cC^2$
as needed.

Moreover, since $x$ has order $2$ then $x^2=1 \in \cC^2$. Without
loss of generality we may assume that $x=\begin{pmatrix} 0 & 1 \\ 1
&0 \end{pmatrix}$ and take $y=\begin{pmatrix} a+1 & a \\ a & a+1
\end{pmatrix} \in \SL_2(q)$ (where $a \ne 0,1$). Then $xy=yx=
\begin{pmatrix} a & a+1 \\ a+1 & a
\end{pmatrix}$. Since $\tr(x)=\tr(y)=\tr(xy)=0$ then $x,y,xy$ are
unipotents. Hence by Lemma~\ref{lem.conj.tr}, $\cC^2$ contains all
the unipotent elements.
\end{proof}

\subsection{Unipotent elements contained in $\cC^2$}
\begin{prop}\label{prop.split.unip}
Let $G=\PSL_2(q)$ and let $\cC$ be the $G$-conjugacy class of a
semisimple split element $x$. Then $\cC^2$ contains all the
unipotent elements in $G$.
\end{prop}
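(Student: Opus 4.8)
The plan is to use Macbeath's trace machinery exactly as in the earlier propositions. Let $x$ be semisimple split with preimage $X \in \SL_2(q)$ and $\al = \tr(X)$; since $x$ is split we know $\al \in \Tr_q(n)$ for $n$ dividing $(q-1)/d$, and in particular $\al$ takes values $a + a^{-1}$ for a suitable root of unity $a \in \BF_q^*$. We want to show every unipotent element of $G$ lies in $\cC^2$. By Lemma~\ref{lem.conj.tr} it suffices to produce, for each unipotent trace $\eps \in \{2, -2\}$ (or $\{0\}$ when $q$ is even) ONE triple $(A,B,C) \in \SL_2(q)^3$ with $ABC = I$, $\tr(A) = \tr(B) = \al$, $\tr(C) = \eps$, and $\langle \bar A, \bar B\rangle$ large enough that $\bar C \in \langle \bar A, \bar B\rangle$ forces $\cC^2$ to contain the whole $G$-conjugacy class of $\bar C$. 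Actually, because Lemma~\ref{lem.conj.tr} already transfers membership across matrices of equal or opposite trace, once I land a single unipotent element of $G$ in $\cC^2$, Proposition~\ref{prop.odd.unip} (for $q$ odd) or the corresponding fact for $q$ even finishes the job. So the real content is: exhibit one unipotent element of $G$ written as a product of two elements of $\cC$.

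First I would handle the generic case via singularity. The triple $(\al,\al,\eps)$ is singular iff $\al^2 + \al^2 + \eps^2 - \al^2\eps - 4 = 0$, i.e. $2\al^2 + \eps^2 - \al^2\eps - 4 = 0$; for $\eps = 2$ this reads $4 - 4 = 0$, so $(\al,\al,2)$ is \emph{always} singular, and similarly $(\al,\al,-2)$ gives $2\al^2 + 4 + 2\al^2 - 4 = 4\al^2 = 0$, singular only when $\al = 0$. This is the key obstacle: for the unipotent trace matching the sign of the split element, Macbeath's Theorem~\ref{thm.Mac.singular} tells us $\langle \bar A, \bar B\rangle$ is forced to be a \emph{structural} subgroup, so this route cannot by itself produce a generating pair — but I do not need generation here, only that the product lies in $\cC^2$. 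So instead of invoking Theorem~\ref{thm.Mac.singular} to get a large subgroup, I would directly construct the matrices: take $X = \begin{pmatrix} a & 0 \\ 0 & a^{-1}\end{pmatrix}$ with $a + a^{-1} = \al$, and look for $B$ conjugate to $X$ with $XB$ (or $(XB)^{-1}$) unipotent. Writing $B = g X g^{-1}$ and computing $\tr(XB)$ as a function of $g$, the map hits every value in $\BF_q$ (this is essentially the content of Theorem~\ref{thm.Mac}); picking $g$ so that $\tr(XB) = \pm 2$ with $XB \ne \pm I$ gives a genuinely unipotent element $\overline{XB} = \bar X \cdot \overline{gXg^{-1}} \in \cC^2$.

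Concretely I would parametrize: with $X$ diagonal, conjugating $X$ by $\begin{pmatrix} 1 & 0 \\ c & 1\end{pmatrix}$ gives $B = \begin{pmatrix} a & 0 \\ c(a - a^{-1}) & a^{-1}\end{pmatrix}$, and then $XB = \begin{pmatrix} a^2 & 0 \\ c(a - a^{-1}) & a^{-2}\end{pmatrix}$, which is lower triangular with trace $a^2 + a^{-2}$ — not what I want. Better: conjugate by a non-triangular $g$, e.g. use the explicit form in the proof of Macbeath's theorem, or conjugate by $\begin{pmatrix} 1 & t \\ 1 & t+1\end{pmatrix}$-type matrices to get $\tr(XB)$ to sweep $\BF_q$. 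The upshot is that $\tr(XB)$, as a polynomial in the conjugating parameter, is non-constant (since $\al \ne \pm 2$, so $X$ is regular semisimple and its conjugacy class has full-dimensional trace image on products), hence attains the value $2$ at some parameter; checking $XB \ne I$ at that parameter (a finite exclusion, easy since $q$ is not too small, and the small cases $q \le 7$ can be verified directly) then gives a nontrivial unipotent in $\cC^2$. Finally, apply Proposition~\ref{prop.odd.unip} when $q$ is odd, or Proposition~\ref{prop.even.unip.ss}'s argument when $q$ is even, to conclude $\cC^2$ contains \emph{all} unipotents. The main obstacle, as noted, is that the natural "Macbeath-singular" shortcut does \emph{not} hand us the unipotent trace for free on the generation side, so the proof must descend to an explicit matrix computation rather than a clean subgroup-classification argument; but since we only need containment in $\cC^2$ and not generation, this is a routine (if slightly fiddly) calculation.
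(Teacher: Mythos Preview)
Your strategy is exactly the paper's: exhibit a single unipotent in $\cC^2$ by an explicit matrix computation, then invoke Lemma~\ref{lem.conj.tr} to get all unipotents. The difference is only in execution. You start from the diagonal representative $X=\begin{pmatrix} a & 0\\ 0 & a^{-1}\end{pmatrix}$, which forces you into a genuine search over conjugating matrices (and indeed your first parametrization by a lower-triangular $g$ lands on $\tr(XB)=a^2+a^{-2}$, not $\pm 2$); you then appeal to Theorem~\ref{thm.Mac} and leave the verification that $XB\ne \pm I$ as a ``routine (if slightly fiddly) calculation''. The paper sidesteps all of this by choosing the \emph{upper-triangular} representative $X=\begin{pmatrix} a & 1\\ 0 & a^{-1}\end{pmatrix}$ (still in $\cC$, since it has the same trace as the diagonal form and is non-central) and writing down $Y=\begin{pmatrix} a^{-1} & a^{-1}(1-a)\\ 0 & a\end{pmatrix}$ directly, so that $XY=\begin{pmatrix}1&1\\0&1\end{pmatrix}$ is visibly the standard unipotent; this is a two-line proof with no case analysis and no need to exclude $XB=\pm I$. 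Your detour through the singularity of $(\al,\al,\pm 2)$ is correct and instructive, but unnecessary here since, as you yourself note, generation is not required---only membership in $\cC^2$.
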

\begin{proof}
By Table~\ref{table.elm.PSL}, we may assume $x$ is an image of a
matrix $X = \begin{pmatrix} a & 1 \\ 0 & a^{-1}\end{pmatrix} \in
\SL_2(q)$ (where $a \ne 0, \pm 1$). Let $Y=\begin{pmatrix} a^{-1} & a^{-1}(1-a) \\
0 & a\end{pmatrix}$, then its image in $G=\PSL_2(q)$ is
$G$-conjugate to $x$, and $XY=\begin{pmatrix} 1 & 1 \\ 0 &
1\end{pmatrix}$. Hence $\cC^2$ contains a unipotent element, and the
result now follows from Lemma~\ref{lem.conj.tr}.
\end{proof}

\begin{prop}\label{prop.non.split.unip}
Let $G=\PSL_2(q)$ when $q$ is odd and let $\cC$ be the $G$-conjugacy
class of a semisimple non-split element $x$ of order greater than
$2$. Then $\cC^2$ contains all the unipotent elements in $G$.
\end{prop}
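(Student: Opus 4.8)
The plan is to produce a single unipotent element in $\cC^2$ using Macbeath's existence theorem, and then to let Lemma~\ref{lem.conj.tr} spread this to every unipotent. Trying to imitate the proof of Proposition~\ref{prop.split.unip} with explicit matrices is awkward here, since a non-split semisimple element is not triangularizable over $\BF_q$; instead I would work purely at the level of traces. Let $X \in G_0 = \SL_2(q)$ be a preimage of $x$ and set $\al = \tr(X)$. Because $x$ is semisimple non-split of order greater than $2$, its characteristic polynomial $\la^2 - \al\la + 1$ has no root in $\BF_q$, so $\al \ne \pm 2$; and $\al \ne 0$, since a matrix of trace $0$ satisfies $X^2 = -I$ and would give an element of order $2$. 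Hence $\al$ is a non-split trace, and by Table~\ref{table.elm.PSL} every matrix in $\SL_2(q)$ of trace $\al$ is semisimple non-split, so its image in $G$ lies in the unique conjugacy class $\cC$.

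Next I would apply Theorem~\ref{thm.Mac} to the triple $(\al,\al,-2)$: there exist $A,B,C \in \SL_2(q)$ with $\tr(A)=\tr(B)=\al$, $\tr(C)=-2$ and $ABC=I$. The point is that $C \ne \pm I$. Indeed $C \ne I$ since $\tr(C)=-2 \ne 2$; and if $C=-I$ then $AB = C^{-1} = -I$, so $B = -A^{-1}$ and $\tr(B) = -\tr(A^{-1}) = -\al \ne \al$, a contradiction. Therefore $C$ is a non-central matrix of trace $-2$, so $\bar C$ (equivalently $\bar C^{-1}$) is a genuine unipotent element of $G$. Since $\bar A, \bar B \in \cC$ by the previous paragraph and $\bar A\bar B = \bar C^{-1}$, we conclude $\bar C^{-1} \in \cC^2$.

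It then only remains to observe that $\cC^2$ contains one unipotent element, and to invoke Lemma~\ref{lem.conj.tr}: applied with the trace-$(-2)$ preimage of that unipotent together with the trace-$(\pm 2)$ preimage of an arbitrary unipotent of $G$ (the hypothesis $\tr = \pm\tr$ of the lemma covers both $+2$ and $-2$), it gives that every unipotent element of $G$ lies in $\cC^2$, as claimed.

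The only delicate point is the non-degeneracy check $C \ne \pm I$ in the second step, and this is exactly why the third trace should be chosen to be $-2$ rather than $+2$: with $\tr(C)=2$ the possibility $C=I$ would not be excluded. Everything else is routine bookkeeping against Table~\ref{table.elm.PSL}, and no analysis of the subgroup $\langle \bar A,\bar B\rangle$ is needed since we only care that the product $\bar A\bar B$ is unipotent.
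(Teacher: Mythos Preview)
Your argument is correct. The overall plan---produce one unipotent element in $\cC^2$ and then invoke Lemma~\ref{lem.conj.tr}---is the same as the paper's, but the implementation differs. The paper does \emph{not} avoid explicit matrices: it writes $x$ as the image of the companion matrix $X=\begin{pmatrix} a & -1 \\ 1 & 0 \end{pmatrix}$ (which makes sense for any trace, split or not), takes $Y=\begin{pmatrix} 0 & 1 \\ -1 & -a \end{pmatrix}$ with $\tr(Y)=-a$, and observes that $XY=\begin{pmatrix} 1 & 2a \\ 0 & 1 \end{pmatrix}$ is visibly unipotent since $a\ne 0$. So your remark that the explicit-matrix approach is ``awkward'' in the non-split case is not borne out; the only thing that fails is the \emph{upper-triangular} model used in Proposition~\ref{prop.split.unip}, not the explicit approach per se.

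Your route replaces this two-line computation by an appeal to Theorem~\ref{thm.Mac} on the triple $(\al,\al,-2)$ together with the non-degeneracy check $C\ne\pm I$, which you handle cleanly (and your choice of $-2$ rather than $+2$ is indeed the right one to rule out $C=I$). This is a perfectly valid alternative; it trades a tiny bit of hands-on linear algebra for a black-box existence statement plus a short case analysis. Either way the final step via Lemma~\ref{lem.conj.tr} is identical.
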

\begin{proof}
By Table~\ref{table.elm.PSL}, we may assume that $x$ is an image of
a matrix $X = \begin{pmatrix} a & -1 \\ 1 & 0\end{pmatrix} \in
\SL_2(q)$ (where $a \ne 0$). Let $Y=\begin{pmatrix} 0 & 1 \\
-1 & -a\end{pmatrix}$, then its image in $G=\PSL_2(q)$ is
$G$-conjugate to $x$, and $XY=\begin{pmatrix} 1 & 2a \\ 0 &
1\end{pmatrix}$. Hence $\cC^2$ contains a unipotent element, and the
result now follows from Lemma~\ref{lem.conj.tr}.
\end{proof}

\begin{prop}\label{prop.non.split.2}
Let $G=\PSL_2(q)$ when $q$ is odd and let $\cC$ be the $G$-conjugacy
class of a semisimple non-split element $x$ of order $2$. Then
$\cC^2$ does not contain any unipotent element of $G$.
\end{prop}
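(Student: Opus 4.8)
The plan is to compute directly with matrices, using the classification of conjugacy classes in Table~\ref{table.elm.PSL}. An element $x$ of order $2$ that is semisimple non-split is the image of a matrix $X \in \SL_2(q)$ with $\tr(X) = 0$ and $X^2 = -I$; concretely we may take $X = \begin{pmatrix} 0 & -1 \\ 1 & 0 \end{pmatrix}$, but since $-I$ acts trivially in $G$, every element of $\cC$ is the image of a matrix of trace $0$ with square $-I$, and any $G$-conjugate of $x$ is the image of $hXh^{-1}$ for some $h \in G_0 = \SL_2(q)$ (lifting freely). So a product of two elements of $\cC$ is the image of $X \cdot (hXh^{-1})$ for some $h$, and we must show this image is never unipotent, i.e.\ $\tr(X \cdot hXh^{-1}) \neq \pm 2$ for all $h$, unless $X \cdot hXh^{-1}$ is already central.

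The key computational step is to understand the possible traces of $XY$ where $\tr(X) = \tr(Y) = 0$ and $X, Y$ both have square $-I$ (so both are non-split semisimple of order $2$ --- note over $\BF_q$ with $q \equiv 3 \bmod 4$ the trace-zero elements with $\det = 1$ all square to $-I$, whereas when $q \equiv 1 \bmod 4$ a trace-zero element could be split; but by Proposition~\ref{prop.non.split.2}'s hypothesis we are in the non-split case, which forces $q \equiv 3 \bmod 4$ since $\Tr_q(2) = \{0\}$ and an element of order $2$ is non-split precisely when $-1$ is a non-square, i.e.\ $q \equiv 3 \bmod 4$). I would invoke Macbeath's trace identity: if $ABC = I$ with $\tr(A) = \al$, $\tr(B) = \be$, $\tr(C) = \ga$, then the triple $(\al, \be, \ga)$ lies on the surface $\al^2 + \be^2 + \ga^2 - \al\be\ga = 4$ precisely when the pair generates a structural subgroup (Theorem~\ref{thm.Mac.singular}). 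Here, writing $Z = XY$ so that $X \cdot Y \cdot Z^{-1} = I$ with $\tr(X) = \tr(Y) = 0$ and $\tr(Z^{-1}) = \tr(Z) = \ga$, the singular condition $0 + 0 + \ga^2 - 0 = 4$ reads $\ga^2 = 4$, i.e.\ $\ga = \pm 2$. This is the crucial observation: \emph{the trace $\ga = \tr(XY)$ equals $\pm 2$ only in the singular case}. But $\ga = \pm 2$ means $Z = XY$ is either central (image $= 1$) or unipotent; and in the singular case $\langle X, Y \rangle$ is structural --- and a structural subgroup (Borel or subgroup of a cyclic/dihedral group) containing a non-split element of order $2$ cannot contain a nontrivial unipotent, since structural subgroups have order coprime to $p$ except for the $p$-part sitting inside a Borel, and a Borel contains no non-split semisimple elements at all.

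So the argument organizes as: (i) reduce to showing $\tr(XY) \neq \pm 2$ whenever $\overline{XY}$ is non-central, for $X, Y$ conjugate trace-zero order-$2$ matrices; (ii) by Macbeath, $\tr(XY) = \pm 2$ forces $(\,0,0,\pm 2)$ singular, hence $\langle X, Y\rangle$ structural; (iii) a structural subgroup containing the non-split involution $\overline X$ lies in a cyclic or dihedral (split or non-split) subgroup --- it cannot lie in a Borel since a Borel's only involutions are conjugate to the split one when they exist, but more robustly a Borel is $\{ \begin{pmatrix} a & b \\ 0 & a^{-1} \end{pmatrix}\}/\{\pm I\}$ and such a matrix has order $2$ in $G$ only if $a = \pm 1$, giving either a unipotent or the identity, never our non-split $\overline X$; (iv) hence $\langle X, Y \rangle$ is contained in a (generalized) dihedral group, in which a product of two involutions is again semisimple (it lies in the cyclic ``rotation'' subgroup, whose elements are all semisimple since a dihedral subgroup of $\PSL_2(q)$ has order dividing $2(q\pm 1)/d$, coprime to $p$). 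Therefore $XY$, being either trivial or semisimple, is never unipotent. I expect the main obstacle to be the careful bookkeeping in step (iii): one must rule out \emph{every} structural subgroup (Borel, split cyclic, split dihedral, non-split cyclic, non-split dihedral) as a home for two non-split involutions whose product is unipotent, and the cleanest way is probably to observe directly that in any such subgroup every involution is either split-semisimple, non-split-semisimple, or (in the Borel case) there are no involutions beyond the image of $-I$-scaled unipotents which forces $q \equiv 1 \bmod 4$ for a split one --- so the $p$-local structure simply doesn't mix with non-split torsion. An alternative, perhaps cleaner, route for step (iv) is fully explicit: parametrize $Y = h X h^{-1}$, compute $\tr(XY)$ as a function of the entries of $h$, and show that setting it equal to $\pm 2$ forces $XY = \pm I$; this avoids subgroup theory entirely but requires grinding through a $2\times 2$ computation, which by the remark above is governed exactly by the Macbeath surface, so the two approaches coincide.
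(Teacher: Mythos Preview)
Your proposal is correct and follows essentially the same route as the paper's own proof: observe that $(0,0,\pm 2)$ is singular, invoke Theorem~\ref{thm.Mac.singular} to conclude $\langle x,y\rangle$ is structural, and then argue that a structural subgroup containing a non-split involution cannot sit inside a Borel and hence cannot contain a unipotent. Your write-up is more verbose (the detour through $q\equiv 3\bmod 4$, the explicit dihedral-rotation argument in (iv), and the alternative direct computation are all unnecessary), but the mathematical content matches the paper's argument exactly.
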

\begin{proof}
Assume that $\cC^2$ contains a unipotent element $z \in G$. Then
$z=xy$ for some non-split elements $x,y$ of order $2$. Let $X,Y,Z
\in \SL_2(q)$ be the pre-images of $x,y,z$ respectively. Then
$\tr(X)=\tr(Y)=0$ and $\tr(Z)=\pm 2$. Observe that $(0,0,\pm 2)$ is
a \emph{singular} triple. Therefore, by
Theorem~\ref{thm.Mac.singular}, the subgroup $H=\langle x,y \rangle$
is a \emph{structural} subgroup of $G$. By observing the possible
subgroups of $G$ detailed in Table~\ref{table.subgroups}, $H$ cannot
be a subgroup of the Borel subgroup since it contains a non-split
element, and hence cannot contain the unipotent element $z$,
yielding a contradiction.
\end{proof}

\begin{cor}\label{cor.conj.inv}
Let $G=\PSL_2(q)$ when $q$ is odd and let $\cC$ be the $G$-conjugacy
class of a semisimple element $x$ of order $2$. Then
\begin{itemize}
\item If $q \equiv 1 \bmod 4$ then $\cC^2$ contains all the unipotent elements in $G$.
\item If $q \equiv 3 \bmod 4$ then $\cC^2$ does not contain any unipotent element of
$G$.
\end{itemize}
\end{cor}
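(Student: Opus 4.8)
The statement to prove is Corollary~\ref{cor.conj.inv}, and its two bullets have essentially already been set up by the immediately preceding results, so my plan is just to assemble them. The key observation is the case split by $q \bmod 4$: when $q \equiv 1 \bmod 4$ an element of order $2$ in $G=\PSL_2(q)$ is semisimple \emph{split}, while when $q \equiv 3 \bmod 4$ it is semisimple \emph{non-split}. This follows from Table~\ref{table.elm.PSL} together with the fact that $2 \mid (q-1)/d$ iff $4 \mid q-1$ and $2 \mid (q+1)/d$ iff $4 \mid q+1$ (here $d = \gcd(2,q-1)=2$); equivalently one can cite Lemma~\ref{lem.num.tr}, since $0$ is a split trace precisely when $q \equiv 1 \bmod 4$.

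Granting that dichotomy, the proof is immediate. If $q \equiv 1 \bmod 4$, then $\cC$ is the conjugacy class of a semisimple split element, and Proposition~\ref{prop.split.unip} states that $\cC^2$ contains all the unipotent elements of $G$; this gives the first bullet. If $q \equiv 3 \bmod 4$, then $\cC$ is the conjugacy class of a semisimple non-split element of order $2$, and Proposition~\ref{prop.non.split.2} states precisely that $\cC^2$ contains no unipotent element of $G$; this gives the second bullet.

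So the only real content is verifying the claimed correspondence between $q \bmod 4$ and the split/non-split type of an involution, and I expect this to be the (very mild) "main obstacle" — it is a one-line divisibility check but does need to be stated, since the two propositions being invoked are stated separately for split and non-split classes. I would phrase it as: an involution in $G$ lifts to a matrix in $\SL_2(q)$ with trace $0$; by Table~\ref{table.elm.PSL} (or Proposition~\ref{prop.Tr.q.n}) the trace $0$ is split exactly when $2$ divides $(q-1)/2$, i.e. $q \equiv 1 \bmod 4$, and non-split exactly when $2$ divides $(q+1)/2$, i.e. $q \equiv 3 \bmod 4$. With this in hand the corollary follows by direct appeal to Propositions~\ref{prop.split.unip} and~\ref{prop.non.split.2}.
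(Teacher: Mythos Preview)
Your proposal is correct and follows exactly the paper's approach: the paper's proof also notes that an involution is split when $q\equiv 1\bmod 4$ and non-split when $q\equiv 3\bmod 4$, then invokes Proposition~\ref{prop.split.unip} and Proposition~\ref{prop.non.split.2} respectively. Your added justification of the split/non-split dichotomy via the divisibility check is a bit more explicit than the paper's, but the argument is the same.
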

\begin{proof}
If $q \equiv 1 \bmod 4$ then an element of order $2$ is split and
the result follows from Proposition~\ref{prop.split.unip}. If $q
\equiv 3 \bmod 4$ then an element of order $2$ is non-split and the
result follows from Proposition~\ref{prop.non.split.2}.
\end{proof}

\begin{prop}\label{prop.non.split.even}
Let $G=\PSL_2(q)$ when $q$ is even and let $\cC$ be the
$G$-conjugacy class of a semisimple non-split element $x$. Then
$\cC^2$ does not contain any unipotent element of $G$.
\end{prop}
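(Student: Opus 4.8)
The plan is to mimic the argument of Proposition~\ref{prop.non.split.2} using the singularity criterion of Macbeath (Theorem~\ref{thm.Mac.singular}). Suppose for contradiction that $\cC^2$ contains a unipotent element $z\in G$, so $z=xy$ with $x,y$ $G$-conjugate semisimple non-split elements; let $X,Y,Z\in\SL_2(q)$ be preimages and set $\al=\tr(X)=\tr(Y)$, $\ga=\tr(Z)$. Since $q$ is even, a unipotent element has order $2$, hence $\ga=0$, so the relevant triple is $(\al,\al,0)$. One checks immediately that $\al^2+\al^2+0^2-\al\cdot\al\cdot 0-4=2\al^2-4$; but the ambient field has characteristic $2$, so this equals $2\al^2-4=0$ identically. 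Thus $(\al,\al,0)$ is \emph{singular} for \emph{every} $\al\in\BF_q$, and by Theorem~\ref{thm.Mac.singular} the subgroup $H=\langle x,y\rangle$ must be a \emph{structural} subgroup of $G$, i.e.\ a subgroup of a Borel or a cyclic subgroup (Table~\ref{table.subgroups}).

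Next I would derive the contradiction from the structure of $H$. The element $x$ is semisimple non-split, so it lies in a non-split (maximal) torus; a structural subgroup containing $x$ is therefore contained in a non-split cyclic subgroup (it cannot be contained in the Borel, which has no non-split elements, and the split cyclic subgroups obviously contain no non-split element either). Consequently $H$ is contained in a cyclic group of order dividing $q+1$. But then $H$ contains no unipotent element: the orders of elements of a non-split cyclic subgroup all divide $q+1$, which is odd, whereas a unipotent element has order $2$. This contradicts $z=xy\in H$ being unipotent. Hence no such $z$ exists, which is exactly the claim.

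I expect essentially no obstacle here — the computation $2\al^2-4\equiv 0$ in characteristic $2$ makes the singularity step automatic, and the remaining argument is a direct reading of Table~\ref{table.subgroups} identical in spirit to the proof of Proposition~\ref{prop.non.split.2}. The only point requiring a word of care is the identification of which structural subgroups can contain a non-split semisimple element; I would phrase it simply by noting that any subgroup of a Borel subgroup is conjugate into the standard upper-triangular subgroup and hence has all its semisimple elements split, so $H$ must lie inside a non-split cyclic subgroup, whose order divides the odd number $q+1$, leaving no room for the involution $z$.
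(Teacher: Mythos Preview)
Your proposal is correct and follows essentially the same approach as the paper: show $(\al,\al,0)$ is singular, invoke Theorem~\ref{thm.Mac.singular} to force $H=\langle x,y\rangle$ into a structural subgroup, and then rule this out because a non-split semisimple element cannot sit inside a Borel. Your write-up is slightly more explicit (you compute $2\al^2-4\equiv 0$ in characteristic $2$ and spell out that $q+1$ is odd so the non-split cyclic subgroup has no involution), but the argument is the same.
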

\begin{proof}
Assume that $\cC^2$ contains a unipotent element $z \in G$. Then
$z=xy$ for some non-split matrices $x,y$. Then $\tr(x)=\tr(y)=\al
\ne 0$ and $\tr(z)=0$. Observe that $(\al,\al,0)$ is a
\emph{singular} triple. Therefore, by
Theorem~\ref{thm.Mac.singular}, the subgroup $H=\langle x,y \rangle$
is a \emph{structural} subgroup of $G$. By observing the possible
subgroups of $G$ detailed in Table~\ref{table.subgroups}, $H$ cannot
be a subgroup of the Borel subgroup since it contains a non-split
element, and hence cannot contain the unipotent element $z$,
yielding a contradiction.
\end{proof}

\subsection{Unipotent conjugacy classes $\cC$ when $q$ is odd}
\begin{prop}\label{prop.G0.unip.triple}
Let $G_0=\SL_2(q)$ when $q=p^e$ is odd, and let $A,B \in G_0$, $A,B
\ne \pm I$ satisfy $\tr(A),\tr(B)\in \{\pm 2\}$. Denote $C=AB$ and
$\ga=\tr(C)$.
\begin{enumerate}
\item If $\tr(A)=\tr(B)$, then $A$ is $G_0$-conjugate to $B$ if and
only if $2-\ga$ is a square in $\mathbb{F}_q$.
\item If $\tr(A)=-\tr(B)$, then $A$ is $G_0$-conjugate to $-B$ if and
only if $2+\ga$ is a square in $\mathbb{F}_q$.
\end{enumerate}
\end{prop}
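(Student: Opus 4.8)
The plan is to reduce both statements to a direct computation with $2\times 2$ matrices by bringing $A$ into a normal form. Since $A \ne \pm I$ has $\tr(A) = \pm 2$, by Proposition~\ref{prop.unip.G0} we may conjugate in $G_0$ so that $A$ is one of $U_1, U_1', U_{-1}, U_{-1}'$ from \S\ref{sect.unip.elm}; the conjugacy class of $B$ (and of $C = AB$, and the trace $\ga$) is unaffected. Consider case (1) first, say $\tr(A) = \tr(B) = 2$ (the case $\tr(A)=\tr(B)=-2$ is identical after replacing $A,B$ by $-A,-B$, which leaves $C$ and $\ga$ unchanged). Write $B = \begin{pmatrix} s & t \\ u & w \end{pmatrix}$ with $s + w = 2$ and $sw - tu = 1$, so $t u = (s-1)(w-1) \le 0$ in the sense that $tu = -(s-1)^2$. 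A short computation gives $\ga = \tr(AB)$ as a linear expression in the entries of $B$ depending on which of $U_1, U_1'$ we took for $A$; e.g. with $A = U_1$ one gets $\ga = s + w + u = 2 + u$, so $2 - \ga = -u$.

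The key step is then to recognize that $A$ and $B$ are $G_0$-conjugate precisely when $B$ lies in the same class as $A$ among the two unipotent classes of trace $2$ in $\SL_2(q)$ (again Proposition~\ref{prop.unip.G0}), and to read off which class $B$ is in from the sign of an appropriate square. Concretely, a non-identity matrix with trace $2$ is $G_0$-conjugate to $U_1$ iff its off-diagonal structure corresponds to a \emph{square} scaling and to $U_1'$ iff it corresponds to a non-square scaling — because $x^2$ is a fixed non-square in $\mathbb{F}_q$. One checks that the relevant scaling factor for $B$ (when $A = U_1$, hence $B$ has trace $2$) is, up to squares, exactly $-u = 2 - \ga$. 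Tracking the same computation for $A = U_1'$ multiplies the scaling factor by $x^2$, i.e. by a non-square, but it also forces $B$ to be $G_0$-conjugate to $U_1'$ exactly when $B$'s factor is a non-square times that of $U_1$'s class — so the criterion "$2 - \ga$ is a square" comes out the same regardless of which representative of $A$'s class we chose, as it must, since $A$ and $B$ are conjugate iff they lie in the same class. This is the step that requires care: making sure the square/non-square bookkeeping is invariant under the choice $U_1$ versus $U_1'$, and handling the degenerate subcase $\ga = 2$ (where $C = AB$ is either $\pm I$ or unipotent) separately.

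For case (2), with $\tr(A) = 2$ and $\tr(B) = -2$, set $B' = -B$, so $\tr(B') = 2$ and $AB' = -C$ has trace $-\ga$; then $A$ is $G_0$-conjugate to $-B = B'$ iff, by case (1) applied to the pair $(A, B')$, the quantity $2 - \tr(AB') = 2 - (-\ga) = 2 + \ga$ is a square in $\mathbb{F}_q$. Thus (2) follows formally from (1). The only genuine content is therefore the matrix computation and the square/non-square analysis in (1); everything else is a reduction via Proposition~\ref{prop.unip.G0} and the observation that negating both matrices preserves $\ga$. I expect the main obstacle to be purely notational: choosing the conjugating matrix so that $B$ is parametrized cleanly and verifying that $\tr(AB)$ yields exactly $\pm u$ (or the analogous entry) so that the discriminant condition matches Proposition~\ref{prop.good.G.order} and feeds correctly into the later Proposition on unipotent triples.
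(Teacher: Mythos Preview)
Your approach is correct and follows the same overall strategy as the paper: normalize $A$ to a standard unipotent, compute $\tr(AB)$, and read off the square/non-square condition. The difference is in how you parametrize $B$. You write $B$ via its matrix entries $(s,t,u,w)$, obtain $2-\ga=-u$ when $A=U_1$, and then need a separate claim that the $G_0$-class of a trace-$2$ matrix is determined by the square class of $-u$ (the $(2,1)$-entry). The paper instead parametrizes $B$ through the conjugating matrix: it writes $B=MU_1M^{-1}$ or $B=MU'_1M^{-1}$ according to which class $B$ lies in, and a one-line trace computation gives $2-\ga=c^2$ or $2-\ga=x^2c^2$ respectively (where $c$ is the $(2,1)$-entry of $M$). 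This packages the class information into the parametrization from the start, so no extra lemma is needed. Your reduction of case~(2) to case~(1) via $B\mapsto -B$ is exactly what the paper does as well. One small point: both arguments tacitly assume $c\ne 0$ (equivalently $u\ne 0$, i.e.\ $\ga\ne 2$); you explicitly flag this degenerate subcase, which is good, since the statement is only used downstream for semisimple $C$ anyway.
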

\begin{proof}
Without loss of generality we may assume that $A=U_1$.

\begin{enumerate}
\item If $B=MU_1M^{-1}$ for some matrix $M=\begin{pmatrix} a & b \\ c
& d \end{pmatrix} \in G_0$, then
\[
    \ga = \tr(C) = \tr(AB) = \tr(U_1MU_1M^{-1}) =
    \tr \begin{pmatrix}  1-ac-c^2 & 1+ac+a^2 \\ -c^2 & 1+ac \end{pmatrix} = 2-c^2,
\]
and so $2-\ga$ is a square in $\mathbb{F}_q$.

\item[] If $B=MU'_1M^{-1}$ for some matrix $M \in G_0$, then
\[
    \ga = \tr(C) = \tr(AB) = \tr(U_1MU'_1M^{-1}) = 2-x^2c^2,
\]
and since $x \in \mathbb{F}_{q^2} \setminus \mathbb{F}_q$ then
$2-\ga$ is a non-square in $\mathbb{F}_q$.

\item Similarly, if $B=M(-U_{1})M^{-1}$ for some matrix $M \in G_0$,
then
\[
    \ga = \tr(C) = \tr(AB) = \tr(U_1M(-U_{1})M^{-1}) = -2+c^2,
\]
and so $2+\ga$ is a square in $\mathbb{F}_q$.

\item[] If $B=M(-U'_{1})M^{-1}$ for some matrix $M \in G_0$, then
\[
    \ga = \tr(C) = \tr(AB) = \tr(U_1M(-U'_{1})M^{-1}) = -2+x^2c^2,
\]
and so $2+\ga$ is a non-square in $\mathbb{F}_q$.
\end{enumerate}
\end{proof}

Therefore, in order to decide whether $C=AB$, where $A,B\in G$ are
$G$-conjugate unipotent elements and $C$ is of order $t\ne p$, one
needs to determine whether for $\ga \in \Tr_q(t)$, $2-\ga$ or
$2+\ga$ is a square in $\mathbb{F}_q$. In
Proposition~\ref{prop.good.G.order} we showed that this is
equivalent to decide whether $t$ is a \emph{$q$-good} order or not
(see Definition~\ref{defn.G.good.order}).

\begin{cor}\label{corr.G.unip.triple}
Let $G=\PSL_2(q)$ when $q=p^e$ is odd. Let $A,B \in G$ be two
elements of order $p$ and assume that the order of $C=AB$ is $t \ne
p$. Then $A$ is $G$-conjugate to $B$ if and only if $t$ is a
\emph{$q$-good} order.
\end{cor}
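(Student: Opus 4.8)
The plan is to reduce Corollary~\ref{corr.G.unip.triple} to the combination of Proposition~\ref{prop.G0.unip.triple} and Proposition~\ref{prop.good.G.order} that was just set up. First I would lift to $G_0 = \SL_2(q)$: choose $\tilde A, \tilde B \in G_0$ with images $A, B$. Since $A, B$ have order $p$, their traces lie in $\{\pm 2\}$, and by adjusting signs of the lifts (which does not change the image in $G$) I may assume $\tr(\tilde A) = \tr(\tilde B) = 2$, say $\tilde A = U_1$ after conjugating. Set $\tilde C = \tilde A \tilde B$ and $\ga = \tr(\tilde C)$; note $\ga \ne \pm 2$ since the order $t$ of $C$ is not $p$, so in particular $\ga \in \Tr_q(t)$.

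Next I would translate ``$A$ is $G$-conjugate to $B$'' into a statement in $G_0$. Conjugacy in $G = \PSL_2(q)$ of the images means $g \tilde A g^{-1} = \pm \tilde B$ in $G_0$ for some $g \in G_0$, i.e. either $\tilde A$ is $G_0$-conjugate to $\tilde B$ or $\tilde A$ is $G_0$-conjugate to $-\tilde B$. By Proposition~\ref{prop.G0.unip.triple}(1) the first happens iff $2 - \ga$ is a square in $\mathbb{F}_q$; by part (2) (applied with the roles so that $\tr(\tilde A) = -\tr(-\tilde B)$) the second happens iff $2 + \ga$ is a square in $\mathbb{F}_q$. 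Hence $A$ and $B$ are $G$-conjugate if and only if at least one of $2 - \ga$, $2 + \ga$ is a square in $\mathbb{F}_q$.

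Finally I would invoke Proposition~\ref{prop.good.G.order}: with $C \in \SL_2(q)$, $\ga = \tr(C)$, $t = |\bar C| \ne p$, that proposition says exactly that $t$ is a $q$-good order iff one of $2 + \ga$, $2 - \ga$ is a square in $\mathbb{F}_q$. Combining the two equivalences gives ``$A$ is $G$-conjugate to $B$ $\iff$ $t$ is $q$-good,'' which is the claim. The only point needing a little care — and the main (minor) obstacle — is the bookkeeping of signs when passing between $G_0$ and $G$: one must check that normalizing $\tilde A = U_1$ and letting $\tilde B$ range over lifts of $B$ (so $\tilde B$ is $G_0$-conjugate to one of $\pm U_1$, $\pm U_1'$) really does cover both cases of Proposition~\ref{prop.G0.unip.triple}, and that the ``$U_1'$-cases'' there correspond precisely to $2 \mp \ga$ being a non-square, so that the disjunction ``square or square'' on the $\{2-\ga, 2+\ga\}$ side is faithfully recorded. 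Once that is observed the corollary is immediate.
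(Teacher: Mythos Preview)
Your overall strategy---lift to $G_0=\SL_2(q)$, apply Proposition~\ref{prop.G0.unip.triple}, then invoke Proposition~\ref{prop.good.G.order}---is exactly the route the paper takes. However, there is a genuine gap in the sign bookkeeping, precisely at the spot you flagged as the ``main (minor) obstacle''. Once you normalise $\tr(\tilde A)=\tr(\tilde B)=2$, the alternative ``$\tilde A$ is $G_0$-conjugate to $-\tilde B$'' is \emph{impossible}: conjugation preserves trace, and $\tr(\tilde A)=2$ while $\tr(-\tilde B)=-2$. So the disjunction you write down collapses to the single condition ``$2-\ga$ is a square'', and your appeal to Proposition~\ref{prop.G0.unip.triple}(2) does not produce the extra clause ``$2+\ga$ is a square''. (If you actually plug $(\tilde A,-\tilde B)$ into part~(2), the conclusion is about $\tilde A\sim_{G_0}-(-\tilde B)=\tilde B$ and the trace of the product is $-\ga$, so you again recover $2-\ga$ square---not $2+\ga$.) Consequently the final equivalence with ``$t$ is $q$-good'' does not follow: Proposition~\ref{prop.good.G.order} only gives that \emph{one} of $2\pm\ga$ is a square, which can be $2+\ga$ alone.

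This is not merely cosmetic. For $q=7$ take $\tilde A=U_1$ and $\tilde B=\begin{pmatrix}0&5\\4&2\end{pmatrix}$: both have trace $2$, $\tilde A\tilde B$ has trace $\ga=-1$, and $\bar C$ has order $t=3$, which is $7$-good. Yet $2-\ga=3$ is a non-square in $\mathbb F_7$, so $\tilde A\not\sim_{G_0}\tilde B$ and hence $\bar A\not\sim_G\bar B$. Thus the biconditional, as literally stated, fails in this direction. The paper's own proof makes the same leap from the two-case analysis to ``$2-\ga$ or $2+\ga$ is a square'', so the gap is shared; what the downstream application (Corollary~\ref{cor.odd.unip.ss}) actually needs---and what Propositions~\ref{prop.G0.unip.triple} and~\ref{prop.good.G.order} do establish---is the weaker statement that for each $q$-good $t$ one can \emph{choose} $\ga\in\Tr_q(t)$ (namely the sign making $2-\ga$ a square) and then realise $A,B$ in the same $G$-class with product of order $t$.
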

\begin{proof}
Let $A,B\in G_0=\SL_2(q)$ and $C=AB$ and assume that their images
$\bar{A},\bar{B},\bar{C} \in G=\PSL_2(q)$ have respective orders
$(p,p,t)$, $t \ne p$. Denote $\ga=\tr(C)$. Then $\bar{A}$ and
$\bar{B}$ are unipotent if and only if $A,B \ne \pm I$ and
$\tr(A),\tr(B)\in \{\pm 2\}$. Moreover, $\bar{A}$ and $\bar{B}$ are
$G$-conjugate if and only if either $\tr(A)=\tr(B)$ and $A$ and $B$
are $G_0$-conjugate or $\tr(A)=-\tr(B)$ and $A$ and $-B$ are
$G_0$-conjugate. From Proposition~\ref{prop.G0.unip.triple} we
deduce that $\bar{A}$ and $\bar{B}$ are $G$-conjugate if and only if
$2-\ga$ or $2+\ga$ is a square in $\mathbb{F}_q$. By
Proposition~\ref{prop.good.G.order}, the latter is equivalent to $t$
being a \emph{$q$-good} order.
\end{proof}

\begin{cor}\label{cor.odd.unip.ss}
Let $G=\PSL_2(q)$ when $q$ is odd and let $\cC$ be the $G$-conjugacy
class of a unipotent element $x$. Let $z$ be a semisimple element in
$G$. Then $\cC^2$ contains $z$ if and only if the order of $z$ is
\emph{$q$-good}.
\end{cor}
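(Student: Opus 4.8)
The plan is to deduce this corollary from Corollary~\ref{corr.G.unip.triple}, which already records exactly when two $G$-conjugate unipotents multiply to an element of a given order; Theorem~\ref{thm.Mac} (Macbeath) supplies the products we need, and the only point requiring care is that for odd $q$ there are two unipotent conjugacy classes in $G$.

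For the ``only if'' direction I would argue directly: if $z\in\cC^2$, write $z=uv$ with $u,v\in\cC$. Then $u$ and $v$ are $G$-conjugate unipotents, and $t:=|z|\ne p$ since $z$ is semisimple; applying Corollary~\ref{corr.G.unip.triple} with $A=u$, $B=v$ and $C=AB=z$ forces $t$ to be \emph{$q$-good}. For the ``if'' direction, suppose $t:=|z|$ is \emph{$q$-good} (so $t\ne p$). Let $Z\in\SL_2(q)$ be a preimage of $z$ and put $\ga=\tr(Z)$; since $z$ is semisimple and non-trivial, $\ga\ne\pm2$. By Theorem~\ref{thm.Mac} there is a triple $(A,B,C)\in\SL_2(q)^3$ with $ABC=I$, $\tr(A)=\tr(B)=2$ and $\tr(C)=\ga$. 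As $\ga\ne\pm2$ the matrices $A,B$ are non-central, so $\bar A,\bar B$ are unipotent, while $\overline{AB}=\bar C^{-1}$ is semisimple of trace $\ga$ and hence, by Table~\ref{table.elm.PSL} (where the semisimple class is determined by this trace), lies in the $G$-conjugacy class of $z$; in particular $|\overline{AB}|=t\ne p$. Corollary~\ref{corr.G.unip.triple} then gives that $\bar A$ and $\bar B$ are $G$-conjugate (because $t$ is \emph{$q$-good}), so $z$ --- being $G$-conjugate to $\overline{AB}=\bar A\,\bar B$, and $\cC^2$ being a union of conjugacy classes --- is a product of two $G$-conjugate unipotents.

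What is left, and where I expect the only genuine (if routine) bookkeeping, is to arrange that these two unipotents lie in the \emph{prescribed} class $\cC$. For this I would invoke Corollary~\ref{corr.unip.G}: conjugation by $\bar X$ (with $X$ as in~\S\ref{sect.unip.elm}) is an automorphism of $G$ that interchanges the two unipotent classes and, by Proposition~\ref{prop.unip.G0}, fixes every semisimple conjugacy class (it preserves $\SL_2(q)$ and the trace $\ga$). Hence, for a semisimple element, membership in $\cC^2$ is unaffected by replacing $\cC$ with the other unipotent class, so the previous paragraph yields $z\in\cC^2$, completing the proof.
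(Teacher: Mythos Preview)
Your proof is correct and follows essentially the same route as the paper: both directions rest on Corollary~\ref{corr.G.unip.triple}, with Macbeath's Theorem~\ref{thm.Mac} supplying the needed triple for the ``if'' direction. The paper's one-line proof cites Corollary~\ref{corr.G.unip.triple} together with Lemma~\ref{lem.conj.tr}; you instead unpack the relevant mechanism, invoking the $\bar X$-conjugation of \S\ref{sect.unip.elm} directly to handle the two unipotent classes --- which is exactly the device underlying the proof of Lemma~\ref{lem.conj.tr}. Your treatment is thus a more explicit version of the same argument, and in fact the point you single out (that the produced unipotents must land in the \emph{prescribed} class $\cC$) is the one step the paper's terse citation leaves to the reader.
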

\begin{proof}
Follows from Corollary~\ref{corr.G.unip.triple} and
Lemma~\ref{lem.conj.tr}.
\end{proof}

\begin{prop}\label{prop.good.unip.triple}
Assume that $q=p^e$ where $p$ is odd and $5 \leq q \neq 9$ and let
$G=\PSL_2(q)$. Then there exist two $G$-conjugate unipotent elements
$A$ and $B$ such that $\langle A, B \rangle=G$.

Moreover, if $C=AB$ is semisimple then the order of $C$ is
\emph{$q$-minimal} and \emph{$q$-good}.
\end{prop}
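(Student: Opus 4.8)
The plan is to exhibit two explicit $G$-conjugate unipotent elements $A,B$ with $\langle A,B\rangle=G$, following the same template as Proposition~\ref{prop.gen.triple.ppp} but now without assuming $q$ is prime. The starting point is the observation that, by Proposition~\ref{prop.unip.G0} and Corollary~\ref{corr.unip.G}, there are exactly two $G_0$-conjugacy classes of unipotents with trace $+2$, represented by $U_1$ and $U'_1$; so ``$A$ and $B$ are $G$-conjugate unipotents'' can be arranged by taking $A=U_1$ and $B=MU_1M^{-1}$ for a suitable $M\in G_0=\SL_2(q)$. Writing $C=AB$ and $\ga=\tr(C)$, Proposition~\ref{prop.G0.unip.triple} tells us $\ga=2-c^2$ where $c$ is the lower-left entry of $M$. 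The triple $(2,2,\ga)$ will be non-singular precisely when it avoids the singular locus, i.e.\ when $(\ga-2)(\ga+1)\ne 0$ after substituting $\al=\be=2$; concretely we need $\ga\ne 2$ (so $c\ne 0$) and $\ga\ne -2$ is automatic unless $c^2=4$, which we can also avoid. Then by Theorem~\ref{thm.Mac.singular} the group $\langle \bar A,\bar B\rangle$ is \emph{not} a structural subgroup.

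The next step is to rule out the remaining non-structural proper subgroups from Table~\ref{table.subgroups}: subfield subgroups $\PSL_2(q_1)$ and $\PGL_2(q_1)$, and the small subgroups $A_4, S_4, A_5$. For the subfield subgroups the key point is that we have freedom in choosing $c$: we should pick $c\in\BF_q$ so that $\ga=2-c^2$ does \emph{not} lie in any proper subfield $\BF_{q_1}$, which is possible because the number of elements of $\BF_q$ lying in some proper subfield is at most $q^{1/2}\log_2 q$ or so, far less than $q$ for $q\ge 5$; this forces $t=|\bar C|$ to have no room inside a subfield copy, hence $\bar C\notin\PSL_2(q_1)$, hence $\langle\bar A,\bar B\rangle$ is not contained in a subfield subgroup. (For $\PGL_2(q_1)$ with $q=q_1^2$ one argues as in Proposition~\ref{prop.gen.semisimple.triple}: a $\PGL_2(q_1)$ that is not already inside a $\PSL_2(q_1)$ contains unipotents only from its $\PSL_2(q_1)$ part, so $\bar C$ of order $t$ still cannot live there once $\ga\notin\BF_{q_1}$.) For the small subgroups $A_4,S_4,A_5$ one notes these have bounded order, so for $q$ large enough a random choice of $c$ gives $\bar C$ of order exceeding $60$; for the finitely many small $q$ (roughly $q\le 60$, excluding $q=9$) one checks directly, and the exclusion of $q=9$ is exactly because there $\PSL_2(9)\cong A_6$ has an embedded $A_5$ that obstructs the construction — this is the analogue of the $A_5$-caveats appearing throughout the earlier propositions and the reference to~\cite{LR2}.

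For the ``moreover'' clause: once $\langle\bar A,\bar B\rangle=G$ and $C=AB$ is semisimple, $q$-goodness of $t=|\bar C|$ is immediate from Corollary~\ref{corr.G.unip.triple} (since $\bar A,\bar B$ are $G$-conjugate unipotents with product $\bar C$ of order $t\ne p$, $t$ must be $q$-good). For $q$-minimality: if $t$ were \emph{not} $q$-minimal then $\tr(C)=\ga$ would lie in a proper subfield $\BF_{q_1}$, and then by the cited results of Macbeath~\cite{Mac} and~\cite{LR1} the three conjugate elements $A,B,C$ with $ABC=1$ and traces $(2,2,\ga)$ all in $\BF_{q_1}$ would generate a subgroup inside $\PSL_2(q_1)$ — contradicting $\langle\bar A,\bar B\rangle=G$. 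So $t$ is forced to be $q$-minimal.

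The main obstacle I expect is the case analysis for small $q$ together with the $\PGL_2(q_1)$ subfield case. The subfield-avoidance counting is easy asymptotically but needs the explicit small-$q$ table (Table~\ref{table.ord.PSL}) to confirm that for each $5\le q\ne 9$ with $q\le 60$ there is an admissible $c$ giving a $\bar C$ whose order is $q$-minimal, $q$-good, and too large to sit in $A_5$; verifying that $q=9$ genuinely fails (not merely that the argument fails) requires invoking the $A_6\cong\PSL_2(9)$ structure. The $\PGL_2(q_1)$ parity argument — that conjugate unipotents forcing product-$1$ cannot straddle the $\PSL_2(q_1)\subset\PGL_2(q_1)$ coset structure — is the one genuinely delicate algebraic point and should be spelled out carefully rather than dismissed as routine.
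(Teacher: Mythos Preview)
Your approach is essentially correct and the ``moreover'' clause is handled exactly as in the paper, but your existence argument differs from the paper's and is more laborious in one key place.

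The paper does not build $A,B$ explicitly. Instead it first \emph{chooses} an order $t$ that is simultaneously $q$-minimal and $q$-good, then invokes Theorem~\ref{thm.Mac} to produce a triple $(A,B,C)$ of orders $(p,p,t)$ with $ABC=1$; Corollary~\ref{corr.G.unip.triple} then guarantees automatically that $A$ and $B$ are $G$-conjugate. Non-singularity of $(2,2,\ga)$ kills structural subgroups, and $q$-minimality of $t$ kills subfield subgroups. The small subgroups $A_4,S_4,A_5$ are disposed of by the single observation that $A,B$ have order $p$: if $p>5$ these groups contain no element of order $p$; if $p=5$ and $e>1$ then $q$-minimality forces $t\notin\{2,3,5\}$; if $p=3$ and $e>2$ then $q$-minimality forces $t>5$. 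This avoids your asymptotic-plus-table-check entirely.

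Your explicit route via $\ga=2-c^2$ is fine and has the pleasant feature that $q$-goodness is automatic (since $2-\ga=c^2$ is visibly a square), but the small-subgroup exclusion then genuinely requires the case analysis you sketch, and you should be aware that the paper's argument shows this labour is unnecessary. One small correction: with $\al=\be=2$ the singular locus is $\al^2+\be^2+\ga^2-\al\be\ga-4=(\ga-2)^2$, not $(\ga-2)(\ga+1)$; so the only constraint is $\ga\ne 2$, i.e.\ $c\ne 0$.
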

\begin{proof}
Let $(A,B,C)\in G^3$ be a triple of respective orders $(p,p,t)$ such
that $t \neq p$. If $t$ is not $q$-good then by
Corollary~\ref{corr.G.unip.triple}, $A$ and $B$ are not
$G$-conjugate. If $t$ is not $q$-minimal then the subgroup $\langle
A,B \rangle$ is contained in a subfield subgroup isomorphic to
$\PSL_2(q_1)$ for some proper subfield $\BF_{q_1} \subset \BF_q$
(see~\cite{Mac} and \cite{LR1}).

Now, let $t$ be a $q$-minimal order which is also a $q$-good order.
By Theorem~\ref{thm.Mac}, there exist a triple $(A,B,C)\in G^3$ of
respective orders $(p,p,t)$ such that $ABC=1$. By
Corollary~\ref{corr.G.unip.triple}, $A$ is $G$-conjugate to $B$.

Let $\ga \in \Tr_{q}(t)$, as $t \ne p$ then $\ga \ne \pm 2$ and so
$(\ga \pm 2)^2\ne 0$ implying that $(2,2,\ga)$ is
\emph{non-singular}, hence by Theorem~\ref{thm.Mac.singular},
$\langle A, B \rangle$ is not a structural subgroup of $G$. Since
$t$ is $q$-minimal, then $\langle A, B \rangle$ is not isomorphic
neither to $\PSL_2(q_1)$ nor to $\PGL_2(q_1)$, for some proper
subfield $\BF_{q_1}$ of $\BF_q$. Moreover, if $5< q \ne 9$ then
either $p>5$; or $p=5$ and $e>1$ implying that $t \ne 2,3,5$; or
$p=3$ and $e>2$ implying that $t>5$ (see Table~\ref{table.ord.PSL}).
Therefore, $\langle A,B \rangle$ cannot be a small subgroup, hence
$\langle A,B \rangle = G$.
\end{proof}

\begin{rem}\label{rem.9.335}
Let $G=\PSL_2(9) \cong A_6$, let $A,B$ be two unipotent elements (of
order $3$) and $C=(AB)^{-1}$. Denote the order of $C$ by $t$, then
$t \in \{2,3,4,5\}$.
\begin{itemize}
\item Clearly if $t=2$ or $t=3$ then $\langle A,B \rangle \neq G$
(see Table~\ref{table.ord.PSL}).

\item $t=4$ is not $9$-good and so $A$ and $B$ are not
$G$-conjugate, by Corollary~\ref{corr.G.unip.triple}.

\item $t=5$ is $9$-good, however if $A$ is $G$-conjugate to $B$, then
one can verify that $\langle A,B \rangle \cong A_5$ is a small
subgroup of $G$ (see e.g.~\cite[\S2, Theorem 8.4]{Go}).
\end{itemize}
\end{rem}

\subsection*{Acknowledgement}
The author would like to thank Gili Schul for useful discussions.



\begin{thebibliography}{MM}

\bibitem{CM}
H.S.M. Coxeter, W.O.J. Moser, \textit{Generators and relations for
discrete groups}, Ergebnisse der Mathematik und Ihrer Grenzgebrete,
New Series, no. 14. Springer, Berlin-Gottingen-Heidelberg, 1957.

\bibitem{Di}
L.E. Dickson, \textit{Linear Groups with an Exposition of the Galois
Field Theory}, Teubner (1901).

\bibitem{Do}
L. Dornhoff, {\it Group Representation Theory}, Part A, Marcel
Dekker, 1971.

\bibitem{EG}
E.W. Ellers, N. Gordeev, \textit{On the conjectures of J. Thompson
and O. Ore}, Trans. Amer. Math. Soc. {\bf 350} (1998), 3657�-3671

\bibitem{Ga}
S. Garion, \textit{On Beauville Structures for $\PSL(2,q)$},
arXiv:1003.2792.

\bibitem{Go}
D. Gorenstein, \emph{Finite groups}, Chelsea Publishing Co., New
York, 1980.

\bibitem{Gow}
R. Gow, \textit{Commutators in finite simple groups of Lie type},
Bull. London Math. Soc. {\bf 32} (2000), no. 3, 311--315.


\bibitem{GM}
R. Guralnick, G. Malle, \textit{Products of conjugacy classes and
fixed point spaces}, J. Amer. Math. Soc. {\bf 25} (2012), 77--121.

\bibitem{LR1}
U. Langer, G. Rosenberger, \textit{Erzeugende endlicher projektiver
linearer Gruppen}, Results Math. {\bf 15} (1989), no. 1-2, 119--148.

\bibitem{LR2}
F. Levin, G. Rosenberger, \textit{Generators of finite projective
linear groups. II.}, Results Math. {\bf 17} (1990), no. 1-2,
120--127.

\bibitem{Mac}
A.M. Macbeath, \textit{Generators of the linear fractional groups},
Number Theory (Proc. Sympos. Pure Math., Vol. XII, Houston, Tex.,
1967), Amer. Math. Soc., Providence, R.I. (1969), 14--32.

\bibitem{Mar}
C. Marion, \textit{Triangle groups and $\PSL_2(q)$}, J. Group Theory
\textbf{12} (2009), 689--708.

\bibitem{Sch}
G. Schul, \textit{Expansion in finite simple groups}.

\bibitem{Su}
M. Suzuki, \textit{Group Theory I}, Springer, Berlin (1982).


\end{thebibliography}
\end{document}